\let\O\undefined
\DeclareMathOperator{\O}{O}
\DeclareMathOperator{\tr}{tr}
\DeclareMathOperator{\Diag}{diag}
\DeclareMathOperator{\Rank}{rank}
\DeclareMathOperator{\V}{V}
\DeclareMathOperator{\Vol}{Vol}
\DeclareMathOperator{\im}{im}
\DeclareMathOperator{\Gr}{Gr}
\DeclareMathOperator{\Graff}{Graff}
\DeclareMathOperator{\Flag}{Flag}
\theoremstyle{plain}
\newtheorem{theorem}{Theorem}%[section]
\newtheorem{proposition}[theorem]{Proposition}
\newtheorem{lemma}[theorem]{Lemma}
\newtheorem{corollary}[theorem]{Corollary}
\theoremstyle{definition}
\newtheorem{definition}[theorem]{Definition}
\theoremstyle{remark}
\begin{document}

\title[Distances between subspaces of different dimensions]{Schubert varieties and distances between subspaces of different dimensions}
\author[K.~Ye]{Ke~Ye}
\address{Department of Mathematics, University of Chicago, Chicago, IL 60637.}
\email{kye@math.uchicago.edu}
\author[L.-H.~Lim]{Lek-Heng~Lim}
\address{Computational and Applied Mathematics Initiative, Department of Statistics,
University of Chicago, Chicago, IL 60637.}
\email[corresponding author]{lekheng@galton.uchicago.edu}

\begin{abstract}
We resolve a basic problem on subspace distances that often arises in applications: How can the usual Grassmann distance between equidimensional subspaces be extended to subspaces of different dimensions? We show that a natural solution is given by the distance of a point to a Schubert variety within the Grassmannian. This distance reduces to the Grassmann distance when the subspaces are equidimensional and  does not depend on any embedding into a larger ambient space. Furthermore, it has a concrete expression involving principal angles, and is efficiently computable in numerically stable ways. Our results are largely independent of the Grassmann distance --- if desired, it may be substituted by any other common distances between subspaces. Our approach depends on a concrete algebraic geometric view of the Grassmannian that parallels the differential geometric perspective that is well-established in applied and computational mathematics.
\end{abstract}

\keywords{distances between inequidimensional subspaces, Grassmannian, Schubert variety, flag variety, probability densities on Grassmannian}

\subjclass[2010]{14M15, 15A18, 14N20, 51K99}

\maketitle

%%%%
\section{Introduction}\label{sec:motivation}

%%%%
%\subsection{Motivations}
Biological data (e.g.\ gene expression levels, metabolomic profile), image data (e.g.\  \textsc{mri} tractographs, movie clips), text data (e.g.\ blogs, tweets), etc., often come in the form of  a set of feature vectors $a_1,\dots,a_m \in \mathbb{R}^d$ and can be conveniently represented by a matrix $A \in \mathbb{R}^{m \times d}$ (e.g.\ gene-microarray matrices  of gene expression levels, frame-pixel matrices of grey scale values, term-document matrices of term frequencies-inverse document frequencies). In modern applications, it is often the case that one will encounter an exceedingly large sample size $m$ (massive) or  an exceedingly large number of variables $d$ (high-dimensional) or both.

The raw data $A$ is usually less interesting and informative than the spaces it defines, e.g.\ its row and column spaces or its principal subspaces. Moreoever, it often happens that  $A$ can be  well-approximated by a subspace $\mathbf{A} \in \Gr(k,n)$ where $k \ll m$ and $n \ll d$. The process of getting from $A$ to $\mathbf{A}$ is well-studied, e.g.\ randomly sample a subset of representative landmarks or compute principal components. 

Subspace-valued data appears in a wide range of applications: computer vision \cite{MYDF, VidMaSas2005}, bioinformatics \cite{HH}, machine learning \cite{HRS, LZ}, communication \cite{LHS,ZT}, coding theory \cite{AC,BN,CHS,DHST}, statistical classification \cite{HamLee2008}, and system identification \cite{MYDF}. In computational mathematics, subspaces arise in the form of Krylov subspaces \cite{LS} and their variants \cite{C}, as subspaces of structured matrices (e.g.\ Toeplitz, Hankel, banded), and in recent developments such as compressive sensing (e.g.\ Grassmannian dictionaries \cite{SH}, online matrix completion \cite{BNR}).

One of the most basic problems with subspaces is to define a notion of separation between them. The solution is well-known for subspaces of the same dimension $k$ in $\mathbb{R}^n$. These are points on the Grassmannian $\Gr(k,n)$, a Riemannian manifold, and the geodesic distance between them gives us an intrinsic distance. The \emph{Grassmann distance} is independent of the choice of coordinates and can be
readily related to principal angles and thus computed via the singular value decomposition (\textsc{svd}): For subspaces $\mathbf{A}, \mathbf{B} \in \Gr(k,n)$, form matrices $A, B \in \mathbb{R}^{n \times k}$ whose columns are their respective orthonormal bases, then
\begin{equation}\label{eq:grassdist}
d(\mathbf{A},\mathbf{B}) = \Bigl(\sum\nolimits_{i=1}^k \theta_i^2\Bigr)^{1/2},
\end{equation}
where $\theta_i = \cos^{-1} \bigl(\sigma_i(A^\mathsf{T} B)\bigr)$ is the $i$th principal angle between $\mathbf{A}$ and $\mathbf{B}$. This is the geodesic distance on the Grassmannian viewed as a  Riemannian manifold. There are many other common distances defined on Grassmannians ---  Asimov, Binet--Cauchy, chordal, Fubini--Study, Martin, Procrustes, projection, spectral (see Table~\ref{tab:distances}).

What if the subspaces are of different dimensions? In fact, if one examines the aforementioned applications, one invariably finds  that the most general settings for each of them  would fall under this situation. The restriction to equidimensional subspaces thus somewhat limits the utility of these applications. For example, the principal subspaces of two matrices $A$ and $B$ for a given noise level would typically be of different dimensions, since there is no reason to expect the number of singular values of $A$ above a given threshold to be the same as that of $B$.

As such one may also find many applications that involve distances between subspaces of different dimensions: numerical linear algebra \cite{BES, SS}, information retrieval \cite{ref2a, text}, facial recognition \cite{ref2b, face}, image classification \cite{ref2a, ref2b}, motion segmentation \cite{motion, ref2c, ref2d}, \textsc{eeg} signal analysis \cite{eeg}, mechanical engineering \cite{mech}, economics \cite{econ}, network analysis \cite{network}, blog spam detection \cite{blog}, and decoding colored barcodes \cite{bar}.

These applications are all based on two existing proposals for a distance between subspaces of different dimensions: The \emph{containment gap} \cite[pp.~197--199]{Kato} and the \emph{symmetric directional distance} \cite{SWF, WWF}. They are however somewhat ad hoc and bear little relation to the natural geometry of subspaces. Also, it is not clear what they are suppose to measure and neither restricts to the Grassmann distance when the subspaces are of the same dimension.  Our main objective is to show that there is an alternative definition that does generalize the Grassmann distance but our work will also shed light on these two distances.

%%%%
\subsection{Main Contributions} Our main result (see Theorem~\ref{thm1}) can be stated in simple linear algebraic terms: Given any two subspaces in $\mathbb{R}^n$, $\mathbf{A}$ of dimension $k$ and $\mathbf{B}$ of dimension $l$, assuming $k< l$ without loss of generality, the distance from $\mathbf{A}$ to the nearest $k$-dimensional subspace contained in $\mathbf{B}$ \emph{equals} the distance from $\mathbf{B}$ to the nearest $l$-dimensional subspace that contains $\mathbf{A}$. Their common value gives the distance between $\mathbf{A}$ and $\mathbf{B}$. Taking an algebraic geometric point-of-view:
\begin{enumerate}[\upshape ($\ast$)]
\item \label{quote}
\emph{The distance between subspaces of different dimensions is the distance between a point and a certain Schubert variety within the Grassmannian.}
\end{enumerate}
This distance has the following properties, established in Section~\ref{sec:main}:
\begin{enumerate}[\upshape (a)]
\item readily computable via \textsc{svd};
\item restricts to the usual Grassmann distance \eqref{eq:grassdist} for subspaces of the same dimension;
\item independent of the choice of local coordinates;
\item independent of the dimension of the ambient space (i.e., $n$);
\item\label{other} may be defined in conjunction with other common distances in Table~\ref{tab:distances}.
\end{enumerate}
We will see in Section~\ref{sec:existing} that the two existing notions of distance between subspaces of different dimensions are special cases of \eqref{other}.

Evidently, the word `distance' in ($\ast$) is used in the sense of a distance of a point to a set. For example, if a subspace is contained in another, then the distance between them is zero, even if they are distinct subspaces. Thus the distance in ($\ast$) is not a metric\footnote{We will see in Section~\ref{sec:dinfty} that this could be attributed to the fact that $\boldsymbol{\mathsf{Met}}$, the category of metric spaces and continuous contractions, does not admit coproduct.}. In Section~\ref{sec:metric}, we define a metric on the set of subspaces of all dimensions using an analogue of our main result: Given any two subspaces in $\mathbb{R}^n$, $\mathbf{A}$ of dimension $k$ and $\mathbf{B}$ of dimension $l$ with $k< l$, the distance from $\mathbf{A}$ to the furthest $k$-dimensional subspace contained in $\mathbf{B}$ \emph{equals} the distance from $\mathbf{B}$ to the furthest $l$-dimensional subspace that contains $\mathbf{A}$. Their common value gives a metric between $\mathbf{A}$ and $\mathbf{B}$. The most interesting metrics for subspaces of different dimensions can be found in Table~\ref{tab:metrics}.

In Section~\ref{sec:volume}, we obtain a volumetric analogue of our main result: Given two arbitrary subspaces in $\mathbb{R}^n$, $\mathbf{A}$ of dimension $k$ and $\mathbf{B}$ of dimension $l$ with $k < l$, we show that the probability a random $l$-dimensional subspace contains  $\mathbf{A}$ \emph{equals} the probability a random $k$-dimensional subspace is contained in $\mathbf{B}$.

The far-reaching work \cite{EAS} popularized the basic \emph{differential geometry} of Stiefel and Grassmannian manifolds by casting the discussions concretely in terms of matrices. Subsequent works, notably \cite{AMSbook, AMS, AMSV}, have further enriched this concrete matrix-based approach. A secondary objective of our article is to do the same for the basic \emph{algebraic geometry} of Grassmannians. In particular, we introduce some of the objects in Table~\ref{tab:objects} to an applied and computational mathematics readership. The proofs of our main results essentially use only the \textsc{svd}. Everything else is explained within the article and accessible to anyone willing to accept a small handful of unfamiliar terminologies and facts on faith.
\begin{table*}[ht]
\caption{Grassmannian and friends}
\label{tab:objects}
\vspace*{-1.5ex}
\renewcommand{\arraystretch}{1.75}\small
\begin{tabular}{p{0.29\textwidth}p{0.18\textwidth}p{0.43\textwidth}p{0.7\textwidth}}
\emph{Grassmannian} & $\Gr(k,n)$ & models $k$-dimensional subspaces in $\mathbb{R}^n$ & \S\ref{sec:Grass} \\
\emph{Infinite  Grassmannian} & $\Gr(k,\infty)$ & models $k$-dimensional subspaces regardless of ambient space & \S\ref{sec:infty} \\
\emph{Doubly-infinite Grassmannian} & $\Gr(\infty,\infty)$ & models subspaces of all dimensions regardless of ambient space & \S\ref{sec:dinfty} \\
\emph{Flag variety} & $\Flag(k_1,\dots,k_m,n)$ & models nested sequences of subspaces in $\mathbb{R}^n$; $\Flag(k,n) = \Gr(k,n)$ & \S\ref{sec:schubert} \\
\emph{Schubert variety} & $\Omega(\mathbf{X}_1,\dots,\mathbf{X}_m,n)$ & `linearly constrained' subset of $\Gr(k,n)$& \S\ref{sec:schubert}%, \S\ref{sec:matrix}
\end{tabular}
\end{table*}

\section{Grassmannian of linear subspaces}\label{sec:Grass}

We will selectively review some basic properties of the Grassmannian. The differential geometric perspectives are drawn from \cite{Husemoller, MS}, the more concrete matrix-theoretic view from \cite{AMS, EAS, Wong}, and the computational aspects from \cite{GVL}.

We fix the ambient space $\mathbb{R}^n$. A \emph{$k$-plane}  is a $k$-dimensional subspace of $\mathbb{R}^n$. A \emph{$k$-frame} is an ordered orthonormal basis of a $k$-plane, regarded as an $n \times k$ matrix whose columns $a_1,\dots, a_k$ are the orthonormal basis vectors. A \emph{flag} is a strictly increasing sequence of nested subspaces, $\mathbf{X}_0\subset \mathbf{X}_1\subset \cdots \subset \mathbf{X}_m\subset  \mathbb{R}^n$; it is \emph{complete} if $m=n$.

We write $\Gr(k,n)$ for the \emph{Grassmannian} of $k$-planes in $\mathbb{R}^n$, $\V(k,n)$ for the \emph{Stiefel manifold} of orthonormal $k$-frames, and $\O(n) \coloneqq \V(n,n)$ for the \emph{orthogonal group} of $n \times n$ orthogonal matrices. $\V(k,n)$ may be regarded as a homogeneous space,
\[
\V(k,n) \cong \O(n)/\O(n-k),
\]
or more concretely as the set of $n \times k$ matrices with orthonormal columns.

There is a \emph{right action} of $\O(k)$ on $\V(k,n)$: For $Q\in \O(k)$ and $A \in \V(k,n)$, the action  yields $AQ \in \V(k,n)$ and the resulting homogeneous space is $\Gr(k,n)$, i.e.,
\begin{equation}\label{eq:homo}
\Gr(k,n) \cong \V(k,n)/\O(k) \cong \O(n)/\bigl(\O(n-k) \times \O(k)\bigr).
\end{equation}
In this picture, a subspace $\mathbf{A} \in \Gr(k,n)$ is identified with an equivalence class comprising all its $k$-frames $\{ AQ \in \V(k,n): Q \in \O(k)\}$. Note that $\operatorname{span}(AQ) = \operatorname{span}(A)$ for all $Q \in \O(k)$.

There is a \emph{left action} of $\O(n)$ on $\Gr(k,n)$: For any $Q \in \O(n)$ and $\mathbf{A} = \operatorname{span}(A) \in \Gr(k,n)$ where $A$ is a $k$-frame of $\mathbf{A}$, the action yields
\begin{equation}\label{eq:action}
Q \cdot \mathbf{A}\coloneqq \operatorname{span}(QA) \in \Gr(k,n).
\end{equation}
This action is transitive as any $k$-plane can be rotated onto any other $k$-plane by some $Q \in \O(n)$. A $k$-plane  $\mathbf{A} \in \Gr(k,n)$ will be denoted in boldface; the corresponding italized letter $A =[a_1,\dots,a_k] \in \V(k,n)$ will denote a $k$-frame of $\mathbf{A}$.

$\Gr(k,n)$ and $\V(k,n)$ are smooth manifolds of dimensions  $k(n-k)$ and $nk-k(k+1)/2$ respectively.  As a set of $n\times k$ matrices, $\V(k,n)$ is a submanifold of $\mathbb{R}^{n\times k}$ and inherits a Riemannian metric  from the Euclidean metric on $\mathbb{R}^{n\times k}$, i.e., given $A=[a_1,\dots, a_k]$ and $B=[b_1,\dots, b_k]$ in $T_X \V(k,n)$, the tangent space at $X \in \V(k,n)$, the Riemannian metric $g$ is defined by $g_X(A,B)=\sum_{i=1}^k a_i^\mathsf{T} b_i= \tr (A^\mathsf{T}B)$. As $g$ is invariant under the action of $\O(k)$, it descends to a Riemannian metric on $\Gr(k,n)$ and in turn induces a geodesic distance  on $\Gr(k,n)$ which we define below.

Let $\mathbf{A} \in \Gr(k,n)$ and $\mathbf{B} \in \Gr(l,n)$ respectively. Let $r \coloneqq \min (k,l)$. The $i$th \emph{principal vectors} $(p_{i},q_{i})$, $i=1,\dots,r$, are defined recursively as solutions to the optimization problem%
\[%
\begin{tabular}
[c]{rcl}%
maximize &  & $p^{\mathsf{T}}q$\\
subject to 
&  & $p\in \mathbf{A},\; p^{\mathsf{T}}p_{1}=\dots=p^{\mathsf{T}}p_{i-1}=0,\;\lVert p\rVert=1,$\\
&  & $q\in \mathbf{B}, \; q^{\mathsf{T}}q_{1}=\dots=q^{\mathsf{T}}q_{i-1}=0,\;\lVert q\rVert=1,$%
\end{tabular}
\]
for $i=1,\dots,r$. The \emph{principal angles} are then defined by%
\[
\cos\theta_{i}=p_{i}^\mathsf{T}q_{i}, \quad i = 1,\dots,r.
\]
Clearly $0\le \theta_{1}\leq\dots\leq\theta_r \le \pi/2$. We will let $\theta_i(\mathbf{A},\mathbf{B})$ denote the $i$th principal angle between $\mathbf{A} \in \Gr(k,n)$ and $\mathbf{B} \in \Gr(l,n)$. 

Principal vectors and principal angles may be readily computed using \textsc{qr} and \textsc{svd} \cite{BG, GVL}. Let $A = [a_1,\dots,a_k]$ and $B =[ b_1,\dots, b_l]$ be orthonormal bases and let
\begin{equation}\label{eq:svd}
A^\mathsf{T} B = U\Sigma V^\mathsf{T}% = U_1 \Sigma_1 V_1^\mathsf{T}
\end{equation}
be the full \textsc{svd} of $A^\mathsf{T} B$, i.e., $U \in \O(k)$, $V \in \O(l)$, $\Sigma = \left[\begin{smallmatrix}\Sigma_1 & 0 \\ 0 & 0 \end{smallmatrix}\right]\in \mathbb{R}^{k \times l}$ with $\Sigma_1 = \Diag(\sigma_1,\dots,\sigma_r) \in \mathbb{R}^{r \times r}$ where $\sigma_1 \ge \dots \ge \sigma_{r} \ge 0$.

The principal angles $\theta_{1}\leq\dots\leq\theta_{r}$ are given by
\begin{equation}\label{eq:angles}
\theta_i = \cos^{-1} \sigma_i, \quad i = 1,\dots,r.
\end{equation}
It is customary to write $A^\mathsf{T} B = U (\cos\Theta) V^\mathsf{T}$, where  $\Theta = \Diag (\theta_1,\dots,\theta_{r},1,\dots,1) \in \mathbb{R}^{k \times l}$ and $\Theta_1 = \Diag (\theta_1,\dots,\theta_{r}) \in \mathbb{R}^{r \times r}$. Consider the column vectors,
\[
AU = [p_1,\dots,p_k], \quad BV = [q_1,\dots,q_l].
\]
The principal vectors are given by $(p_1,q_1),\dots,(p_r, q_r)$. Strictly speaking, principal vectors come in pairs but we will also call the vectors $p_{r+1},\dots,p_k$ (if $r = l < k$) or $q_{r+1},\dots,q_l$ (if $r = k < l$) principal vectors for lack of a better term.

We will use the following fact from \cite[Theorem~6.4.2]{GVL}.
\begin{proposition}\label{prop:angles}
Let $r = \min (k,l)$ and $\theta_1,\dots,\theta_{r}$ and $(p_1,q_1),\dots,(p_{r}, q_{r})$ be the principal angles and principal vectors between $\mathbf{A} \in \Gr(k,n)$ and $\mathbf{B} \in \Gr(l,n)$ respectively. If $m < r$ is such that
$1 = \cos \theta_1 = \dots = \cos \theta_m > \cos \theta_{m+1}$,
then
\[
\mathbf{A} \cap \mathbf{B} = \operatorname{span} \{p_1,\dots, p_m \} =  \operatorname{span} \{q_1,\dots, q_m \}.
\]
\end{proposition}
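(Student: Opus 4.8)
The plan is to work entirely with the orthonormal principal vectors produced by the \textsc{svd} in \eqref{eq:svd} and to exploit their biorthogonality. Writing $u_1,\dots,u_k$ and $v_1,\dots,v_l$ for the columns of $U$ and $V$, I first record that $AU = [p_1,\dots,p_k]$ and $BV = [q_1,\dots,q_l]$ are orthonormal bases of $\mathbf A$ and $\mathbf B$ (being $A$ and $B$ post-multiplied by orthogonal matrices), and that
\[
(AU)^\mathsf{T}(BV) = U^\mathsf{T}(A^\mathsf{T}B)V = \Sigma,
\]
so that $p_i^\mathsf{T} q_j$ equals $\sigma_i$ when $i = j \le r$ and vanishes otherwise. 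This single identity is the engine of the whole argument.

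For the forward inclusion I would use the hypothesis $\sigma_i = \cos\theta_i = 1$ for $i = 1,\dots,m$. Since $p_i$ and $q_i$ are unit vectors with $p_i^\mathsf{T} q_i = 1$, the identity $\lVert p_i - q_i\rVert^2 = 2 - 2p_i^\mathsf{T} q_i = 0$ forces $p_i = q_i$. As $p_i \in \mathbf A$ and $q_i \in \mathbf B$, each such vector lies in $\mathbf A \cap \mathbf B$, giving at once
\[
\operatorname{span}\{p_1,\dots,p_m\} = \operatorname{span}\{q_1,\dots,q_m\} \subseteq \mathbf A \cap \mathbf B.
\]

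The reverse inclusion is the substantive step. Given $x \in \mathbf A \cap \mathbf B$, I expand it in the orthonormal basis of $\mathbf A$ as $x = \sum_{i=1}^k \alpha_i p_i$, so $\lVert x \rVert^2 = \sum_{i=1}^k \alpha_i^2$. Let $P$ denote orthogonal projection onto $\mathbf B$; using the biorthogonality identity one computes $Pp_i = \sum_{j=1}^l (q_j^\mathsf{T} p_i)q_j = \sigma_i q_i$ for $i \le r$ and $Pp_i = 0$ for $i > r$, whence $\lVert Px\rVert^2 = \sum_{i=1}^r \sigma_i^2 \alpha_i^2$. Because $x \in \mathbf B$ we have $Px = x$, and therefore
\[
\sum_{i=1}^r \sigma_i^2 \alpha_i^2 = \sum_{i=1}^k \alpha_i^2.
\]
Since every $\sigma_i \le 1$, the left side is at most $\sum_{i=1}^r \alpha_i^2 \le \sum_{i=1}^k \alpha_i^2$, so both inequalities must be equalities: the first forces $\alpha_i = 0$ whenever $\sigma_i < 1$, i.e.\ for $i > m$, and the second forces $\alpha_i = 0$ for $i > r$. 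Hence $x = \sum_{i=1}^m \alpha_i p_i \in \operatorname{span}\{p_1,\dots,p_m\}$, which closes the loop.

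I expect the only real obstacle to be the bookkeeping in the inequality chain when $k \neq l$, where the ``extra'' principal vectors $p_{r+1},\dots$ (or $q_{r+1},\dots$) carry no partner; the explicit computation $Pp_i = 0$ for $i > r$ disposes of this cleanly, and no input deeper than the \textsc{svd} and Cauchy--Schwarz is needed.
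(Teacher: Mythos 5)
Your proof is correct, but there is nothing in the paper to compare it against: the paper does not prove this proposition at all, stating it as a known fact imported from Golub and Van Loan (Theorem~6.4.2 of \emph{Matrix Computations}). What you have written is therefore a self-contained derivation of something the paper takes on citation, and it is the standard one: the identity $(AU)^\mathsf{T}(BV) = U^\mathsf{T}(A^\mathsf{T}B)V = \Sigma$ gives biorthogonality of the two principal bases; the rigidity computation $\lVert p_i - q_i\rVert^2 = 2 - 2\sigma_i = 0$ converts $\sigma_i = 1$ into $p_i = q_i$, settling the inclusion $\operatorname{span}\{p_1,\dots,p_m\} \subseteq \mathbf{A}\cap\mathbf{B}$; and comparing $\lVert x\rVert^2 = \sum_{i=1}^k \alpha_i^2$ with $\lVert Px\rVert^2 = \sum_{i=1}^r \sigma_i^2\alpha_i^2$, where $Px = x$ for $x \in \mathbf{B}$, kills every coefficient $\alpha_i$ with $\sigma_i < 1$ or with $i > r$, settling the reverse inclusion. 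Two small points of hygiene: the bound $\sigma_i \le 1$, on which your equality argument pivots, deserves its one-line justification (the singular values of $A^\mathsf{T}B$ are at most $1$ because $A$ and $B$ have orthonormal columns, so $A^\mathsf{T}$ and $B$ are spectral-norm contractions; the paper assumes this implicitly when writing $\theta_i = \cos^{-1}\sigma_i$); and you silently use the decreasing ordering $\sigma_1 \ge \dots \ge \sigma_r$ to translate ``$\sigma_i < 1$'' into ``$i > m$,'' which is fine but worth saying. The closing appeal to Cauchy--Schwarz is not actually needed anywhere.
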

If $k = l$, the geodesic distance between $\mathbf{A}$ and $\mathbf{B}$ in $\Gr(k,n)$ is called the \emph{Grassmann distance} and is given by
\begin{equation}\label{eq:grassdist1}
d_{\Gr(k,n)}(\mathbf{A},\mathbf{B})=\Bigl(\sum\nolimits_{i=1}^k \theta_i^2\Bigr)^{1/2} = \lVert\cos^{-1}\Sigma\rVert_{F}.
\end{equation}
An explicit expression for the geodesic \cite{AMS}  connecting $\mathbf{A}$ to $\mathbf{B}$ on $\Gr(k,n)$ that minimizes the Grassmann distance is given by $\gamma : [0,1] \to \Gr(k,n)$,
\begin{equation}\label{eq:geodesic}
\gamma(t) = \operatorname{span}( AU\cos t\Theta+ Q\sin t \Theta )
\end{equation}
where $M= Q(\tan\Theta) U^{\mathsf{T}}$ is  a condensed \textsc{svd}  of the matrix
\[
M \coloneqq (I - AA^\mathsf{T})B(A^\mathsf{T} B)^{-1} \in \mathbb{R}^{n \times k}
\]
and where $U \in \O(k)$ and $\Theta=\Diag(\theta_1,\dots,\theta_k)\in \mathbb{R}^{k\times k}$ are as in \eqref{eq:svd} and \eqref{eq:angles}. Note that if $\cos\Theta=\Sigma$, then $\tan\Theta=(\Sigma^{-2}-I)^{1/2}$.  Also, $\gamma(0) =\mathbf{A}$ and $\gamma(1) = \mathbf{B}$.

Aside from the Grassmann distance, there are many well-known distances between subspaces \cite{BN,DD,DHST,EAS,HamLee2008}. We present some of these in Table~\ref{tab:distances}.
\begin{table*}[ht]
\caption{Distances on $\Gr(k,n)$ in terms of principal angles and orthonormal bases.}
\label{tab:distances}
\centering
\vspace*{-1.5ex}
\renewcommand{\arraystretch}{1.75}
\begin{tabular}{lll}
 & \emph{Principal angles} & \emph{Orthonormal bases}\\
Asimov & $d^{\alpha}_{\Gr(k,n)}(\mathbf{A}, \mathbf{B}) =  \theta_k$ & $\cos^{-1} \lVert A^\mathsf{T} B \rVert_2$\\
Binet--Cauchy & $d^{\beta}_{\Gr(k,n)}(\mathbf{A}, \mathbf{B}) = \left(1 - \prod\nolimits_{i=1}^k\cos^2\theta_i\right)^{1/2}$ & $(1 - (\det A^\mathsf{T} B)^2)^{1/2}$\\
Chordal & $d^{\kappa}_{\Gr(k,n)}(\mathbf{A}, \mathbf{B}) = \left(\sum\nolimits_{i=1}^k\sin^2\theta_i\right)^{1/2}$ & $\frac{1}{\sqrt{2}} \lVert AA^\mathsf{T} - BB^\mathsf{T} \rVert_F$\\
Fubini--Study & $d^{\phi}_{\Gr(k,n)}(\mathbf{A}, \mathbf{B}) = \cos^{-1} \left(\prod\nolimits_{i=1}^k\cos \theta_i\right)$ & $\cos^{-1} \lvert \det A^\mathsf{T} B \rvert$\\
Martin & $d^{\mu}_{\Gr(k,n)}(\mathbf{A}, \mathbf{B}) = \left( \log \prod\nolimits_{i=1}^k 1/\cos^2 \theta_i \right)^{1/2}$ & $(-2 \log \det A^\mathsf{T} B)^{1/2}$\\
Procrustes & $d^{\rho}_{\Gr(k,n)}(\mathbf{A}, \mathbf{B}) = 2\left(\sum\nolimits_{i=1}^k\sin^2(\theta_i/2)\right)^{1/2}$ & $\lVert AU - BV \rVert_F$\\
Projection & $d^{\pi}_{\Gr(k,n)}(\mathbf{A}, \mathbf{B}) = \sin \theta_k$ & $\lVert AA^\mathsf{T} - BB^\mathsf{T} \rVert_2$\\
Spectral & $d^{\sigma}_{\Gr(k,n)}(\mathbf{A}, \mathbf{B}) = 2\sin (\theta_k/2)$ & $\lVert AU - BV \rVert_2$
\end{tabular}
\end{table*}

The value $\sin \theta_1$ is sometimes called the \emph{max correlation distance} \cite{HamLee2008} or spectral distance \cite{DHST} but it is not a distance in the sense of a metric (can be zero for a pair of distinct subspaces) and thus not listed. The spectral distance $d^{\sigma}_{\Gr(k,n)}$ is also called chordal $2$-norm distance \cite{BN}. 
For each distance in Table~\ref{tab:distances} defined for equidimensional $\mathbf{A}$ and $\mathbf{B}$, Theorem~\ref{thm:othermetrics}  provides a corresponding version for when $\dim \mathbf{A} \ne \dim \mathbf{B}$.

The fact that all these distances in Table~\ref{tab:distances} depend on the principal angles is not a coincidence --- the result \cite[Theorem~3]{Wong} implies the following.
\begin{theorem}\label{thm:wong}
Any notion of distance between $k$-dimensional subspaces in $\mathbb{R}^n$ that depends only on the relative positions of the subspaces, i.e., invariant under any rotation in $\O(n)$, must be a function of their principal angles. To be more specific, if a distance $d : \Gr(k,n) \times \Gr(k,n) \to [0, \infty)$ satisfies
\[
d(Q \cdot \mathbf{A}, Q \cdot \mathbf{B}) =d(\mathbf{A}, \mathbf{B}),
\]
for all $\mathbf{A}, \mathbf{B} \in \Gr(k,n)$ and all $Q \in \O(n)$, where the action is as defined in \eqref{eq:action}, then $d$ must be a function of $\theta_i(\mathbf{A},\mathbf{B})$, $i=1,\dots,k$.
\end{theorem}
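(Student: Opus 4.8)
The plan is to show that the principal angles form a \emph{complete} invariant for the diagonal action of $\O(n)$ on pairs of $k$-planes, after which the theorem follows formally. The easy direction is that $\theta_i(Q\cdot\mathbf{A}, Q\cdot\mathbf{B}) = \theta_i(\mathbf{A},\mathbf{B})$ for every $Q \in \O(n)$: since $Q$ preserves inner products and norms, it carries any admissible pair $(p,q)$ in the optimization defining the principal vectors to an admissible pair with the same objective value $p^\mathsf{T} q$, so the recursively defined maxima are unchanged. The content of the theorem is the converse: if $(\mathbf{A},\mathbf{B})$ and $(\mathbf{A}',\mathbf{B}')$ have the same ordered list $\theta_1 \le \dots \le \theta_k$, then some $Q \in \O(n)$ satisfies $Q\cdot\mathbf{A} = \mathbf{A}'$ and $Q\cdot\mathbf{B}=\mathbf{B}'$. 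Granting this, any $\O(n)$-invariant $d$ is constant on the orbits of the diagonal action, and these orbits are precisely the fibers of the map $(\mathbf{A},\mathbf{B}) \mapsto (\theta_1, \dots, \theta_k)$; hence $d$ factors through the principal angles, which is the assertion.

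To prove that the orbits are exactly these fibers, I would put each pair into a canonical position built from its principal vectors. Writing $A^\mathsf{T} B = U \Sigma V^\mathsf{T}$ as in \eqref{eq:svd} and setting $[p_1, \dots, p_k] = AU$ and $[q_1,\dots,q_k] = BV$, the columns satisfy $p_i^\mathsf{T} p_j = \delta_{ij}$, $q_i^\mathsf{T} q_j = \delta_{ij}$, and $p_i^\mathsf{T} q_j = \delta_{ij}\cos\theta_i$ by \eqref{eq:angles}. The crucial consequence is that the planes $\Pi_i \coloneqq \operatorname{span}(p_i, q_i)$ are mutually orthogonal: for $i \ne j$ each of the four inner products $p_i^\mathsf{T} p_j, p_i^\mathsf{T} q_j, q_i^\mathsf{T} p_j, q_i^\mathsf{T} q_j$ vanishes. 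Inside each $\Pi_i$ with $\theta_i \in (0,\pi/2]$ I would choose the orthonormal basis $e_i \coloneqq p_i$, $f_i \coloneqq (q_i - \cos\theta_i\, p_i)/\sin\theta_i$, so that $p_i = e_i$ and $q_i = \cos\theta_i\, e_i + \sin\theta_i\, f_i$; when $\theta_i = 0$, Proposition~\ref{prop:angles} gives $p_i = q_i$, so $\Pi_i$ is the line $\operatorname{span}(e_i)$ with $e_i \coloneqq p_i = q_i$. Completing $\{e_i, f_i\}$ by any orthonormal basis of the orthogonal complement $W$ of $\mathbf{A} + \mathbf{B} = \bigoplus_i \Pi_i$ yields an orthonormal basis of $\mathbb{R}^n$ in which $\mathbf{A} = \operatorname{span}\{e_1, \dots, e_k\}$ and $\mathbf{B} = \operatorname{span}\{\cos\theta_i\, e_i + \sin\theta_i\, f_i : i = 1,\dots,k\}$, an expression depending only on the $\theta_i$.

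Finally I would match the two canonical bases. Because $\mathbf{A}, \mathbf{A}', \mathbf{B}, \mathbf{B}'$ all sit in $\Gr(k,n)$ and share the same angles, the numbers of degenerate blocks ($\theta_i = 0$) and nondegenerate blocks agree, so $\dim(\mathbf{A}+\mathbf{B}) = \dim(\mathbf{A}'+\mathbf{B}')$ and hence $\dim W = \dim W'$. The linear map $Q$ sending the canonical basis of the first pair to that of the second ($e_i \mapsto e_i'$, $f_i \mapsto f_i'$, and any orthonormal basis of $W$ to one of $W'$) is then orthogonal, and by the coordinate expressions above it carries $\mathbf{A}$ to $\mathbf{A}'$ and $\mathbf{B}$ to $\mathbf{B}'$. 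The main obstacle is precisely the mutual orthogonality of the planes $\Pi_i$ --- everything else is dimension bookkeeping --- but this drops out immediately from the \textsc{svd}-derived relations among the principal vectors, so no genuine difficulty remains. Alternatively, one may invoke \cite[Theorem~3]{Wong} directly as cited; the argument sketched here is the linear-algebraic content behind it.
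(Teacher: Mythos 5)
Your proof is correct, but it takes a genuinely different route from the paper: the paper offers no argument at all for this theorem, simply invoking \cite[Theorem~3]{Wong}, whereas you supply the full linear-algebraic content --- showing that the ordered list of principal angles is a complete invariant for the diagonal $\O(n)$-action on $\Gr(k,n) \times \Gr(k,n)$, so that any invariant function factors through the angle map. Your canonical-form construction is sound: the relations $p_i^\mathsf{T} p_j = \delta_{ij}$, $q_i^\mathsf{T} q_j = \delta_{ij}$, $p_i^\mathsf{T} q_j = \delta_{ij}\cos\theta_i$ do follow from $(AU)^\mathsf{T}(BV) = U^\mathsf{T}(U\Sigma V^\mathsf{T})V = \Sigma$, the mutual orthogonality of the planes $\Pi_i$ is exactly right, and the dimension bookkeeping for $W$ and $W'$ goes through because the number of zero angles agrees. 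One small repair: when $\theta_i = 0$, Proposition~\ref{prop:angles} only gives equality of the \emph{spans} $\operatorname{span}\{p_1,\dots,p_m\} = \operatorname{span}\{q_1,\dots,q_m\}$, not $p_i = q_i$ vector by vector; the fact you actually need follows instead from the equality case of Cauchy--Schwarz, since $p_i^\mathsf{T} q_i = \cos\theta_i = 1$ with $\lVert p_i\rVert = \lVert q_i\rVert = 1$ forces $p_i = q_i$. With that substitution your argument is self-contained and buys the reader something the paper does not: an explicit, \textsc{svd}-based proof of Wong's theorem requiring no external reference, and as a by-product an explicit canonical position for any pair of $k$-planes, which is in the same spirit as the constructive proofs the paper gives for its main results.
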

We will next introduce the \emph{infinite Grassmannian} $\Gr(k,\infty)$ to show that these distances between subspaces are independent of the dimension of their ambient space.

%%%%
\section{The Infinite Grassmannian}\label{sec:infty}

One way of defining a distance between $\mathbf{A} \in \Gr(k,n)$ and $\mathbf{B}\in \Gr(l,n)$ where $k \ne l$ is to first isometrically embed $\Gr(k,n)$ and $\Gr(l,n)$ into an ambient Riemannian manifold and then define the distance between $\mathbf{A}$ and $\mathbf{B}$ as their distance in the ambient space. This approach is taken in \cite{CHS, SLLM}, via an isometric embedding of $\Gr(0,n), \Gr(1,n), \dots, \Gr(n,n )$ into a sphere of dimension $(n-1)(n+2)/2$. Such a distance suffers from two shortcomings: It is not intrinsic to the Grassmannian and it depends on both the embedding and the ambient space.

The distance that we propose in Section~\ref{sec:main} will depend only on the intrinsic distance of the Grassmannian and is independent of $n$, i.e., a $k$-plane $\mathbf{A}$ and an $l$-plane $\mathbf{B}$ in  $\mathbb{R}^n$ will have the same distance if we regard them as subspaces in $\mathbb{R}^m$ for any $m \ge \min(k,l)$. We will first establish this for the special case $k = l$.

Consider the inclusion map $\iota_n:\mathbb{R}^n\to \mathbb{R}^{n+1}$, $\iota_n(x_1,\dots, x_n)=(x_1,\dots, x_n,0)$. It is easy to see that $\iota_n$ induces a natural inclusion of $\Gr(k,n)$ into $\Gr(k,n+1)$ which we will also denote by $\iota_n$. For any $m > n$, composition of successive natural inclusions gives the inclusion $\iota_{nm} : \Gr(k,n) \to \Gr(k,m)$, where
$\iota_{nm} \coloneqq \iota_n \circ \iota_{n+1} \circ \dots \circ \iota_{m-1}$.
To be more concrete, if $A \in \mathbb{R}^{n \times k}$ has orthonormal columns, then
\begin{equation}\label{eq:iota}
\iota_{nm} : \Gr(k,n) \to \Gr(k,m), \qquad
\operatorname{span} (A) \mapsto \operatorname{span} \left( \begin{bmatrix}A \\ 0 \end{bmatrix}  \right), 
\end{equation}
where the zero block matrix is $(m-n) \times k$ so that $\left[ \begin{smallmatrix}A \\ 0 \end{smallmatrix} \right] \in \mathbb{R}^{m \times k}$.

For a fixed $k$, the family of Grassmannians $\{ \Gr(k,n): n\in \mathbb{N}, \; n \ge k \}$ together with the inclusion maps $\iota_{nm}:\Gr(k,n)\to \Gr(k,m)$ for $m > n$ form a direct system. The \emph{infinite Grassmannian} of $k$-planes  is defined to be the direct limit of this system in the category of topological spaces and denoted by
\[
\Gr(k,\infty)\coloneqq \varinjlim\Gr(k,n).
\]
Those unfamiliar with the notion of direct limits may simply take
\[
\Gr(k, \infty) = \bigcup\nolimits_{n=k}^\infty \Gr(k,n),
\]
where we regard $\Gr(k,n) \subset \Gr(k,n+1)$ by identifying $\Gr(k,n)$ with $\iota_n\bigl(\Gr(k,n)\bigr)$. With this identification, we no longer need to distinguish between $\mathbf{A} \in \Gr(k,n)$ and its image $\iota_{n}(\mathbf{A})\in \Gr(k,n+1)$ and may regard $\mathbf{A} \in \Gr(k,m)$ for all $m > n$.

We now define a distance $d_{\Gr(k, \infty)}$ on $\Gr(k,\infty)$ that is consistent with the Grassmann distance on $\Gr(k,n)$ for all $n$ sufficiently large.
\begin{lemma}\label{lem:infty}
The natural inclusion $\iota_n:\Gr(k,n)\to \Gr(k,n+1)$ is isometric, i.e.,
\begin{equation}\label{eq:iso}
d_{\Gr(k,n)}(\mathbf{A},\mathbf{B})=d_{\Gr(k,n+1)}\bigl(\iota_n(\mathbf{A}),\iota_n(\mathbf{B})\bigr).
\end{equation}
Repeated applications of \eqref{eq:iso} yields
\begin{equation}\label{eq:iso1}
d_{\Gr(k,n)}(\mathbf{A},\mathbf{B})=d_{\Gr(k,m)}\bigl(\iota_{nm}(\mathbf{A}),\iota_{nm}(\mathbf{B})\bigr)
\end{equation}
for all $m > n$ and if we identify $\Gr(k,n)$ with $\iota_n\bigl(\Gr(k,n)\bigr)$, we may rewrite \eqref{eq:iso1} as
\[
d_{\Gr(k,n)}(\mathbf{A},\mathbf{B})=d_{\Gr(k,m)}(\mathbf{A},\mathbf{B})
\]
for all $m > n$.
\end{lemma}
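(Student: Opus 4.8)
The plan is to reduce everything to the closed-form expression \eqref{eq:grassdist1}, so that only the singular values of a single matrix need to be tracked. Since \eqref{eq:grassdist1} expresses $d_{\Gr(k,n)}(\mathbf{A},\mathbf{B})$ entirely through the principal angles $\theta_1,\dots,\theta_k$, and these are the arccosines of the singular values of $A^\mathsf{T} B$ by \eqref{eq:svd} and \eqref{eq:angles}, it suffices to prove that this matrix product is unaffected by the inclusion $\iota_n$.

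First I would fix $k$-frames: let $A,B\in\mathbb{R}^{n\times k}$ be orthonormal bases of $\mathbf{A}$ and $\mathbf{B}$. By the concrete description \eqref{eq:iota}, we have $\iota_n(\mathbf{A})=\operatorname{span}(\widetilde A)$ and $\iota_n(\mathbf{B})=\operatorname{span}(\widetilde B)$, where $\widetilde A=\left[\begin{smallmatrix}A\\0\end{smallmatrix}\right]$ and $\widetilde B=\left[\begin{smallmatrix}B\\0\end{smallmatrix}\right]$ lie in $\mathbb{R}^{(n+1)\times k}$ with a single zero row appended. A one-line computation gives $\widetilde A^\mathsf{T}\widetilde A=A^\mathsf{T} A=I_k$, so $\widetilde A$ is again a genuine $k$-frame, and likewise for $\widetilde B$; hence \eqref{eq:svd} and \eqref{eq:angles} apply verbatim to the pair $\widetilde A,\widetilde B$.

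The crux of the argument is the block identity $\widetilde A^\mathsf{T}\widetilde B=A^\mathsf{T} B$, which is immediate since the appended zero rows annihilate one another under multiplication. Therefore the full \textsc{svd} \eqref{eq:svd} of $\widetilde A^\mathsf{T}\widetilde B$ coincides with that of $A^\mathsf{T} B$, the singular values $\sigma_1,\dots,\sigma_k$ agree, and by \eqref{eq:angles} so do the principal angles, i.e.\ $\theta_i(\mathbf{A},\mathbf{B})=\theta_i\bigl(\iota_n(\mathbf{A}),\iota_n(\mathbf{B})\bigr)$ for each $i$. Feeding this into \eqref{eq:grassdist1} yields \eqref{eq:iso}.

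There is essentially no hard step here; the proof is a one-line observation once the frames are padded. The only points demanding a moment's care are that $\iota_n$ is well defined on subspaces, i.e.\ independent of the chosen frame (clear, since padding commutes with the right $\O(k)$-action: $\widetilde{AQ}=\widetilde A\,Q$), and that orthonormality survives padding (already checked above). Finally, \eqref{eq:iso1} follows by iterating \eqref{eq:iso} along the chain $\Gr(k,n)\hookrightarrow\Gr(k,n+1)\hookrightarrow\cdots\hookrightarrow\Gr(k,m)$, and the concluding display is simply \eqref{eq:iso1} transcribed under the identification of $\Gr(k,n)$ with $\iota_n\bigl(\Gr(k,n)\bigr)$.
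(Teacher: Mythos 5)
Your proof is correct and follows essentially the same route as the paper: pad the orthonormal frames with a zero row, observe that $\iota_n(A)^\mathsf{T}\iota_n(B)=A^\mathsf{T}B$ so the singular values and hence principal angles are unchanged, and conclude via \eqref{eq:grassdist1}. If anything, your computation of the product is cleaner than the paper's (whose displayed block formula contains a spurious zero block), and your remarks on frame-independence and iteration match the paper's treatment.
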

\begin{proof} If $a\in \mathbb{R}^n$, we write $\hat{a} = \left[\begin{smallmatrix}
a \\
0
\end{smallmatrix}  \right]\in \mathbb{R}^{n+1}$. Let $A = [a_1,\dots, a_k]$ and $B = [b_1,\dots,b_k]$ be any orthonormal bases of $\mathbf{A}$ and $\mathbf{B}$ respectively. By the definition of  $\iota_n$, $\iota_n(\mathbf{A})$ is the subspace in $\mathbb{R}^{n+1}$ spanned by an orthonormal basis that we will denote by $\iota_n(A) \coloneqq [\hat{a}_1,\dots, \hat{a}_k] \in \mathbb{R}^{(n+1) \times k}$. Hence we have
\[
\iota_n(A)^\mathsf{T} \iota_n(B)
=
\begin{bmatrix}
A^\mathsf{T} B \\
0
\end{bmatrix}.
\]
By the expression for Grassmann distance in \eqref{eq:grassdist1}, we see that \eqref{eq:iso} must hold.
\end{proof}

Since the inclusion of  $\Gr(k,n)$ in  $\Gr(k,n+1)$ is isometric, a geodesic in $\Gr(k,n)$ remains a geodesic in $\Gr(k,n+1)$. Given $\mathbf{A},\mathbf{B} \in \Gr(k, \infty)$, there must exist some $n$ sufficiently large so that both $\mathbf{A},\mathbf{B} \in \Gr(k, n)$ and in which case we define the distance between $\mathbf{A}$ and $\mathbf{B}$ in $ \Gr(k, \infty)$ to be
\[
d_{\Gr(k, \infty)}(\mathbf{A},\mathbf{B}) \coloneqq d_{\Gr(k, n)}(\mathbf{A},\mathbf{B}).
\]
By Lemma~\ref{lem:infty}, this value is independent of our choice of $n$ and is the same for all $m \ge n$. In particular, $d_{\Gr(k, \infty)}$ is well-defined and yields a distance on $\Gr(k,\infty)$. We summarize these observations below.
\begin{corollary}\label{cor:infty}
The Grassmann distance between two $k$-planes in $\Gr(k,n)$ is the geodesic distance in $\Gr(k,\infty)$ and is therefore independent of $n$. Also, the expression \eqref{eq:geodesic}  for a distance minimizing geodesic in $\Gr(k,n)$ extends to $\Gr(k,\infty)$. 
\end{corollary}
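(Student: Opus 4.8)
The plan is to treat the corollary's two assertions separately, both resting on Lemma~\ref{lem:infty} together with the direct-limit definition of $\Gr(k,\infty)$. For the first assertion, fix $\mathbf{A},\mathbf{B}\in\Gr(k,\infty)$ and choose $n$ large enough that both lie in $\Gr(k,n)$. I want to show that the geodesic distance in $\Gr(k,\infty)$, understood as the infimum of lengths of paths joining $\mathbf{A}$ to $\mathbf{B}$, equals $d_{\Gr(k,n)}(\mathbf{A},\mathbf{B})$. The upper bound is immediate: each inclusion $\iota_n$ sends a $k$-frame $A$ to $\left[\begin{smallmatrix} A \\ 0\end{smallmatrix}\right]$ and hence preserves the Frobenius inner product on tangent vectors, so it is a Riemannian isometric embedding and in particular preserves lengths of curves. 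Transporting the explicit minimizing geodesic $\gamma$ of \eqref{eq:geodesic} in $\Gr(k,n)$ into $\Gr(k,\infty)$ therefore yields a path of the same length, giving $d_{\Gr(k,\infty)}(\mathbf{A},\mathbf{B})\le d_{\Gr(k,n)}(\mathbf{A},\mathbf{B})$.

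For the lower bound I would invoke the defining property of the direct-limit topology: any path $[0,1]\to\Gr(k,\infty)$ has compact image, which must therefore be contained in some finite stage $\Gr(k,N)$ with $N\ge n$. Its length, computed in $\Gr(k,N)$, is at least $d_{\Gr(k,N)}(\mathbf{A},\mathbf{B})$, and by repeated application of \eqref{eq:iso} in Lemma~\ref{lem:infty} this equals $d_{\Gr(k,n)}(\mathbf{A},\mathbf{B})$. Taking the infimum over all such paths gives $d_{\Gr(k,\infty)}(\mathbf{A},\mathbf{B})\ge d_{\Gr(k,n)}(\mathbf{A},\mathbf{B})$, and combining the two bounds yields equality. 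Since the right-hand side is independent of $n$ by Lemma~\ref{lem:infty}, so is the geodesic distance.

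For the second assertion I would verify that the formula \eqref{eq:geodesic} is stable under inclusion, i.e.\ evaluating it with $A,B$ replaced by $\left[\begin{smallmatrix} A \\ 0\end{smallmatrix}\right],\left[\begin{smallmatrix} B \\ 0\end{smallmatrix}\right]$ reproduces exactly $\iota_n(\gamma(t))$. The point is that $U$ and $\Theta$ arise from the \textsc{svd} of $A^\mathsf{T} B$, which is unchanged, while a one-line computation shows $M=(I-AA^\mathsf{T})B(A^\mathsf{T} B)^{-1}$ transforms to $\left[\begin{smallmatrix} M \\ 0\end{smallmatrix}\right]$, so its condensed \textsc{svd} has $Q$ replaced by $\left[\begin{smallmatrix} Q \\ 0\end{smallmatrix}\right]$ with the same $\Theta$ and $U$. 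Hence $\gamma(t)$ computed in $\Gr(k,n+1)$ is precisely $\iota_n(\gamma(t))$; since endpoints and lengths are preserved and the distance was just shown to be preserved, the image remains a minimizing geodesic. Consistency across all stages then means the single formula defines a distance-minimizing geodesic in $\Gr(k,\infty)$.

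The main obstacle is the lower-bound step: one must rule out that a path in $\Gr(k,\infty)$ could shortcut between $\mathbf{A}$ and $\mathbf{B}$ by wandering through arbitrarily high stages. This is exactly what the compactness of the path's image in the direct limit forbids, confining it to a finite $\Gr(k,N)$ where Lemma~\ref{lem:infty} applies; once this is in place, the remaining verifications are routine.
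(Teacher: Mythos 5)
Your proposal is correct, but it takes a genuinely different (and in fact more demanding) route than the paper. The paper does not really prove the corollary as a separate statement: immediately after Lemma~\ref{lem:infty} it \emph{defines} $d_{\Gr(k,\infty)}(\mathbf{A},\mathbf{B}) \coloneqq d_{\Gr(k,n)}(\mathbf{A},\mathbf{B})$ for any $n$ large enough that both subspaces lie in $\Gr(k,n)$, notes that Lemma~\ref{lem:infty} makes this well-defined, and remarks that since the inclusions are isometric a geodesic in $\Gr(k,n)$ remains a geodesic in $\Gr(k,n+1)$; the corollary is then a summary of these observations, essentially definitional. You instead interpret ``geodesic distance in $\Gr(k,\infty)$'' intrinsically, as the infimum of lengths of paths in the direct limit, and prove that this intrinsic quantity coincides with the finite-stage distance. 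This forces you to handle the one issue the paper's definition sidesteps: a path in $\Gr(k,\infty)$ might a priori shortcut through arbitrarily high stages. Your compactness argument --- a continuous path has compact image, and a compact subset of a sequential direct limit of closed $T_1$ (here compact Hausdorff) inclusions must lie in a single finite stage $\Gr(k,N)$, where Lemma~\ref{lem:infty} applies --- is the correct and standard way to rule this out, and it is the key ingredient the paper never needs. Your explicit verification that \eqref{eq:geodesic} is stable under $\iota_n$ (the \textsc{svd} of $A^\mathsf{T}B$ is unchanged and $M$ becomes $\left[\begin{smallmatrix} M \\ 0 \end{smallmatrix}\right]$, so $Q$ becomes $\left[\begin{smallmatrix} Q \\ 0 \end{smallmatrix}\right]$) likewise substantiates, by direct computation, what the paper only asserts. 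In short: the paper's approach buys brevity by building the conclusion into the definition of $d_{\Gr(k,\infty)}$, whereas yours buys a stronger statement --- that the stable finite-stage value genuinely is the path-metric of the direct limit --- at the cost of the confinement argument.
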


Lemma~\ref{lem:infty} also holds for other distances on $\Gr(k,n)$  in Table~\ref{tab:distances}, allowing us to define them on $\Gr(k, \infty)$.
\begin{lemma}\label{lem:inclusion}
For all $m > n$, the inclusion $\iota_{nm}:\Gr(k,n)\to \Gr(k,m)$ is isometric when $\Gr(k,n)$ and $\Gr(k,m)$ are both equipped with the same distance in Table~\ref{tab:distances}, i.e., 
\[
d^*_{\Gr(k,n)}(\mathbf{A},\mathbf{B})=d^*_{\Gr(k,m)}(\iota_{nm}(\mathbf{A}),\iota_{nm}\bigl(\mathbf{B})\bigr),
\]
$* = \alpha, \beta, \kappa, \phi, \mu, \rho, \pi, \sigma$. Consequently $d^{\ast}_{\Gr(k,\infty)}$ is well-defined.
\end{lemma}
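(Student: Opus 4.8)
The plan is to reduce the statement for all eight distances to a single fact: the inclusion $\iota_{nm}$ preserves principal angles. By inspection of the middle (``Principal angles'') column of Table~\ref{tab:distances}, each distance $d^*_{\Gr(k,n)}$, $* = \alpha, \beta, \kappa, \phi, \mu, \rho, \pi, \sigma$, is an explicit function of $\theta_1,\dots,\theta_k$ alone. Hence it suffices to show that the principal angles between $\mathbf{A}$ and $\mathbf{B}$ equal those between $\iota_{nm}(\mathbf{A})$ and $\iota_{nm}(\mathbf{B})$.

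First I would recall the computation in the proof of Lemma~\ref{lem:infty}: if $A, B \in \V(k,n)$ are $k$-frames of $\mathbf{A}, \mathbf{B}$, then $\iota_{nm}(A), \iota_{nm}(B)$ are $k$-frames of $\iota_{nm}(\mathbf{A}), \iota_{nm}(\mathbf{B})$ obtained by appending $m-n$ zero rows, and
\[
\iota_{nm}(A)^\mathsf{T} \iota_{nm}(B) = \begin{bmatrix} A^\mathsf{T} B \\ 0 \end{bmatrix}.
\]
Appending zero rows to a matrix leaves its singular values unchanged, so the diagonal matrix $\Sigma$ in the \textsc{svd} \eqref{eq:svd} is identical for both pairs. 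By \eqref{eq:angles} the principal angles $\theta_i = \cos^{-1}\sigma_i$ therefore coincide for $i = 1,\dots,k$; note that here the two subspaces both have dimension $k$, so $r = \min(k,k) = k$ and all $k$ principal angles are genuine, with no padding ambiguity.

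Combining the two observations gives $d^*_{\Gr(k,n)}(\mathbf{A},\mathbf{B}) = d^*_{\Gr(k,m)}(\iota_{nm}(\mathbf{A}), \iota_{nm}(\mathbf{B}))$ for each $*$, which is the asserted isometry. Well-definedness of $d^*_{\Gr(k,\infty)}$ then follows verbatim from the direct-limit argument used for the Grassmann distance before Corollary~\ref{cor:infty}: given $\mathbf{A}, \mathbf{B} \in \Gr(k,\infty)$, both lie in some $\Gr(k,n)$, and the common value $d^*_{\Gr(k,n)}(\mathbf{A},\mathbf{B})$ is then independent of the chosen $n$ by the isometry just proved.

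I expect no serious obstacle, since the argument is essentially that of Lemma~\ref{lem:infty} combined with the principal-angle form of each distance. The only point requiring a little care is to work from the principal-angle expressions rather than the orthonormal-basis expressions: for the Procrustes and spectral distances the latter involve the \textsc{svd} factors $U, V$ (via $AU - BV$), and one would otherwise have to check that these factors transform compatibly under $\iota_{nm}$; using the principal-angle column makes this unnecessary.
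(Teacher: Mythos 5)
Your proposal is correct and follows essentially the same route as the paper's proof: the paper likewise argues that every distance in Table~\ref{tab:distances} is a function of the principal angles alone, and that these angles are unchanged under the inclusion (implicitly via the computation $\iota_n(A)^\mathsf{T}\iota_n(B) = \left[\begin{smallmatrix} A^\mathsf{T} B \\ 0\end{smallmatrix}\right]$ from Lemma~\ref{lem:infty}), then composes the one-step inclusions. Your write-up merely makes explicit the singular-value argument and the direct-limit step that the paper leaves tacit.
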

\begin{proof}
$d^{\ast}_{\Gr(k,n)}(\mathbf{A},\mathbf{B})$ and $d^{\ast}_{\Gr(k,n+1)}\bigl(\iota_n(\mathbf{A}),\iota_n\bigl(\mathbf{B})\bigr)$ depend only on the principal angles between $\mathbf{A}$ and $\mathbf{B}$, so the distance remains unchanged under $\iota_n$. Repeated applications to $\iota_n \circ \iota_{n+1} \circ \dots \circ \iota_{m-1} = \iota_{nm}$ yield the required isometry.
\end{proof}

%%%%
\section{Distances between subspaces of different dimensions}\label{sec:main}

We now address our main problem. The proposed notion of distance will be that of a point $x \in X$ to a set $S \subset X$ in a metric space $(X,d)$. Recall that this is defined by $d(x, S) \coloneqq \inf\{d(x,y) : y  \in S \}$. For us, $X$ is a Grassmannian, therefore compact, and so $d(x,S)$ is finite. Also, $S$ will be a closed subset and so we write $\min$ instead of $\inf$. We will introduce two possible candidates for $S$.

\begin{definition}\label{def:Omega}
Let $k,l, n \in \mathbb{N}$ be  such that $k\le l \le n$. For any $\mathbf{A}\in \Gr(k,n)$ and $\mathbf{B}\in \Gr(l,n)$, we define the subsets
\[
\Omega_{+}(\mathbf{A})\coloneqq\bigl\{\mathbf{X}\in \Gr(l,n) : \mathbf{A}\subseteq \mathbf{X}\bigr\},\quad
\Omega_{-}(\mathbf{B})\coloneqq\bigl\{\mathbf{Y}\in \Gr(k,n) : \mathbf{Y} \subseteq \mathbf{B}\bigr\}.
\]
We will call $\Omega_{+}(\mathbf{A})$ the \emph{Schubert variety of $l$-planes containing $\mathbf{A}$} and $\Omega_{-}(\mathbf{B})$ the \emph{Schubert variety of $k$-planes contained in $\mathbf{B}$}.
\end{definition}
As we will see in Section~\ref{sec:schubert}, $\Omega_{+}(\mathbf{A})$ and $\Omega_{-}(\mathbf{B})$ are indeed Schubert varieties and therefore closed subsets of $\Gr(l,n)$ and $\Gr(k,n)$ respectively. Furthermore, $\Omega_{+}(\mathbf{A})$ and $\Omega_{-}(\mathbf{B})$ are uniquely determined by $\mathbf{A}$ and $\mathbf{B}$  (see Proposition~\ref{prop:Omega1}) and may be regarded as `sub-Grassmannians'  of $\Gr(l,n)$ and $\Gr(k,n)$ respectively (see Proposition~\ref{prop:OmegaGrass}).

How could one define the distance between a subspace $\mathbf{A}$ of dimension $k$ and a subspace $\mathbf{B}$ of dimension $l$ in $\mathbb{R}^n$ when $k \ne l$? We may assume $k < l\le n$ without loss of generality. In which case a very natural solution is to define the required distance $\delta(\mathbf{A}, \mathbf{B}) $ as that between the $k$-plane $\mathbf{A}$ and the closest $k$-plane $\mathbf{Y}$ contained in $\mathbf{B}$, measured within $\Gr(k,n)$. In other words, we want the Grassmann distance from $\mathbf{A}$ to the closed subset $\Omega_{-}(\mathbf{B})$,
\begin{equation}\label{eq:minus}
\delta(\mathbf{A}, \mathbf{B}) \coloneqq d_{\Gr(k,n)}\bigl(\mathbf{A}, \Omega_{-}\bigl(\mathbf{B})\bigr) = \min\bigl\{ d_{\Gr(k,n)}(\mathbf{A},\mathbf{Y}) :  \mathbf{Y}\in \Omega_{-}(\mathbf{B})\bigr\}. 
\end{equation}
This has the advantage of being intrinsic --- the distance $\delta(\mathbf{A}, \mathbf{B}) $  is measured in $d_{\Gr(k,n)}$ and is defined wholly  within $\Gr(k,n)$ without any embedding of $\Gr(k,n)$ into an arbitrary ambient space. Furthermore, by the property of $d_{\Gr(k,n)}$ in Corollary~\ref{cor:infty},  $\delta(\mathbf{A}, \mathbf{B}) $ does not depend on $n$ and takes the same value for any $m \ge n$. We illustrate this in Figure~\ref{fig:sphere}: The sphere is intended to be a depiction of $\Gr(1,3)$ though to be accurate antipodal points on the sphere should be identified.
\begin{figure}[ht]
\centering
\begin{tikzpicture}%[scale=0.86]
% point A
\draw[red][fill] (-4,0) circle[radius=0.05];
\node[red] [left] at (-4.08,0) {$\mathbf{A}$};
% sphere 
    \draw[blue] (0,4) arc (90:270:2cm and 4cm);
    \draw[dashed] (0,4) arc (90:-90:2cm and 4cm);
    \draw (0,0) circle (4cm);
    \shade[ball color=blue!10!white,opacity=0.20] (0,0) circle (4cm);
    \node[black][below right] at (3,-3) {$\operatorname{Gr}(1,3)$};
% \Omega_{-}(\mathbf{B})
        \node[blue] [left] at (0,2.5) {$\Omega_{-}(\mathbf{B})$};
% geodesic
\node[above,magenta] at (-3,-1.25) {$\gamma$};
            \draw[magenta] (-4,0) arc (180:243:4cm and 2cm);
% nearest point
\draw[blue][fill] (-1.8,-1.8) circle[radius=0.05];
\node[blue] [below] at (-2,-1.9) {$\mathbf{X}$};
\end{tikzpicture}
\caption{Distance between a line $\mathbf{A}$ and a plane $\mathbf{B}$ in $\mathbb{R}^3$. $\mathbf{X}$ is closest to $\mathbf{A}$ among all lines in $\mathbf{B}$. The length of the geodesic $\gamma$ from $\mathbf{A}$ to $\mathbf{X}$ gives the distance.}
\label{fig:sphere}
\end{figure}
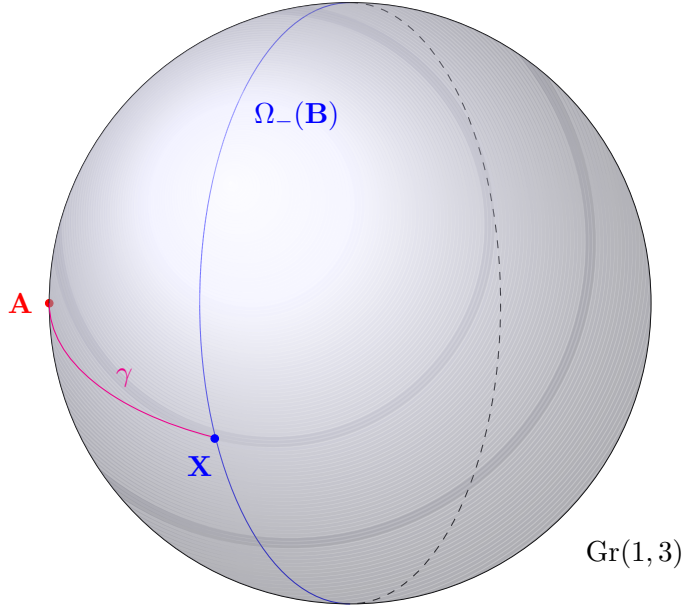

However, it is equally natural to define  $\delta(\mathbf{A}, \mathbf{B}) $  as the distance between the $l$-plane $\mathbf{B}$ and the closest $l$-plane $\mathbf{Y}$ containing $\mathbf{A}$, measured within $\Gr(l,n)$. In other words, we could have instead defined  it as the Grassmann distance from $\mathbf{B}$ to the closed subset $\Omega_{+}(\mathbf{A})$,
\begin{equation}\label{eq:plus}
\delta(\mathbf{A}, \mathbf{B}) \coloneqq d_{\Gr(l,n)}\bigl(\mathbf{B}, \Omega_{+}\bigl(\mathbf{A})\bigr) 
= \min\bigl\{ d_{\Gr(l,n)}(\mathbf{B},\mathbf{X}) :  \mathbf{X}\in \Omega_{+}(\mathbf{A})\bigr\}.
\end{equation}
It will have the same desirable features as the one in \eqref{eq:minus} except that the distance is now measured in $d_{\Gr(l,n)}$ and within $\Gr(l,n)$.

It turns out that the two values in \eqref{eq:minus} and \eqref{eq:plus} are equal, allowing us to define $\delta(\mathbf{A}, \mathbf{B})$ as their common value. We will establish this equality and the properties of $\delta(\mathbf{A}, \mathbf{B})$ in the remainder of this section. The results are summarized in  Theorem~\ref{thm1}. Our proof is constructive: In addition to showing the equality of \eqref{eq:minus} and \eqref{eq:plus}, it shows how one may explicitly find the closest points on Schubert varieties $\mathbf{X} \in \Omega_{-}(\mathbf{B})$ and $\mathbf{Y} \in \Omega_{+}(\mathbf{A})$ to  any given point in the respective Grassmannians.% (see Algorithm~\ref{alg:nearest}).
\begin{theorem}\label{thm1}
Let $\mathbf{A}$ be a subspace of dimension $k$ and $\mathbf{B}$ be a subspace of dimension $l$ in $\mathbb{R}^n$. Suppose $k \le l \le n$. Then
\begin{equation}\label{eq:equal}
 d_{\Gr(k,n)}\bigl(\mathbf{A}, \Omega_{-}\bigl(\mathbf{B})\bigr) =  d_{\Gr(l,n)}\bigl(\mathbf{B}, \Omega_{+}\bigl(\mathbf{A})\bigr).
\end{equation}
Their common value defines a distance $\delta(\mathbf{A}, \mathbf{B}) $ between the two subspaces with the following properties:
\begin{enumerate}[\upshape (i)]
\item\label{indp} $\delta(\mathbf{A}, \mathbf{B}) $  is independent of the dimension of the ambient space $n$ and is the same for all $n \ge l+1$;
\item\label{reduce} $\delta(\mathbf{A}, \mathbf{B}) $  reduces to the Grassmann distance between $\mathbf{A}$ and $\mathbf{B}$ when $k = l$;
\item\label{explicit} $\delta(\mathbf{A}, \mathbf{B}) $  may be computed explicitly as
\begin{equation}\label{eq:grassdist2}
\delta(\mathbf{A},\mathbf{B})=\Bigl(\sum\nolimits_{i=1}^{\min \{k,l\}}\theta_i(\mathbf{A},\mathbf{B})^2 \Bigr)^{1/2} 
\end{equation}
where $\theta_i(\mathbf{A},\mathbf{B})$ is the $i$th principal angle between $\mathbf{A}$ and $\mathbf{B}$, $i =1,\dots,\min(k,l)$.
\end{enumerate}
\end{theorem}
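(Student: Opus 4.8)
The plan is to reduce both optimization problems in \eqref{eq:minus} and \eqref{eq:plus} to explicit singular value computations and to show that each side of \eqref{eq:equal} equals the claimed quantity $\bigl(\sum_{i=1}^{k}\theta_i(\mathbf{A},\mathbf{B})^2\bigr)^{1/2}$; the equality \eqref{eq:equal} and the formula \eqref{eq:grassdist2} then drop out at once. Throughout I fix orthonormal bases $A,B$ and the \textsc{svd} $A^\mathsf{T} B = U\Sigma V^\mathsf{T}$ as in \eqref{eq:svd}, so that the principal vectors are the columns of $AU=[p_1,\dots,p_k]$ and $BV=[q_1,\dots,q_l]$, and $(AU)^\mathsf{T}(BV)=\Sigma$ forces $p_i^\mathsf{T} q_j=\sigma_i\delta_{ij}$ for $j\le k$ and $p_i^\mathsf{T} q_j=0$ for $j>k$ (note $r=\min(k,l)=k$). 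The single external tool I need is the interlacing of singular values under compression: if $P$ has orthonormal columns then $\sigma_i(MP)\le\sigma_i(M)$, while if $P$ is $l\times k$ with orthonormal columns (so $P^\mathsf{T}$ has orthonormal rows) then $\sigma_i(P^\mathsf{T} M)\ge\sigma_{i+(l-k)}(M)$; both follow from the Poincar\'e separation theorem by extending $P$ to an orthogonal matrix and deleting rows or columns.

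For the left-hand side of \eqref{eq:equal}, I would parametrize $\mathbf{Y}\in\Omega_{-}(\mathbf{B})$ by an orthonormal basis $Y=BC$ with $C\in\V(k,l)$, so that $A^\mathsf{T} Y=(A^\mathsf{T} B)C$. Since $C$ has orthonormal columns, interlacing gives $\sigma_i(A^\mathsf{T} Y)\le\sigma_i(A^\mathsf{T} B)$, hence $\cos\theta_i(\mathbf{A},\mathbf{Y})\le\cos\theta_i(\mathbf{A},\mathbf{B})$ and therefore $\theta_i(\mathbf{A},\mathbf{Y})\ge\theta_i(\mathbf{A},\mathbf{B})$ for $i=1,\dots,k$. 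By \eqref{eq:grassdist1} this yields $d_{\Gr(k,n)}(\mathbf{A},\mathbf{Y})\ge\bigl(\sum_{i=1}^k\theta_i(\mathbf{A},\mathbf{B})^2\bigr)^{1/2}$ for every $\mathbf{Y}\in\Omega_{-}(\mathbf{B})$. Equality is attained by the explicit candidate $\mathbf{Y}=\operatorname{span}\{q_1,\dots,q_k\}$, since the inner-product relations give $(AU)^\mathsf{T}[q_1,\dots,q_k]=\Diag(\sigma_1,\dots,\sigma_k)$, so the principal angles between $\mathbf{A}$ and this $\mathbf{Y}$ are exactly $\theta_1,\dots,\theta_k$. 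This computes the left-hand side.

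The right-hand side is where the main obstacle lies, because $\mathbf{X}\in\Omega_{+}(\mathbf{A})$ has larger dimension than $\mathbf{A}$ and carries $l$ principal angles with $\mathbf{B}$, so one must identify which of them to bound and with which shift. I would write an orthonormal basis of $\mathbf{X}$ as $X=[A,W]$ and set $C=X^\mathsf{T} A\in\V(k,l)$, noting that $A=XC$ precisely because $\mathbf{A}\subseteq\mathbf{X}$; then $A^\mathsf{T} B=C^\mathsf{T}(X^\mathsf{T} B)$ exhibits $A^\mathsf{T} B$ as a compression of $X^\mathsf{T} B$ by the co-isometry $C^\mathsf{T}$. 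The second interlacing inequality applied to $M=X^\mathsf{T} B\in\mathbb{R}^{l\times l}$ gives $\sigma_i(A^\mathsf{T} B)\ge\sigma_{i+(l-k)}(X^\mathsf{T} B)$, i.e.\ $\theta_{i+(l-k)}(\mathbf{X},\mathbf{B})\ge\theta_i(\mathbf{A},\mathbf{B})$ for $i=1,\dots,k$. Summing over the $k$ largest principal angles of $(\mathbf{X},\mathbf{B})$ then yields $d_{\Gr(l,n)}(\mathbf{B},\mathbf{X})^2\ge\sum_{j=l-k+1}^{l}\theta_j(\mathbf{X},\mathbf{B})^2\ge\sum_{i=1}^k\theta_i(\mathbf{A},\mathbf{B})^2$. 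For achievability I would take $\mathbf{X}=\operatorname{span}\{p_1,\dots,p_k,q_{k+1},\dots,q_l\}$, which contains $\mathbf{A}$ and has dimension $l$ because $p_i^\mathsf{T} q_j=0$ for $j>k$; a direct computation shows $(BV)^\mathsf{T} X=\left[\begin{smallmatrix}\Diag(\sigma_1,\dots,\sigma_k)&0\\0&I_{l-k}\end{smallmatrix}\right]$, so the principal angles between $\mathbf{B}$ and this $\mathbf{X}$ are $\theta_1,\dots,\theta_k,0,\dots,0$ and $d_{\Gr(l,n)}(\mathbf{B},\mathbf{X})=\bigl(\sum_{i=1}^k\theta_i^2\bigr)^{1/2}$. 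Pinning down the shifted index $i+(l-k)$ and using the correct direction of interlacing is the delicate point; everything else is bookkeeping with the \textsc{svd}.

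Finally, both sides of \eqref{eq:equal} equal $\bigl(\sum_{i=1}^k\theta_i(\mathbf{A},\mathbf{B})^2\bigr)^{1/2}$, which simultaneously establishes \eqref{eq:equal} and \eqref{eq:grassdist2}. Property (i) then follows because the principal angles $\theta_i(\mathbf{A},\mathbf{B})$ are invariant under the inclusions $\iota_{nm}$ (Lemma~\ref{lem:inclusion} and Corollary~\ref{cor:infty}), so $\delta(\mathbf{A},\mathbf{B})$ does not depend on $n$ and stabilizes once $n\ge l+1$; property (ii) is immediate since when $k=l$ the sets degenerate to $\Omega_{-}(\mathbf{B})=\{\mathbf{B}\}$ and $\Omega_{+}(\mathbf{A})=\{\mathbf{A}\}$, so $\delta(\mathbf{A},\mathbf{B})$ is literally the Grassmann distance $d_{\Gr(k,n)}(\mathbf{A},\mathbf{B})$ of \eqref{eq:grassdist1}.
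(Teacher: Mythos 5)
Your proof is correct, and it ties the two sides together by a genuinely different mechanism than the paper. You share the paper's two one-sided bounds: your achievability construction $\mathbf{X}=\operatorname{span}\{p_1,\dots,p_k,q_{k+1},\dots,q_l\}$ is exactly the paper's Lemma~\ref{lem:ineq1}, and your estimate $d_{\Gr(k,n)}(\mathbf{A},\mathbf{Y})\ge\delta(\mathbf{A},\mathbf{B})$ for $\mathbf{Y}\in\Omega_{-}(\mathbf{B})$ is the content of Corollary~\ref{cor:compare} and Lemma~\ref{lem:ineq2}, merely phrased multiplicatively via $A^\mathsf{T} Y=(A^\mathsf{T} B)C$ instead of via submatrices. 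The divergence is in how these one-sided bounds become equalities. The paper uses duality: the isometry $\Gr(k,n)\cong\Gr(n-k,n)$, $\mathbf{Y}\mapsto\mathbf{Y}^{\perp}$, carries $\Omega_{-}(\mathbf{B})$ to $\Omega_{+}(\mathbf{B}^{\perp})$, and the chain $\delta(\mathbf{A},\mathbf{B})\le d_{\Gr(k,n)}(\mathbf{A},\Omega_{-}(\mathbf{B}))=d_{\Gr(n-k,n)}(\mathbf{A}^{\perp},\Omega_{+}(\mathbf{B}^{\perp}))\le\delta(\mathbf{A}^{\perp},\mathbf{B}^{\perp})$ is squeezed shut by the Knyazev--Argentati fact $\delta(\mathbf{A},\mathbf{B})=\delta(\mathbf{A}^{\perp},\mathbf{B}^{\perp})$. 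You instead finish each side head-on: for $\Omega_{-}(\mathbf{B})$ you exhibit the explicit nearest point $\operatorname{span}\{q_1,\dots,q_k\}$ (which the paper only alludes to after the proof), and for $\Omega_{+}(\mathbf{A})$ you prove the missing lower bound via the shifted interlacing $\sigma_i(A^\mathsf{T} B)\ge\sigma_{i+(l-k)}(X^\mathsf{T} B)$, correctly noting that the $k$ largest of the $l$ principal angles of $(\mathbf{X},\mathbf{B})$ then dominate $\theta_1(\mathbf{A},\mathbf{B}),\dots,\theta_k(\mathbf{A},\mathbf{B})$ termwise --- an ingredient the paper never needs. Your route costs a slightly stronger matrix-analytic tool (two-sided Poincar\'e-type interlacing under row deletion, rather than only the one-sided domination of Proposition~\ref{prop:compare}), but it buys self-containedness: no appeal to the external majorization result of Knyazev--Argentati, no need to check that complementation is an isometry exchanging $\Omega_{-}$ and $\Omega_{+}$, and explicit minimizers on both Schubert varieties fall out as a by-product. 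Your handling of properties \eqref{indp} and \eqref{reduce} matches the paper's.
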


Rewriting \eqref{eq:equal} as
\[
\min_{\mathbf{X}\in \Omega_{+}(\mathbf{A})}d_{\Gr(l,n)}(\mathbf{X},\mathbf{B}) = \min_{\mathbf{Y} \in \Omega_{-}(\mathbf{B})} d_{\Gr(k,n)}(\mathbf{Y},\mathbf{A}),
\]
the equation says that the distance from $\mathbf{B}$ to the nearest $l$-dimensional subspace  that contains $\mathbf{A}$ equals the distance from $\mathbf{A}$ to the nearest $k$-dimensional subspace  contained in $\mathbf{B}$. This relation has several parallels. We will see that:
\begin{enumerate}[\upshape (a)]
\item the Grassmann distance may be replaced by any of the distances in Table~\ref{tab:distances} (see Theorem~\ref{thm:othermetrics});
\item `nearest' may be replaced by `furthest' and `min' above replaced by `max' when $n$ is sufficiently large (see Proposition~\ref{prop:prob4});
%\item `linear' may be replaced by  `affine' (see Section~\ref{sec:distaff});
\item `distance' may be replaced by `volume' with respect to the intrinsic uniform probability density on the Grassmannian (see Section~\ref{sec:volume}).
\end{enumerate}

We will prove Theorem~\ref{thm1} by way of the next two lemmas. 
\begin{lemma}\label{lem:ineq1}
Let $k \le l \le n$ be positive integers. Let $\delta :\Gr(k,n)\times \Gr(l,n)\to [0,\infty)$ be the function defined  by 
\[
\delta(\mathbf{A},\mathbf{B})=\Bigl(\sum\nolimits_{i=1}^k\theta_i^2\Bigr)^{1/2}
\]
where $\theta_i \coloneqq \theta_i(\mathbf{A},\mathbf{B})$, $i=1,\dots,k$. Then
\[
\delta(\mathbf{A},\mathbf{B})\ge d_{\Gr(l,n)}\bigl(\mathbf{B},\Omega_{+}(\mathbf{A})\bigr).
\]
\end{lemma}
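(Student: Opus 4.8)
The plan is to observe that the claimed inequality is a one-sided bound on a distance-to-a-set: since $d_{\Gr(l,n)}\bigl(\mathbf{B},\Omega_{+}(\mathbf{A})\bigr)$ is the minimum of $d_{\Gr(l,n)}(\mathbf{B},\mathbf{X})$ over all $l$-planes $\mathbf{X}$ containing $\mathbf{A}$, it suffices to exhibit a \emph{single} such $\mathbf{X}\in\Omega_{+}(\mathbf{A})$ with $d_{\Gr(l,n)}(\mathbf{B},\mathbf{X})\le\delta(\mathbf{A},\mathbf{B})$. I in fact expect to produce one achieving equality, which should later help pin down the exact value of $\delta$.

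To build this competitor I would work entirely with principal vectors. Fix orthonormal bases $A$ of $\mathbf{A}$ and $B$ of $\mathbf{B}$ and take the \textsc{svd} $A^\mathsf{T}B=U\Sigma V^\mathsf{T}$ as in \eqref{eq:svd}; since $k\le l$ we have $r=k$. Writing $AU=[p_1,\dots,p_k]$ and $BV=[q_1,\dots,q_l]$ for the principal vectors, note that $p_1,\dots,p_k$ is an orthonormal basis of $\mathbf{A}$ and $q_1,\dots,q_l$ one of $\mathbf{B}$. I would then set
\[
\mathbf{X}\coloneqq\operatorname{span}\{p_1,\dots,p_k,\,q_{k+1},\dots,q_l\},
\]
that is, augment the (rotated) basis of $\mathbf{A}$ by the $l-k$ ``leftover'' principal vectors of $\mathbf{B}$.

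The crucial identity is $(AU)^\mathsf{T}(BV)=\Sigma$, which shows that $p_i^\mathsf{T}q_j$ equals $\cos\theta_i$ when $i=j\le k$ and vanishes otherwise; in particular $p_i^\mathsf{T}q_j=0$ whenever $j>k$, because $\Sigma=[\Sigma_1\ 0]$ has only $k$ nonzero columns. This is exactly what makes the $l$ generators of $\mathbf{X}$ orthonormal, so $\mathbf{X}$ is an $l$-plane and visibly contains $\mathbf{A}$; hence $\mathbf{X}\in\Omega_{+}(\mathbf{A})$. To finish I would compute the principal angles between $\mathbf{B}$ and $\mathbf{X}$ from the matrix $[q_1,\dots,q_l]^\mathsf{T}[p_1,\dots,p_k,q_{k+1},\dots,q_l]$, which the same identity shows equals $\Diag(\cos\theta_1,\dots,\cos\theta_k,1,\dots,1)$. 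Its singular values are $\cos\theta_1,\dots,\cos\theta_k,1,\dots,1$, so by \eqref{eq:angles} the principal angles between $\mathbf{B}$ and $\mathbf{X}$ are $\theta_1,\dots,\theta_k$ together with $l-k$ zeros, and \eqref{eq:grassdist1} yields $d_{\Gr(l,n)}(\mathbf{B},\mathbf{X})=\bigl(\sum_{i=1}^k\theta_i^2\bigr)^{1/2}=\delta(\mathbf{A},\mathbf{B})$. Combined with the opening observation this gives the desired inequality.

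The only genuinely nonroutine step is guessing the right $\mathbf{X}$; everything after that is \textsc{svd} bookkeeping, the one load-bearing fact being the vanishing of the off-diagonal entries and of the final $l-k$ columns of $\Sigma$, which is precisely the $r=k$ structure forced by $k\le l$. I do not anticipate any deeper obstacle in this direction — the substantive reverse inequality, establishing that $\delta(\mathbf{A},\mathbf{B})$ cannot be beaten by any $\mathbf{X}\in\Omega_{+}(\mathbf{A})$, I expect to be handled by the companion lemma rather than this one.
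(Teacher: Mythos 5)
Your proposal is correct and takes essentially the same route as the paper: both exhibit the competitor $\mathbf{X}=\operatorname{span}\{p_1,\dots,p_k,q_{k+1},\dots,q_l\}$ built from the principal vectors and show $d_{\Gr(l,n)}(\mathbf{B},\mathbf{X})=\delta(\mathbf{A},\mathbf{B})$, which bounds the distance to $\Omega_{+}(\mathbf{A})$ from above. If anything, your verification via $(AU)^\mathsf{T}(BV)=\Sigma$ that the $l$ generators are orthonormal and that the cross-Gram matrix is $\Diag(\cos\theta_1,\dots,\cos\theta_k,1,\dots,1)$ is spelled out more carefully than in the paper, which leaves these facts implicit.
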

\begin{proof}
%By Proposition~\ref{prop:angles}, we may assume that $\mathbf{A} \cap \mathbf{B} = \{0 \}$. 
It suffices to find an $\mathbf{X}\in \Omega_{+}(\mathbf{A})$ such that $\delta(\mathbf{A},\mathbf{B})=d_{\Gr(l,n)}(\mathbf{X},\mathbf{B})$. Let $(p_1,q_1),\dots, (p_k,q_k)$  be the principal vectors between $\mathbf{A}$ and $\mathbf{B}$. We will extend $q_1,\dots,q_k$ into an orthonormal basis of $\mathbf{B}$ by appending appropriate orthonormal vectors $q_{k+1},\dots,q_l$. The principal angles are given by $\theta_i=\cos^{-1} p_i^\mathsf{T} q_i$, $\lVert p_i \rVert =\lVert q_i \rVert=1$. %, and $p_i^\mathsf{T} p_j=0=q_i^\mathsf{T} q_j$ for $j=1,2,\dots,i-1$.
If we take $\mathbf{X} \in \Gr(l,n)$ to be the subspace spanned by $p_1,\dots,p_k,q_{k+1},\dots, q_l$, then %$\mathbf{Y}=\mathbf{X}\cap\mathbf{B}$ is spanned by $q_{k+1},\dots,q_l$.
\begin{align}\label{eq:ineq1}
d_{\Gr(l,n)}(\mathbf{X},\mathbf{B})
&= [(\cos^{-1} p_1^\mathsf{T} q_1)^2 + \dots +  (\cos^{-1} p_k^\mathsf{T} q_k)^2 \nonumber \\
&\qquad  +  (\cos^{-1}  q_{k+1}^\mathsf{T} q_{k+1})^2 + \dots + (\cos^{-1}  q_l^\mathsf{T} q_l)^2 ]^{1/2}\\
&= [ \theta_1^2 + \dots +  \theta_k^2 + 0^2 + \dots + 0^2]^{1/2} = \delta(\mathbf{A},\mathbf{B}). \nonumber 
\end{align}
%\[
%d_{\Gr(l,n)}(\mathbf{X},\mathbf{B})=d_{\Gr(k,n)}(\mathbf{X}/\mathbf{Y},\mathbf{B}/\mathbf{Y})=d_{\Gr(k,n)}(\mathbf{A},\mathbf{B}/\mathbf{Y})=\delta(\mathbf{A},\mathbf{B}).
%\]
%The last equality is obtained by definition of principal angles.
\end{proof}

We state the following well-known fact \cite[Corollary~3.1.3]{Horn} for easy reference and deduce a corollary that will be useful for Lemma~\ref{lem:ineq2}.
\begin{proposition}\label{prop:compare}
Let $k\le l\le n$ be positive integers. Suppose $B \in \mathbb{R}^{n\times l}$ and $B_k \in \mathbb{R}^{n\times k}$ is a submatrix obtained by removing any $l-k$ columns from $B$. Then the $i$th singular values satisfy $\sigma_i(B_k)\le \sigma_i(B)$ for $i=1,\dots, k$.
\end{proposition}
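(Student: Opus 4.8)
The plan is to reduce this statement about singular values to a statement about eigenvalues of symmetric positive semidefinite matrices, where the Cauchy interlacing theorem applies directly. Recall that for any real matrix $M$ the squared singular values $\sigma_i(M)^2$ are exactly the eigenvalues of the Gram matrix $M^\mathsf{T} M$, listed in the same decreasing order. So it suffices to compare $\lambda_i(B_k^\mathsf{T} B_k)$ with $\lambda_i(B^\mathsf{T} B)$ for $i = 1, \dots, k$ and then take nonnegative square roots.

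The key observation is that $B_k^\mathsf{T} B_k$ is a principal submatrix of $B^\mathsf{T} B$. Writing $B = [b_1, \dots, b_l]$, the Gram matrix $B^\mathsf{T} B$ has entries $(B^\mathsf{T} B)_{ij} = b_i^\mathsf{T} b_j$; if $B_k$ retains the columns indexed by $S \subseteq \{1, \dots, l\}$ with $\lvert S \rvert = k$, then $B_k^\mathsf{T} B_k$ is precisely the $k \times k$ block of $B^\mathsf{T} B$ obtained by keeping the rows and columns in $S$. The Cauchy interlacing theorem for a Hermitian matrix and its principal submatrices (the content of the cited Horn corollary) then gives $\lambda_i(B_k^\mathsf{T} B_k) \le \lambda_i(B^\mathsf{T} B)$ for each $i = 1, \dots, k$; in fact it yields the sharper two-sided bound $\lambda_{i + l - k}(B^\mathsf{T} B) \le \lambda_i(B_k^\mathsf{T} B_k) \le \lambda_i(B^\mathsf{T} B)$, but only the upper inequality is needed here. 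Taking square roots gives $\sigma_i(B_k) \le \sigma_i(B)$.

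There is no serious obstacle; the only point requiring care is the bookkeeping that identifies $B_k^\mathsf{T} B_k$ as a genuine principal submatrix with matching row and column index set $S$, so that the interlacing theorem applies verbatim, together with the consistent matching of the decreasing orderings of singular values and of eigenvalues. As an alternative route that avoids invoking interlacing, one could argue directly from the Courant--Fischer min-max characterization $\sigma_i(B) = \max_{\dim V = i} \min_{0 \ne x \in V} \lVert Bx \rVert / \lVert x \rVert$, where $V$ ranges over $i$-dimensional subspaces of $\mathbb{R}^l$: deleting columns amounts to restricting the competing subspaces $V$ to lie inside the coordinate subspace $\mathbb{R}^S \subseteq \mathbb{R}^l$, so the outer maximum is taken over a smaller family and can only decrease. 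Either approach delivers the claim.
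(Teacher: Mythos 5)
Your proof is correct, but note that the paper does not actually prove this proposition: it is stated ``for easy reference'' as a well-known fact, with a citation to \cite[Corollary~3.1.3]{Horn} --- and that corollary is precisely the two-sided singular-value interlacing $\sigma_{i+l-k}(B) \le \sigma_i(B_k) \le \sigma_i(B)$ that you mention in passing. So what you have done is supply the proof behind the citation, and both of your routes are sound. The identification of $B_k^\mathsf{T} B_k$ as the principal submatrix of $B^\mathsf{T} B$ with row and column index set $S$ is correct; Cauchy interlacing for Hermitian matrices (deleting $l-k$ rows and columns) then gives $\lambda_i(B_k^\mathsf{T} B_k) \le \lambda_i(B^\mathsf{T} B)$ for $i=1,\dots,k$, and taking nonnegative square roots is legitimate since squaring is monotone on the nonnegative singular values. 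The Courant--Fischer variant, in which deleting columns restricts the competing subspaces $V$ to the coordinate subspace $\mathbb{R}^S \subseteq \mathbb{R}^l$ so that the outer maximum can only decrease, is equally valid and is essentially how the textbook result is established. One small attribution slip worth fixing: the ``content of the cited Horn corollary'' is not the Hermitian eigenvalue interlacing theorem (that is the classical Cauchy interlacing theorem, found in Horn and Johnson's \emph{Matrix Analysis} rather than \emph{Topics in Matrix Analysis}) but the singular-value statement you are in the process of proving, so as written your first route comes close to citing the proposition to prove itself; citing Cauchy interlacing by name, or falling back on your self-contained min-max argument, removes any circularity.
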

\begin{corollary}\label{cor:compare}
Let $B$ and $B_k$ be as in Proposition~\ref{prop:compare} and $\mathbf{B}$ and $\mathbf{B}_k$ be subspaces of $\mathbb{R}^n$ spanned by the column vectors of $B$ and $B_k$ respectively. Then for any subspace $\mathbf{A}$ of $\mathbb{R}^n$, the principal angles between the respective subspaces satisfy
\[
\theta_i(\mathbf{A},\mathbf{B})\le \theta_i(\mathbf{A},\mathbf{B}_k)
\]
for $i = 1,\dots,\min(\dim \mathbf{A}, \dim \mathbf{B}_k)$.
\end{corollary}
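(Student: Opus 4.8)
The plan is to deduce the angle inequality directly from the singular-value inequality in Proposition~\ref{prop:compare}, using the \textsc{svd} characterization of principal angles in \eqref{eq:svd}--\eqref{eq:angles}. The one subtlety is that principal angles are read off from \emph{orthonormal} bases, whereas the hypothesis only relates the (possibly non-orthonormal) matrices $B$ and $B_k$ by column deletion. I would therefore not work with the given columns at all, but only with the inclusion $\mathbf{B}_k = \operatorname{span}(B_k)\subseteq \operatorname{span}(B) = \mathbf{B}$, which is the sole geometric content of the hypothesis.

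First I would pick an orthonormal basis $\tilde B_k = [\tilde b_1,\dots,\tilde b_k]$ of $\mathbf{B}_k$ and, using $\mathbf{B}_k\subseteq\mathbf{B}$, extend it to an orthonormal basis $\tilde B = [\tilde b_1,\dots,\tilde b_k,\tilde b_{k+1},\dots,\tilde b_l]$ of $\mathbf{B}$, so that $\tilde B_k$ is obtained from $\tilde B$ by deleting its last $l-k$ columns. Fixing an orthonormal basis $A$ of $\mathbf{A}$, equations \eqref{eq:svd}--\eqref{eq:angles} give $\cos\theta_i(\mathbf{A},\mathbf{B}) = \sigma_i(A^\mathsf{T}\tilde B)$ and $\cos\theta_i(\mathbf{A},\mathbf{B}_k) = \sigma_i(A^\mathsf{T}\tilde B_k)$, with the angles ordered increasingly and the singular values decreasingly.

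Now $A^\mathsf{T}\tilde B_k$ is precisely $A^\mathsf{T}\tilde B$ with its last $l-k$ columns removed, so Proposition~\ref{prop:compare}, applied to the matrix $A^\mathsf{T}\tilde B$ (the interlacing of singular values under column deletion does not care that this matrix has $\dim\mathbf{A}$ rather than $n$ rows), yields $\sigma_i(A^\mathsf{T}\tilde B_k)\le\sigma_i(A^\mathsf{T}\tilde B)$ for $i=1,\dots,\min(\dim\mathbf{A},\dim\mathbf{B}_k)$. Since $\cos^{-1}$ is decreasing on $[0,1]$, this is exactly $\theta_i(\mathbf{A},\mathbf{B})\le\theta_i(\mathbf{A},\mathbf{B}_k)$ on that range, as claimed.

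The only real obstacle is the basis-compatibility step in the second paragraph: one must be sure that re-selecting orthonormal bases of $\mathbf{B}$ and $\mathbf{B}_k$ does not change the principal angles (which is immediate, since principal angles depend only on the subspaces) while still preserving a clean column-deletion relationship, so that Proposition~\ref{prop:compare} is genuinely applicable. Everything else is a one-line application of that proposition together with the monotonicity of $\cos^{-1}$.
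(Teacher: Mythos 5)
Your proof is correct and follows essentially the same route as the paper's: the paper's phrase ``by appropriate orthogonalization if necessary'' is exactly your step of choosing an orthonormal basis of $\mathbf{B}_k$ and extending it to one of $\mathbf{B}$, after which both arguments apply Proposition~\ref{prop:compare} to $A^\mathsf{T}B$ and its column-submatrix $A^\mathsf{T}B_k$ and invoke the monotonicity of $\cos^{-1}$ on $[0,1]$. Your explicit remark that the singular-value interlacing is insensitive to the number of rows of $A^\mathsf{T}\tilde B$ is a point the paper passes over silently, but it is a harmless and correct clarification.
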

\begin{proof}
By appropriate orthogonalization if necessary, we may assume that $B$ and its submatrix $B_k$ are orthonormal bases of $\mathbf{B}$ and $\mathbf{B}_k$. Let $A$ be an orthonormal basis of $\mathbf{A}$. Then $\sigma_i(A^\mathsf{T}B)$ and $\sigma_i(A^\mathsf{T}B_k)$ take values in $[0,1]$. Since $\theta_i(\mathbf{A},\mathbf{B})= \cos^{-1}(\sigma_i\bigl(A^\mathsf{T}B)\bigr)$ and $\cos^{-1}$  is monotone decreasing in $[0,1]$, the result follows from
$\sigma_i(A^\mathsf{T}B)\ge \sigma_i(A^\mathsf{T}B_k)$, by Proposition~\ref{prop:compare} applied to the submatrix $A^\mathsf{T}B_k$ of $A^\mathsf{T}B$.
\end{proof}
\begin{lemma}\label{lem:ineq2}
Let $\mathbf{A}$, $\mathbf{B}$ be as in Lemma~\ref{lem:ineq1}. Then
$\delta(\mathbf{A}, \mathbf{B})\le d_{\Gr(k,n)}\bigl(\mathbf{A},\Omega_{-}(\mathbf{B})\bigr)$.
\end{lemma}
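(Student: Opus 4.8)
The plan is to prove the inequality pointwise over $\Omega_{-}(\mathbf{B})$ and then pass to the infimum. It suffices to show that for \emph{every} $\mathbf{Y}\in\Omega_{-}(\mathbf{B})$ one has $\delta(\mathbf{A},\mathbf{B})\le d_{\Gr(k,n)}(\mathbf{A},\mathbf{Y})$, since taking the minimum over $\mathbf{Y}$ on the right then yields $\delta(\mathbf{A},\mathbf{B})\le d_{\Gr(k,n)}\bigl(\mathbf{A},\Omega_{-}(\mathbf{B})\bigr)$. Now both $\mathbf{A}$ and $\mathbf{Y}$ lie in $\Gr(k,n)$, so by the Grassmann distance formula \eqref{eq:grassdist1} the right-hand side equals $\bigl(\sum_{i=1}^{k}\theta_i(\mathbf{A},\mathbf{Y})^2\bigr)^{1/2}$, while by definition $\delta(\mathbf{A},\mathbf{B})=\bigl(\sum_{i=1}^{k}\theta_i(\mathbf{A},\mathbf{B})^2\bigr)^{1/2}$. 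Hence the whole lemma reduces to the principal-angle monotonicity
\[
\theta_i(\mathbf{A},\mathbf{B})\le\theta_i(\mathbf{A},\mathbf{Y}),\qquad i=1,\dots,k,
\]
after which summing the squares finishes the argument.

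To establish this monotonicity I would invoke Corollary~\ref{cor:compare}, whose hypothesis requires that $\mathbf{Y}$ be spanned by a column-subset of a basis of $\mathbf{B}$. The key step, and really the only substantive point, is to realize $\mathbf{Y}$ in exactly this form. Because $\mathbf{Y}\subseteq\mathbf{B}$, I can choose an orthonormal basis $y_1,\dots,y_k$ of $\mathbf{Y}$ and extend it to an orthonormal basis $y_1,\dots,y_k,y_{k+1},\dots,y_l$ of $\mathbf{B}$, which is possible since $\dim\mathbf{B}=l\ge k$. Setting $B=[y_1,\dots,y_l]$ and letting $B_k=[y_1,\dots,y_k]$ be the submatrix obtained by deleting the last $l-k$ columns, we have $\operatorname{span}(B)=\mathbf{B}$ and $\operatorname{span}(B_k)=\mathbf{Y}$. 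Corollary~\ref{cor:compare} then gives precisely $\theta_i(\mathbf{A},\mathbf{B})\le\theta_i(\mathbf{A},\mathbf{Y})$ for $i=1,\dots,\min(k,k)=k$, as needed.

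I expect no real obstacle beyond this basis-choice step, which converts the geometric containment $\mathbf{Y}\subseteq\mathbf{B}$ into the column-deletion setting where the singular-value (hence principal-angle) interlacing of Corollary~\ref{cor:compare} applies. It is worth contrasting the logic with Lemma~\ref{lem:ineq1}: there it sufficed to exhibit a \emph{single} competitor $\mathbf{X}\in\Omega_{+}(\mathbf{A})$ realizing the value $\delta(\mathbf{A},\mathbf{B})$, giving the one-sided bound $d_{\Gr(l,n)}\bigl(\mathbf{B},\Omega_{+}(\mathbf{A})\bigr)\le\delta(\mathbf{A},\mathbf{B})$; here the inequality points the other way and must hold against \emph{every} $\mathbf{Y}\in\Omega_{-}(\mathbf{B})$, which is exactly why a uniform monotonicity statement is the right tool rather than a single optimal construction. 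Lemma~\ref{lem:ineq1} and Lemma~\ref{lem:ineq2} together yield the chain $d_{\Gr(l,n)}\bigl(\mathbf{B},\Omega_{+}(\mathbf{A})\bigr)\le\delta(\mathbf{A},\mathbf{B})\le d_{\Gr(k,n)}\bigl(\mathbf{A},\Omega_{-}(\mathbf{B})\bigr)$, and running the same two arguments with the roles of $\Omega_{+}$ and $\Omega_{-}$ interchanged reverses the chain, pinning both distance-to-variety quantities to the common value $\delta(\mathbf{A},\mathbf{B})$ and establishing \eqref{eq:equal} of Theorem~\ref{thm1}.
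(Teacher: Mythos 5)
Your proof is correct and is essentially the paper's own argument: both reduce the lemma to the pointwise monotonicity $\theta_i(\mathbf{A},\mathbf{B})\le\theta_i(\mathbf{A},\mathbf{Y})$ via Corollary~\ref{cor:compare}, and your explicit step of extending an orthonormal basis of $\mathbf{Y}$ to one of $\mathbf{B}$ merely spells out what the paper compresses into ``we may write $\mathbf{Y}=\mathbf{B}_k$.'' One caveat on your closing remark (outside the lemma itself): the paper does not obtain Theorem~\ref{thm1} by simply rerunning the two lemmas with the roles of $\Omega_{+}$ and $\Omega_{-}$ interchanged --- for the bound $d_{\Gr(l,n)}\bigl(\mathbf{B},\Omega_{+}(\mathbf{A})\bigr)\ge\delta(\mathbf{A},\mathbf{B})$ the per-angle inequality from Corollary~\ref{cor:compare} points the wrong way --- but instead passes to orthogonal complements, which exchange $\Omega_{-}(\mathbf{B})$ with $\Omega_{+}(\mathbf{B}^{\perp})$, and invokes $\delta(\mathbf{A},\mathbf{B})=\delta(\mathbf{A}^{\perp},\mathbf{B}^{\perp})$ from \cite{Knyazev}.
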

\begin{proof}
Let $\mathbf{Y} \in \Omega_{-}(\mathbf{B})$. Then $\mathbf{Y}$ is a $k$-dimensional subspace contained in $\mathbf{B}$ and in the notation of Corollary~\ref{cor:compare}, we may write $\mathbf{Y} = \mathbf{B}_{k}$. By the same corollary we get
$\theta_i(\mathbf{A},\mathbf{B})\le \theta_i(\mathbf{A},\mathbf{Y})$
for $i = 1, \dots, k$. Hence
\begin{equation}\label{eq:ineq2}
\delta(\mathbf{A},\mathbf{B})=\Bigl(\sum\nolimits_{i=1}^{k}\theta_i(\mathbf{A},\mathbf{B})^2\Bigr)^{1/2} 
\le \Bigl(\sum\nolimits_{i=1}^{k}\theta_i(\mathbf{A},\mathbf{Y})^2\Bigr)^{1/2}=d_{\Gr(k,n)}(\mathbf{A},\mathbf{Y}). 
\end{equation}
The desired inequality follows since this holds for arbitrary $\mathbf{Y} \in \Omega_{-}(\mathbf{B})$.
\end{proof}

\begin{proof}[Proof of Theorem~\ref{thm1}]
Recall that Grassmannians satisfy an isomorphism 
\[
\Gr(k,n)\cong \Gr(n-k,n)
\]
that takes a $k$-plane $\mathbf{Y}$ to the $(n-k)$-plane $\mathbf{Y}^{\perp}$ of linear forms vanishing on $\mathbf{Y}$. It is easy to see that this isomorphism is an isometry. Using this isometric isomorphism, together with Lemma~\ref{lem:ineq1} and Lemma~\ref{lem:ineq2}, we can immediately deduce that
\[
\delta(\mathbf{A}, \mathbf{B}) \le d_{\Gr(k,n)}\bigl(\mathbf{A},\Omega_{-}\bigl(\mathbf{B})\bigr)=d_{\Gr(n-k,n)}\bigl(\mathbf{A^{\perp}},\Omega_{+}\bigl(\mathbf{B}^{\perp})\bigr)\le \delta(\mathbf{A}^{\perp},\mathbf{B}^{\perp}).
\]
On the other hand, by results in \cite{Knyazev}, we have  $\delta(\mathbf{A}, \mathbf{B}) = \delta(\mathbf{A}^{\perp},\mathbf{B}^{\perp})$
and hence
\[
\delta(\mathbf{A}, \mathbf{B}) = d_{\Gr(k,n)}\bigl(\mathbf{A},\Omega_{-}(\mathbf{B})\bigr).
\]
Similarly we can obtain
\[
\delta(\mathbf{A}, \mathbf{B}) = d_{\Gr(l,n)}\bigl(\mathbf{B},\Omega_{+}(\mathbf{A})\bigr).
\]
Hence we have the required equalities \eqref{eq:equal} and \eqref{eq:grassdist2} in Theorem~\ref{thm1}. Property~\eqref{reduce} is obvious from \eqref{eq:grassdist2} and Property~\eqref{indp} follows from Lemma~\ref{lem:infty}.
\end{proof}
The proof of Lemma~\ref{lem:ineq1} provides a simple way to find a point $\mathbf{X} \in \Omega_{+}(\mathbf{A})$ that realizes the distance $d_{\Gr(l,n)}\bigl(\mathbf{B},\Omega_{+}(\mathbf{A})\bigr)=\delta(\mathbf{A},\mathbf{B})$. Similarly we may explicitly determine a point $\mathbf{Y} \in \Omega_{-}(\mathbf{B})$ that realizes the distance $d_{\Gr(k,n)}\bigl(\mathbf{A},\Omega_{-}(\mathbf{B})\bigr)=\delta(\mathbf{A},\mathbf{B})$. %We state these in Algorithm~\ref{alg:nearest}.

One might wonder whether or not Theorem~\ref{thm1} still holds if we replace $d_{\Gr(k,n)}$ by other distance functions described in Table~\ref{tab:distances}. The answer is yes.
\begin{theorem}\label{thm:othermetrics}
Let $k \le l \le n$. Let $\mathbf{A}\in \Gr(k,n)$ and $\mathbf{B}\in\Gr(l,n)$. Then
\[
 d^*_{\Gr(k,n)}\bigl(\mathbf{A}, \Omega_{-}(\mathbf{B})\bigr) =  d^*_{\Gr(l,n)}\bigl(\mathbf{B}, \Omega_{+}(\mathbf{A})\bigr),
\]
for $* = \alpha, \beta, \kappa, \phi, \mu, \rho, \pi, \sigma$. Their common value $\delta^*(\mathbf{A},\mathbf{B})$ is given by:
%\begin{align*}
%\delta^{\alpha}(\mathbf{A}, \mathbf{B}) &=  \theta_k, &\delta^{\beta}(\mathbf{A}, \mathbf{B}) &= \Bigl(1 - \prod\nolimits_{i=1}^k\cos^2\theta_i\Bigr)^{1/2},
%&\delta^{\kappa}(\mathbf{A}, \mathbf{B}) &= \Bigl(\sum\nolimits_{i=1}^k\sin^2\theta_i\Bigr)^{1/2},\\
%\delta^{\pi}(\mathbf{A}, \mathbf{B}) &= \sin \theta_k, &\delta^{\mu}(\mathbf{A}, \mathbf{B}) &= \Bigl( \log \prod\nolimits_{i=1}^k\frac{1}{\cos^2 \theta_i}\Bigr)^{1/2}, & \delta^{\phi}(\mathbf{A}, \mathbf{B}) &= \cos^{-1}\bigl(\prod\nolimits_{i=1}^k\cos \theta_i\Bigr),\\
%\delta^{\sigma}(\mathbf{A}, \mathbf{B}) &= 2\sin (\theta_k/2), & \delta^{\rho}(\mathbf{A}, \mathbf{B}) &= \Bigl(2\sum\nolimits_{i=1}^k\sin^2(\theta_i/2)\Bigr)^{1/2}, &&
%\end{align*}
\begin{align*}
\delta^{\alpha}(\mathbf{A}, \mathbf{B}) &=  \theta_k, &
\delta^{\beta}(\mathbf{A},\mathbf{B}) &= \Bigl(1 - \prod\nolimits_{i=1}^k\cos^2\theta_i\Bigr)^{1/2},\\
\delta^{\kappa}(\mathbf{A}, \mathbf{B}) &= \Bigl(\sum\nolimits_{i=1}^k\sin^2\theta_i\Bigr)^{1/2}, &
\delta^{\phi}(\mathbf{A}, \mathbf{B}) &= \cos^{-1}\bigl(\prod\nolimits_{i=1}^k\cos \theta_i\Bigr),\\
\delta^{\mu}(\mathbf{A}, \mathbf{B}) &= \Bigl( \log \prod\nolimits_{i=1}^k\frac{1}{\cos^2 \theta_i}\Bigr)^{1/2}, &
\delta^{\rho}(\mathbf{A}, \mathbf{B}) &= \Bigl(2\sum\nolimits_{i=1}^k\sin^2(\theta_i/2)\Bigr)^{1/2},\\
\delta^{\pi}(\mathbf{A}, \mathbf{B}) &= \sin \theta_k, &
\delta^{\sigma}(\mathbf{A}, \mathbf{B}) &= 2\sin (\theta_k/2),
\end{align*}
or more generally with $\min(k,l)$ in place of the index $k$ when we do not require $k \le l$.
\end{theorem}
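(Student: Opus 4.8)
The plan is to mimic the proof of Theorem~\ref{thm1} almost verbatim, isolating the two features of the Grassmann distance that were actually used and checking that every distance in Table~\ref{tab:distances} shares them. Writing each $d^*$ as a function $f^*(\theta_1,\dots,\theta_r)$ of the principal angles ($r=\min(k,l)$), the two features are \emph{monotonicity} (each $f^*$ is non-decreasing in every $\theta_i$ on $[0,\pi/2]$) and \emph{zero-padding stability} ($f^*(\theta_1,\dots,\theta_r,0,\dots,0)=f^*(\theta_1,\dots,\theta_r)$). Both are immediate from the closed forms in the table, each of which is assembled from $\sum\sin^2\theta_i$, $\prod\cos\theta_i$, or $\max_i\theta_i=\theta_k$ --- all monotone in the $\theta_i$ and insensitive to appended zero entries.

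First I would prove the ``$\Omega_-$ side'', $d^*_{\Gr(k,n)}(\mathbf{A},\Omega_-(\mathbf{B}))=\delta^*(\mathbf{A},\mathbf{B})$, directly, with no appeal to complements. For the upper bound take $\mathbf{Y}=\operatorname{span}(q_1,\dots,q_k)\in\Omega_-(\mathbf{B})$, where $(p_i,q_i)$ are the principal vectors between $\mathbf{A}$ and $\mathbf{B}$; since $P^\mathsf{T}Q=\Diag(\cos\theta_1,\dots,\cos\theta_k)$ for $P=[p_1,\dots,p_k]$ and $Q=[q_1,\dots,q_k]$, the principal angles between $\mathbf{A}$ and $\mathbf{Y}$ are exactly $\theta_1,\dots,\theta_k$, whence $d^*_{\Gr(k,n)}(\mathbf{A},\mathbf{Y})=\delta^*(\mathbf{A},\mathbf{B})$. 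For the lower bound, any $\mathbf{Y}\in\Omega_-(\mathbf{B})$ is a $k$-plane inside $\mathbf{B}$, so Corollary~\ref{cor:compare} gives $\theta_i(\mathbf{A},\mathbf{B})\le\theta_i(\mathbf{A},\mathbf{Y})$ for every $i$, and monotonicity yields $\delta^*(\mathbf{A},\mathbf{B})\le d^*_{\Gr(k,n)}(\mathbf{A},\mathbf{Y})$. Minimizing over $\mathbf{Y}$ closes this half.

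Next I would reduce the ``$\Omega_+$ side'' to the case just proved through the orthogonal-complement isometry $\Gr(l,n)\cong\Gr(n-l,n)$, $\mathbf{Z}\mapsto\mathbf{Z}^\perp$, already invoked in the excerpt. Under it $\Omega_+(\mathbf{A})$ corresponds to $\Omega_-(\mathbf{A}^\perp)\subset\Gr(n-l,n)$ and $\mathbf{B}\mapsto\mathbf{B}^\perp$, so $d^*_{\Gr(l,n)}(\mathbf{B},\Omega_+(\mathbf{A}))=d^*_{\Gr(n-l,n)}(\mathbf{B}^\perp,\Omega_-(\mathbf{A}^\perp))$. As $\dim\mathbf{B}^\perp=n-l\le n-k=\dim\mathbf{A}^\perp$, the $\Omega_-$ case applies and evaluates this to $\delta^*(\mathbf{A}^\perp,\mathbf{B}^\perp)$. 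It then remains to show $\delta^*(\mathbf{A}^\perp,\mathbf{B}^\perp)=\delta^*(\mathbf{A},\mathbf{B})$: by the complementation relation for principal angles in \cite{Knyazev} the positive principal angles of $(\mathbf{A}^\perp,\mathbf{B}^\perp)$ coincide with those of $(\mathbf{A},\mathbf{B})$ --- a count shows both pairs have $k-\dim(\mathbf{A}\cap\mathbf{B})$ positive angles --- the remaining $\dim(\mathbf{A}^\perp\cap\mathbf{B}^\perp)$ angles being $0$, and by zero-padding stability these extra zeros do not change $f^*$. (Property~\eqref{indp} of Theorem~\ref{thm1} lets one enlarge $n$ at will, keeping the bookkeeping of zero angles transparent.) Combining the two sides gives the asserted equality of the two min-distances, with common value $\delta^*(\mathbf{A},\mathbf{B})=f^*(\theta_1,\dots,\theta_{\min(k,l)})$, which is the list of closed forms in the statement.

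I expect the only real obstacle to be the last step: pinning down, via \cite{Knyazev}, exactly which principal angles persist under complementation and verifying that every one of the eight distances is genuinely unaffected by the surviving zero angles (including the degenerate cases $\mathbf{A}\cap\mathbf{B}\ne 0$ and $\theta_k=\pi/2$). The monotonicity and the explicit optimal $\mathbf{X},\mathbf{Y}$ are routine once Theorem~\ref{thm1} is available.
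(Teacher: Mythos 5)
Your proposal is correct and is essentially the paper's own argument: the paper proves Theorem~\ref{thm:othermetrics} by observing that the proof of Theorem~\ref{thm1} --- the interlacing bound of Corollary~\ref{cor:compare}, an explicit principal-vector construction, and the orthogonal-complement isometry combined with the Knyazev result on principal angles of complements --- uses only principal angles, and this is exactly the machinery you re-run for each $d^*$, after making explicit the two properties (monotonicity in each angle and insensitivity to zero angles) that the paper leaves tacit. The only difference is organizational: you establish both bounds on the $\Omega_-$ side first (taking $\operatorname{span}(q_1,\dots,q_k)$ as the nearest point, so that $P^\mathsf{T}Q = \Diag(\cos\theta_1,\dots,\cos\theta_k)$ gives equality) and then transfer to $\Omega_+$ by complementation, whereas the paper's Lemma~\ref{lem:ineq1} places the explicit construction on the $\Omega_+$ side and closes both sides simultaneously with a complementation sandwich.
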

\begin{proof}
This follows from observing that our proof of Theorem~\ref{thm1} only involves principal angles between $\mathbf{A}$ and $\mathbf{B}$ and the diffeomorphism between $\Gr(k,n)$ and $\Gr(n-k,n)$ remains an isometry under these distances. In particular, both \eqref{eq:ineq1} and \eqref{eq:ineq2} still hold with any of these distances in place of the Grassmann distance.
\end{proof}
We will see in Section~\ref{sec:existing} that the projection distance $\delta^\pi$ in Theorem~\ref{thm:othermetrics} is equivalent to the containment gap, a measure of distance between subspaces of different dimensions originally proposed in operator theory \cite{Kato}.
%
%We end this section with a remark about the complexity of computing $\delta^*$, which falls under the general problem of computing distance of a point to a subvariety in a Grassmannian $\Gr(k,n)$. For the special case of the Euclidean space $\mathbb{R}^n = \Gr(0,n)$, the problem often arise in applications \cite{EHOST, OSS}. Nonetheless there are abundant examples of simple varieties where the problem  is intractable: E.g., for a $3$-factor Segre variety, the problem is NP-hard in the Cook--Karp--Levin sense \cite{HL}; for general varieties, it is at least as hard as deciding Hilbert Nullstellensatz, which is NP-complete in the Blum--Shub--Smale sense \cite{BCSS, BSS}. Having a Grassmannian instead of a Euclidean space as the ambient space further complicates the problem since distances in Grassmannians require more effort to compute than Euclidean distance (i.e., $l^2$-norm).  It is therefore somewhat surprising that all the distances in Theorems~\ref{thm1} and \ref{thm:othermetrics} can be readily computed in polynomial time to any fixed accuracy via the \textsc{svd}.

%%%%
\section{Grassmannian of subspaces of all dimensions}\label{sec:dinfty}

We view the equality of $d_{\Gr(k,n)}\bigl(\mathbf{A},\Omega_{-}(\mathbf{B})\bigr)$ and $d_{\Gr(l,n)}\bigl(\mathbf{B},\Omega_{+}(\mathbf{A})\bigr)$ as the strongest evidence that their common value $\delta(\mathbf{A}, \mathbf{B})$ provides the most natural notion of distance between subspaces of different dimensions. As we pointed out earlier, $\delta$ is a distance in the sense of a distance from a point to a set, but not a distance in the sense of a metric on the set of all subspaces of all dimensions. For instance, $\delta$ does not satisfy the separation property: $\delta(\mathbf{A}, \mathbf{B}) =0$ for any $\mathbf{A} \subsetneq \mathbf{B}$. In fact, it is easy to observe the following.
\begin{lemma}\label{lem:zero}
Let $\mathbf{A} \in \Gr(k,n)$ and $\mathbf{B} \in \Gr(l,n)$. Then $\delta(\mathbf{A},\mathbf{B}) = 0$ iff $\mathbf{A} \subseteq \mathbf{B}$ or $\mathbf{B} \subseteq \mathbf{A}$.
\end{lemma}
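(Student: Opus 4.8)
The plan is to prove the biconditional in \eqref{eq:grassdist2}'s spirit by directly characterizing when $\delta(\mathbf{A},\mathbf{B}) = 0$ through its explicit formula. By Theorem~\ref{thm1}\eqref{explicit}, we have
\[
\delta(\mathbf{A},\mathbf{B}) = \Bigl(\sum\nolimits_{i=1}^{\min(k,l)} \theta_i(\mathbf{A},\mathbf{B})^2\Bigr)^{1/2},
\]
so $\delta(\mathbf{A},\mathbf{B}) = 0$ if and only if every principal angle vanishes, i.e.\ $\theta_i(\mathbf{A},\mathbf{B}) = 0$ for all $i = 1,\dots,\min(k,l)$. This reduces the entire lemma to an elementary statement about principal angles, which is the cleanest route since the hard analytic content is already packaged in Theorem~\ref{thm1}.

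First I would assume without loss of generality that $k \le l$, so $\min(k,l) = k$. The key observation is that $\theta_i(\mathbf{A},\mathbf{B}) = 0$ for all $i = 1,\dots,k$ is exactly the condition that all $k$ cosines $\cos\theta_i = \sigma_i(A^\mathsf{T}B)$ equal $1$. By Proposition~\ref{prop:angles} (taking $m = k$, the case where every principal angle is zero), this holds if and only if $\mathbf{A} \cap \mathbf{B} = \operatorname{span}\{p_1,\dots,p_k\}$ has dimension $k$, which forces $\mathbf{A} \subseteq \mathbf{B}$ since $\dim \mathbf{A} = k$. Conversely, if $\mathbf{A} \subseteq \mathbf{B}$ then each unit vector $p_i \in \mathbf{A}$ already lies in $\mathbf{B}$, so one may take $q_i = p_i$ in the principal-vector optimization, giving $\cos\theta_i = p_i^\mathsf{T} q_i = 1$ and hence $\theta_i = 0$ for all $i$. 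Thus $\delta(\mathbf{A},\mathbf{B}) = 0 \iff \mathbf{A} \subseteq \mathbf{B}$ in the case $k \le l$.

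Finally I would note that the roles of $\mathbf{A}$ and $\mathbf{B}$ are symmetric in $\delta$: the principal angles $\theta_i(\mathbf{A},\mathbf{B})$ are defined symmetrically (indeed $\delta(\mathbf{A},\mathbf{B})$ depends only on the shared cosines $\sigma_i(A^\mathsf{T}B) = \sigma_i(B^\mathsf{T}A)$), so in the case $l \le k$ the same argument gives $\delta(\mathbf{A},\mathbf{B}) = 0 \iff \mathbf{B} \subseteq \mathbf{A}$. Combining the two cases yields $\delta(\mathbf{A},\mathbf{B}) = 0$ if and only if $\mathbf{A} \subseteq \mathbf{B}$ or $\mathbf{B} \subseteq \mathbf{A}$, as claimed.

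I do not expect a serious obstacle here; the statement is labeled ``easy to observe'' precisely because Theorem~\ref{thm1}\eqref{explicit} already does the work. The only point requiring a little care is the containment direction of the principal-angle characterization, where I must invoke Proposition~\ref{prop:angles} correctly to conclude that $\dim(\mathbf{A}\cap\mathbf{B}) = k$ together with $\dim\mathbf{A} = k$ forces $\mathbf{A}\subseteq\mathbf{B}$ rather than merely $\mathbf{A}\cap\mathbf{B}\ne\{0\}$.
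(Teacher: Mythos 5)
Your proof is correct and is essentially the argument the paper intends: the paper states Lemma~\ref{lem:zero} without proof, as an immediate consequence of the explicit formula in Theorem~\ref{thm1}\eqref{explicit}, which is exactly the reduction you make. One small caveat: Proposition~\ref{prop:angles} is stated only for $m < r$ (it requires a strict drop $\cos\theta_m > \cos\theta_{m+1}$), so it does not literally cover your invocation with $m = k = r$; this is harmless, however, since $\cos\theta_i = p_i^\mathsf{T} q_i = 1$ for unit vectors forces $p_i = q_i$ by the equality case of Cauchy--Schwarz, whence $\mathbf{A} = \operatorname{span}\{p_1,\dots,p_k\} \subseteq \mathbf{B}$ directly, without appealing to that proposition at all.
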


$\delta$ also does not satisfy the triangle inequality: For a line $\mathbf{L}$ not contained in a subspace $\mathbf{A}$, the triangle inequality, if true, would imply
\begin{align*}
\delta(\mathbf{L},\mathbf{A})&=\delta(\mathbf{L},\mathbf{A})+\delta(\mathbf{A},\mathbf{B})\ge \delta(\mathbf{L},\mathbf{B}),\\
\delta(\mathbf{L},\mathbf{B})&=\delta(\mathbf{L},\mathbf{B})+\delta(\mathbf{A},\mathbf{B})\ge \delta(\mathbf{L},\mathbf{A}),
\end{align*}
giving $\delta(\mathbf{L},\mathbf{A})=\delta(\mathbf{L},\mathbf{B})$ for any subspace $\mathbf{B}$, which is evidently false by Lemma~\ref{lem:zero} (e.g.\ take $\mathbf{B} = \mathbf{A} \oplus  \mathbf{L}$).

These observations also apply verbatim to all the other similarly-defined distances $\delta^*$ in Theorem~\ref{thm:othermetrics}, i.e., none of them are metrics.

The set of all subspaces of all dimensions is parameterized by $\Gr(\infty, \infty)$, the \emph{doubly infinite Grassmannian}  \cite{FH}, which may be viewed informally as the disjoint union of all $k$-dimensional subspaces\footnote{As discussed in Section~\ref{sec:infty}, these are independent of the dimension of their ambient space and may be viewed as an element of the infinite Grassmannian $\Gr(k,\infty)$.} over all $k \in \mathbb{N}$,
\[
\Gr(\infty, \infty)=\coprod\nolimits_{k=1}^{\infty} \Gr(k,\infty).
\]
To define a metric between any pair of subspaces of arbitrary dimensions is to define one on $\Gr(\infty,\infty)$. It is easy to define metrics on $\Gr(\infty,\infty)$ that bear little relation to the geometry of Grassmannian but we will propose one in Section~\ref{sec:metric} that is consistent with  $\delta$ and with $d_{\Gr(k,n)}$ for all $k \le n$.

We will require the formal definition of $\Gr(\infty, \infty)$, namely, it is the direct limit of the direct system of Grassmannians $\{ \Gr(k,n) : (k,n) \in \mathbb{N} \times \mathbb{N} \}$ with inclusion maps $\iota^{kl}_{nm} : \Gr(k,n) \to \Gr(l,m)$ for all $k \le l$ and $n \le m$ such that $l-k\le m-n$.
For $A \in \mathbb{R}^{n \times k}$ with orthonormal columns, the embedding is given by
\begin{equation}\label{eq:epsilon}
\iota^{kl}_{nm} :\Gr(k,n) \to \Gr(l,m), \qquad
\operatorname{span} (A) \mapsto
\operatorname{span} \left(
\begin{bmatrix}
A & 0\\
0 & 0\\
0 &  I_{l-k} 
\end{bmatrix}\right), 
\end{equation}
where $I_{l-k} \in \mathbb{R}^{(l -k) \times (l -k)}$ is an identity matrix and we have $(m-n) - (l -k)$ zero rows in the middle so that the $3 \times 2$ block matrix is in  $\mathbb{R}^{m \times l}$. Note that for a fixed $k$, $\iota^{kk}_{nm} $ reduces to $\iota_{nm}$ in \eqref{eq:iota}.

Since our distance $\delta(\mathbf{A},\mathbf{B})$ is defined for subspaces $\mathbf{A}$ and $\mathbf{B}$ of all dimensions, it defines a function $\delta:\Gr(\infty,\infty)\times \Gr(\infty,\infty)\to \mathbb{R}$ that is a \emph{premetric} on $\Gr(\infty,\infty)$, i.e., $\delta(\mathbf{A},\mathbf{B})\ge 0$ and $\delta(\mathbf{A},\mathbf{A})=0$ for all $\mathbf{A},\mathbf{B} \in \Gr(\infty,\infty)$. This in turn defines a topology $\tau $ on $\Gr(\infty,\infty)$ in a standard way: The $\varepsilon$-ball centered at $\mathbf{A}$ is
\[
B_\varepsilon(\mathbf{A})\coloneqq\{ \mathbf{X}\in \Gr(\infty,\infty) : \delta(\mathbf{A},\mathbf{X})< \varepsilon \},
\]
and $U \subseteq \Gr(\infty,\infty)$ is defined to be open if for any $\mathbf{A}\in U$, there is an $\varepsilon$-ball $B_\varepsilon(\mathbf{A}) \subseteq U$. The topology $\tau$ is consistent with the usual topology of Grassmannians (but it is not the disjoint union topology). If we restrict $\tau$ to $\Gr(k,\infty)$, then the subspace topology is the same as the topology induced by the metric $d_{\Gr(k,\infty)}$ on $\Gr(k,\infty)$ as defined in Section~\ref{sec:infty}. Nevertheless this apparently natural topology on $\Gr(\infty,\infty)$ turns out to be a strange one.
\begin{proposition}\label{prop:tau}
The topology $\tau$ on $\Gr(\infty,\infty)$ is non-Hausdorff and therefore non-metrizable.
\end{proposition}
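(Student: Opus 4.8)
The plan is to establish non-Hausdorffness directly by exhibiting two distinct points of $\Gr(\infty,\infty)$ that admit no disjoint open neighborhoods; non-metrizability is then automatic, since every metrizable space is Hausdorff. The obstruction to the Hausdorff property is precisely the failure of the separation axiom for the premetric $\delta$ recorded in Lemma~\ref{lem:zero}: there exist distinct subspaces at $\delta$-distance zero, namely any pair in which one is properly contained in the other.

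Concretely, I would fix a line $\mathbf{A}$ and a plane $\mathbf{B}$ with $\mathbf{A} \subsetneq \mathbf{B}$, so that $\mathbf{A} \ne \mathbf{B}$ as points of $\Gr(\infty,\infty)$. By Lemma~\ref{lem:zero} we have $\delta(\mathbf{A},\mathbf{B}) = 0$, and since $\delta$ is symmetric (the principal angles $\theta_i(\mathbf{A},\mathbf{B})$ are symmetric in their two arguments) we also have $\delta(\mathbf{B},\mathbf{A}) = 0$. The key observation is then that $\mathbf{B} \in B_\varepsilon(\mathbf{A})$ and $\mathbf{A} \in B_\varepsilon(\mathbf{B})$ for every $\varepsilon > 0$, simply because $0 = \delta(\mathbf{A},\mathbf{B}) < \varepsilon$.

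To finish, recall that by the definition of $\tau$ any open set $U$ containing $\mathbf{A}$ must contain some ball $B_\varepsilon(\mathbf{A})$, whence $\mathbf{B} \in B_\varepsilon(\mathbf{A}) \subseteq U$. Thus every open neighborhood of $\mathbf{A}$ contains $\mathbf{B}$, and by the symmetric argument every open neighborhood of $\mathbf{B}$ contains $\mathbf{A}$. Consequently there can be no disjoint open sets $U \ni \mathbf{A}$ and $V \ni \mathbf{B}$: any such $U$ already contains $\mathbf{B}$, forcing $\mathbf{B} \in U \cap V \ne \varnothing$. Hence $\tau$ is non-Hausdorff, and therefore non-metrizable.

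The computation is essentially trivial; the only point requiring care is to invoke the definition of $\tau$ in the correct direction, using that every open neighborhood swallows a $\delta$-ball rather than asserting that the $\delta$-balls are themselves open (which need not hold for a mere premetric). The real content is conceptual: the containment-induced zeros of $\delta$ from Lemma~\ref{lem:zero} render a line and a plane containing it topologically indistinguishable, and this is exactly what defeats the Hausdorff property.
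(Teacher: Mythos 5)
Your proof is correct and takes essentially the same approach as the paper: both deduce non-Hausdorffness from Lemma~\ref{lem:zero} by observing that a proper containment $\mathbf{A} \subsetneq \mathbf{B}$ yields $\delta(\mathbf{A},\mathbf{B}) = 0$, so the two points cannot be separated by open sets, and then invoke the fact that metrizable spaces are Hausdorff. Your version simply spells out the ball-chasing that the paper leaves implicit, including the correct caution that the argument needs only that every open set absorbs a $\delta$-ball around each of its points, not that the balls themselves are open.
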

\begin{proof} $\tau $ is not Hausdorff since it is not possible to separate $\mathbf{A}\subsetneq \mathbf{B}$ by open subsets, as we saw in Lemma~\ref{lem:zero}. Metrizable spaces are necessarily Hausdorff.
\end{proof}

Even though $\tau$ restricts to the metric space topology on $\Gr(k,\infty)$ induced by the Grassmann distance $d_{\Gr(k,\infty)}$ for every $k \in \mathbb{N}$, it is not itself a metric space topology. We view this as a consequence of a more general phenomenon, namely, the category $\boldsymbol{\mathsf{Met}}$ of metric spaces (objects) and continuous contractions (morphisms) has no coproduct, i.e., given a collection of metric spaces, there is in general no metric space that will behave like the disjoint union of the collection of metric spaces. To see this, take metric spaces $(X_1,d_1)$ and $(X_2,d_2)$ where $X_1 = \{ x_1\}$, $X_2 = \{x_2\}$. Suppose a coproduct $(X,d)$ of $(X_1,d_1)$ and $(X_2,d_2)$ exists. Let $Y=\{y_1,y_2\}$ and let $d_Y$ be the metric on $Y$ induced by $d_Y(y_1,y_2)=2 d(x_1,x_2)\ne 0$. Now define $\varphi_i: X_i\to Y$ by $\varphi_i(x_i)=y_i$, $i=1,2$. One sees that no morphism $\varphi:X\to Y$ in $\boldsymbol{\mathsf{Met}}$ is compatible with $\varphi_1$ and $\varphi_2$, contradicting the assumption that $X$ is the coproduct of $X_1$ and $X_2$.

If we instead look at the category of metric spaces with continuous or uniformly continuous maps as morphisms, then coproducts always exist \cite{Helemskii}.  In Section~\ref{sec:metric}, we will relax our requirement and construct a metric $d_{\Gr(\infty,\infty)}$ on $\Gr(\infty,\infty)$ that restricts to $d_{\Gr(k,\infty)}$ for all $k \in \mathbb{N}$ but without requiring that it comes from a coproduct of $\{(\Gr(k,\infty), d_{\Gr(k,\infty)}) : k \in \mathbb{N}\}$ in $\boldsymbol{\mathsf{Met}}$. 

\section{Metrics for subspaces of all dimensions}\label{sec:metric}

We will describe a simple recipe for turning the distances $\delta^*$ in Theorem~\ref{thm:othermetrics} into metrics on $\Gr(\infty,\infty)$. 
Suppose $k \le l$ and we have $\mathbf{A} \in \Gr(k,n)$ and $\mathbf{B} \in \Gr(l,n)$. In this case there are $k$ principal angles between $\mathbf{A}$ and $\mathbf{B}$, $\theta_1,\dots, \theta_k$, as defined in \eqref{eq:angles}. First we will set
$\theta_{k+1} = \dots = \theta_l = \pi/2$.
Then we take the Grassmann distance $\delta$ or any of the distances $\delta^*$  in Theorem~\ref{thm:othermetrics}, replace the index $k$ by $l$, and call the resulting expressions $d_{\Gr(\infty, \infty)}(\mathbf{A}, \mathbf{B})$ (for Grassmann distance) and $d^*_{\Gr(\infty, \infty)}(\mathbf{A}, \mathbf{B})$ (for other distances) respectively.

When $n$ is sufficiently large, setting $\theta_{k+1},\dots,\theta_l$ all equal to $\pi/2$ is equivalent to completing $\mathbf{A}$ to an $l$ dimensional subspace of $\mathbb{R}^n$, by adding $l-k$ vectors orthonormal to the subspace $\mathbf{B}$. Hence the distance between $\mathbf{A}$ and $\mathbf{B}$ is defined by the distance function on the Grassmannian $\Gr(l,n)$. We show in Proposition~\ref{prop:metric} that these expressions will indeed define metrics on $\Gr(\infty,\infty)$.

Applying the above recipe to the Grassmann, chordal, and Procrustes distances yield the \emph{Grassmann}, \emph{chordal}, and \emph{Procrustes metrics} on $\Gr(\infty,\infty)$ given in Table~\ref{tab:metrics}.
\begin{table*}[ht]
\caption{Metrics on $\Gr(\infty,\infty)$ in terms of principal angles.}
\label{tab:metrics}
\centering
\vspace*{-1.5ex}
\renewcommand{\arraystretch}{1.75}
\begin{tabular}{ll}
Grassmann metric & $d_{\Gr(\infty, \infty)}(\mathbf{A}, \mathbf{B}) = \Bigl(\lvert k - l \rvert\pi^2/4 + \sum\nolimits_{i=1}^{\min(k,l)}\theta_i^2\Bigr)^{1/2}$\\
Chordal metric & $d_{\Gr(\infty, \infty)}^{\kappa}(\mathbf{A}, \mathbf{B}) = \Bigl(\lvert k - l \rvert + \sum\nolimits_{i=1}^{\min(k,l)} \sin^2\theta_i\Bigr)^{1/2}$ \\
Procrustes metric & $d_{\Gr(\infty, \infty)}^{\rho}(\mathbf{A}, \mathbf{B}) =\Bigl(\lvert k - l \rvert + 2\sum\nolimits_{i=1}^{\min(k,l)}\sin ^2(\theta_i/2)\Bigr)^{1/2}$
\end{tabular}
\end{table*}

Evidently  the metrics in Table~\ref{tab:metrics} are all of the form
\begin{equation}\label{eq:rms}
d^*_{\Gr(\infty,\infty)} (\mathbf{A},\mathbf{B}) = \sqrt{\delta^*(\mathbf{A}, \mathbf{B})^2 + c_*^2 \epsilon(\mathbf{A},\mathbf{B})^2},
\end{equation}
where $\epsilon(\mathbf{A},\mathbf{B}) \coloneqq \lvert \dim \mathbf{A} - \dim \mathbf{B} \rvert^{1/2}$.
On the other hand, applying the above recipe to other distances in Table~\ref{tab:distances} yield the \emph{Asimov}, \emph{Binet--Cauchy}, \emph{Fubini--Study}, \emph{Martin}, \emph{projection}, and \emph{spectral metrics} on $\Gr(\infty,\infty)$ given by
\begin{equation}\label{eq:indicator}
d_{\Gr(\infty, \infty)}^{*}(\mathbf{A}, \mathbf{B})  \\
 =
\begin{cases}
d_{\Gr(k,\infty)}^* (\mathbf{A}, \mathbf{B}) & \text{if } \dim \mathbf{A} = \dim \mathbf{B} = k,\\
c_* & \text{if } \dim \mathbf{A} \ne \dim \mathbf{B},
\end{cases}
\end{equation}
for  $ * = \alpha, \beta, \phi, \mu, \pi, \sigma$, respectively. The constants $c_* > 0$ can be seen to be
\[
c = c_\alpha = \pi/2, \quad  c_\sigma = \sqrt{2}, \quad c_\mu = \infty,\quad
 c_\beta = c_\phi = c_\pi = c_\kappa = c_\rho = 1.
\]
In all cases, for subspaces $\mathbf{A}$ and $\mathbf{B}$ of equal dimension $k$, these metrics on $\Gr(\infty,\infty)$ restrict to the corresponding ones on $\Gr(k,\infty)$, i.e.,
\[
d^*_{\Gr(\infty, \infty)}(\mathbf{A}, \mathbf{B}) = d^*_{\Gr(k, \infty)}(\mathbf{A}, \mathbf{B}),
\]
where the latter is as described in Corollary~\ref{cor:infty} and Lemma~\ref{lem:inclusion}. These metrics on  $\Gr(\infty,\infty)$ are the amalgamation of two pieces of information, the distance $\delta^*(\mathbf{A},\mathbf{B})$ and the difference in dimensions $\lvert \dim \mathbf{A} - \dim \mathbf{B} \rvert$, either via a root mean square or an indicator function.

The Grassmann metric has a natural interpretation (see  Proposition~\ref{prop:prob4}):
\begin{quote}
\emph{$d_{\Gr(\infty,\infty)} (\mathbf{A},\mathbf{B})$ is the distance  from $\mathbf{B}$ to the furthest $l$-dimensional subspace that contains $\mathbf{A}$, which equals the distance  from $\mathbf{A}$ to the furthest $k$-dimensional subspace contained in $\mathbf{B}$.}
\end{quote}
The chordal metric in Table~\ref{tab:metrics} is equivalent to the symmetric directional distance, a metric on subspaces of different dimensions  \cite{SWF, WWF}  popular in machine learning  \cite{bar, ref2a, motion, ref2b, eeg, mech, blog, ref2c, econ, network, face, ref2d, text} (see Section~\ref{sec:existing}). 

\begin{proposition}\label{prop:metric}
The expressions in Table~\ref{tab:metrics} and \eqref{eq:indicator} are metrics on $\Gr(\infty,\infty)$.
\end{proposition}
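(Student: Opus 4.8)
The plan is to verify the three metric axioms (non-negativity and separation, symmetry, and the triangle inequality) for each of the expressions, treating the two structural forms separately. For the root-mean-square metrics of the form \eqref{eq:rms}, namely the Grassmann, chordal, and Procrustes metrics, I would argue as follows. Non-negativity is immediate, and symmetry holds because both $\delta^*(\mathbf{A},\mathbf{B})$ and $\epsilon(\mathbf{A},\mathbf{B}) = \lvert \dim\mathbf{A} - \dim\mathbf{B}\rvert^{1/2}$ are manifestly symmetric in $\mathbf{A}$ and $\mathbf{B}$. For separation, if $\dim\mathbf{A} \ne \dim\mathbf{B}$ then $\epsilon(\mathbf{A},\mathbf{B}) > 0$ forces $d^*_{\Gr(\infty,\infty)}(\mathbf{A},\mathbf{B}) > 0$; if $\dim\mathbf{A} = \dim\mathbf{B} = k$ then the expression collapses to the genuine metric $d^*_{\Gr(k,\infty)}$ on a single Grassmannian, which already separates points. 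The substantive work is the triangle inequality.

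The key idea for the triangle inequality is to view each of these metrics as the Euclidean norm of a vector whose components split into a ``principal-angle part'' and a ``dimension part.'' Concretely, after the recipe of setting $\theta_{k+1} = \dots = \theta_l = \pi/2$, one can regard $d^*_{\Gr(\infty,\infty)}(\mathbf{A},\mathbf{B})$ as $\lVert (u, v)\rVert_2$ where $u$ encodes the per-angle contributions and $v$ encodes $c_*\epsilon(\mathbf{A},\mathbf{B})$. I would first establish the triangle inequality for the scalar dimension term: the map $(\mathbf{A},\mathbf{B}) \mapsto \lvert \dim\mathbf{A} - \dim\mathbf{B}\rvert^{1/2}$ is itself a metric on dimensions, since $\lvert\dim\mathbf{A}-\dim\mathbf{C}\rvert \le \lvert\dim\mathbf{A}-\dim\mathbf{B}\rvert + \lvert\dim\mathbf{B}-\dim\mathbf{C}\rvert$ and taking square roots preserves subadditivity (as $\sqrt{s+t}\le\sqrt{s}+\sqrt{t}$). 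Granting that the principal-angle part $\delta^*$ also satisfies a triangle inequality when both subspaces are completed to the larger dimension, the full inequality then follows from the elementary fact that if $d_1$ and $d_2$ are metrics on a common set then $(d_1^2 + d_2^2)^{1/2}$ is again a metric — a consequence of the Minkowski inequality in $\mathbb{R}^2$ applied to the vectors $(d_1(x,y), d_2(x,y))$ and $(d_1(y,z), d_2(y,z))$.

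The genuinely delicate point, which I expect to be the main obstacle, is justifying the triangle inequality for the angle part $\delta^*$ across subspaces of genuinely different dimensions. The clean way to handle this is the observation recorded after \eqref{eq:rms}: setting the extra angles to $\pi/2$ is, for $n$ large, exactly the same as isometrically completing $\mathbf{A}$ to an $l$-dimensional (or more generally a $\max$-dimensional) subspace inside a large ambient $\mathbb{R}^n$ by appending vectors orthogonal to the other subspace. Thus I would embed all three subspaces $\mathbf{A},\mathbf{B},\mathbf{C}$ into a common Grassmannian $\Gr(N,\infty)$ for $N = \max(\dim\mathbf{A},\dim\mathbf{B},\dim\mathbf{C})$ via the inclusions \eqref{eq:epsilon}, using Lemma~\ref{lem:infty} and Lemma~\ref{lem:inclusion} to ensure the relevant distances are preserved and independent of $n$. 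On that single genuine Grassmannian, $d^*_{\Gr(N,\infty)}$ is an honest metric and its triangle inequality is already known; the only thing to check is that the completion realizes the $\pi/2$-padding consistently for all three pairs simultaneously, which is where one must be careful that the padded angles interact correctly.

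For the indicator-type metrics \eqref{eq:indicator} (Asimov, Binet--Cauchy, Fubini--Study, Martin, projection, spectral), the argument is more combinatorial and, I expect, easier. Here symmetry and non-negativity are again immediate, and separation holds because $c_* > 0$. For the triangle inequality I would do a short case analysis on the equidimensionality pattern of the triple $(\mathbf{A},\mathbf{B},\mathbf{C})$: if all three have the same dimension, the inequality reduces to that of the honest metric $d^*_{\Gr(k,\infty)}$; if exactly one pair is equidimensional, one must check that the constant $c_*$ dominates any single within-Grassmannian distance, which holds precisely because $c_*$ is chosen as an upper bound (indeed the supremum or a valid over-estimate) of $d^*_{\Gr(k,\infty)}$; and if no two are equidimensional, all three pairwise distances equal $c_*$ and the inequality $c_* \le c_* + c_*$ is trivial. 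The one point requiring a line of justification is that each $c_*$ in the stated list genuinely bounds the corresponding equidimensional distance — e.g.\ $d^\pi = \sin\theta_k \le 1 = c_\pi$ and $d^\alpha = \theta_k \le \pi/2 = c_\alpha$ — which is read off directly from the formulas in Table~\ref{tab:distances}, with $c_\mu = \infty$ handled by the convention that any finite distance is below it.
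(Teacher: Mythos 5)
Your overall strategy for the root-mean-square metrics --- embed $\mathbf{A},\mathbf{B},\mathbf{C}$ into a single Grassmannian via \eqref{eq:epsilon} and inherit its triangle inequality --- is exactly the paper's proof, so the approach is right; but the step you defer (``the only thing to check is that the completion realizes the $\pi/2$-padding consistently for all three pairs simultaneously'') is not a loose end, it is the entire content of the proof, and it is not automatic. The paper carries it out with explicit matrices: with $k\le l\le m=\dim\mathbf{C}$, it maps the three subspaces into $\Gr(m,n+m-k)$ by padding $A$ with $I_{m-k}$, padding $B$ with $I_{m-l}$, and padding $C$ with nothing, all identity blocks aligned to the bottom-right corner as in \eqref{eq:epsilon}. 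This alignment makes the paddings \emph{nested}: the extra dimensions appended to $\mathbf{B}$ lie inside those appended to $\mathbf{A}$, so the pair $(\mathbf{A}',\mathbf{B}')$ acquires exactly $l-k$ right angles and $m-l$ zero angles beyond the original $\theta_i(\mathbf{A},\mathbf{B})$, which is precisely what Table~\ref{tab:metrics} demands. If instead one pads $\mathbf{A}$ and $\mathbf{B}$ with mutually orthogonal blocks --- a natural reading of ``appending vectors orthogonal to the other subspace'' done pair by pair --- then $(\mathbf{A}',\mathbf{B}')$ picks up $m-k$ right angles and the embedded distance strictly overshoots $d_{\Gr(\infty,\infty)}(\mathbf{A},\mathbf{B})$ whenever $l<m$; so a specific simultaneous padding must be exhibited and its principal angles computed, and this computation is missing from your proposal. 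Relatedly, the Minkowski-decomposition idea in your second paragraph cannot be repaired: the angle part $\delta^*$ provably violates the triangle inequality (Section~\ref{sec:dinfty}: for $\mathbf{L}\not\subset\mathbf{A}$ and $\mathbf{B}=\mathbf{A}\oplus\mathbf{L}$ one has $\delta(\mathbf{L},\mathbf{A})>0=\delta(\mathbf{L},\mathbf{B})+\delta(\mathbf{B},\mathbf{A})$), so no argument treating the angle term and the dimension term as two separate metrics can succeed --- the two terms must compensate for each other, which is exactly what the holistic embedding achieves.

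For the indicator metrics \eqref{eq:indicator} your case analysis has the right shape (the paper dismisses this part as trivial), but your justification of the mixed-dimension case contains an error: $c_*$ is \emph{not} always an upper bound of $d^*_{\Gr(k,\infty)}$. For Fubini--Study, $d^\phi_{\Gr(k,\infty)}$ can be as large as $\pi/2$, while $c_\phi=1$. What the case $\dim\mathbf{A}=\dim\mathbf{B}\ne\dim\mathbf{C}$ actually requires is
\[
d^*_{\Gr(k,\infty)}(\mathbf{A},\mathbf{B})\;\le\; d^*_{\Gr(\infty,\infty)}(\mathbf{A},\mathbf{C})+d^*_{\Gr(\infty,\infty)}(\mathbf{C},\mathbf{B})\;=\;2c_*,
\]
i.e.\ only $\sup d^*_{\Gr(k,\infty)}\le 2c_*$, which does hold for every listed constant (for $\phi$: $\pi/2\le 2$). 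With that correction, and with the simultaneous-padding computation supplied, your argument closes and coincides with the paper's.
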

\begin{proof}
It is trivial to see that the expression defined in \eqref{eq:indicator} yields a metric on $\Gr(\infty,\infty)$ for $ * = \alpha, \beta, \mu, \pi, \sigma, \phi$, and so we just need to check the remaining three cases that take the form in \eqref{eq:rms}. Of the four defining properties of a metric, only the triangle inequality is not immediately clear from \eqref{eq:rms}.

Let $k = \dim \mathbf{A}$, $l = \dim \mathbf{B}$, and $m = \dim \mathbf{C}$. We may assume \textsc{wlog} that $k\le l \le m \le n$ where $n$ is chosen sufficiently large so that $\mathbf{A}, \mathbf{B}, \mathbf{C}$ are subspaces in $\mathbb{R}^n$.  Let $A \in \mathbb{R}^{n\times k}$, $B \in \mathbb{R}^{n \times l}$, $C \in \mathbb{R}^{n \times m}$ be matrices whose columns are orthonormal bases of $\mathbf{A}$, $\mathbf{B}$, $\mathbf{C}$ respectively. Consider the following $(n+m-k)\times m$ matrices:
\[
A'=\begin{bmatrix}
A & 0\\
0 & I_{m-k}
\end{bmatrix},\quad
B'=\begin{bmatrix}
B & 0\\
0 & 0\\
0 & I_{m-l}
\end{bmatrix},\quad
C'=\begin{bmatrix}
C\\
0
\end{bmatrix}.
\]
and set $\mathbf{A}' = \operatorname{span}(A')$, $\mathbf{B}' = \operatorname{span}(B')$, $\mathbf{C}' = \operatorname{span}(C')$; note that these are just $\mathbf{A}$, $\mathbf{B}$, $\mathbf{C}$ embedded in $\Graff(m,n+m-k)$ via \eqref{eq:epsilon}. The expressions in Table~\ref{tab:metrics} satisfy
\begin{align*}
d^*_{\Gr(\infty,\infty)} (\mathbf{A},\mathbf{B}) = d^*_{\Gr(m,n+m-k)}(\mathbf{A}',\mathbf{B}'),\\
d^*_{\Gr(\infty,\infty)} (\mathbf{B},\mathbf{C}) = d^*_{\Gr(m,n+m-k)}(\mathbf{B}',\mathbf{C}'),\\
d^*_{\Gr(\infty,\infty)} (\mathbf{A},\mathbf{C}) = d^*_{\Gr(m,n+m-k)}(\mathbf{A}',\mathbf{C}').
\end{align*}
Since $\mathbf{A}',\mathbf{B}',\mathbf{C}' \in \Gr(m,n+m-k)$, the triangle inequality for $d^*_{\Gr(m,n+m-k)}$ immediately
%\begin{multline*}
%d^*_{\Gr(m,n+m-k)}(\mathbf{A}',\mathbf{C}') \le d^*_{\Gr(m,n+m-k)}(\mathbf{A}',\mathbf{B}')\\
%+d^*_{\Gr(m,n+m-k)}(\mathbf{B}',\mathbf{C}'),
%\end{multline*}
yields the triangle inequality for $d^*_{\Gr(\infty,\infty)}$.
\end{proof}
The proof shows that for any $\mathbf{A} \in \Gr(k,n)$ and $\mathbf{B} \in \Gr(l,n)$ where $k \le l \le n$,
\[
d^*_{\Gr(\infty,\infty)}(\mathbf{A},\mathbf{B})
=d^*_{\Gr(l,n+l-k)}\bigl(\iota^{k,l}_{n,n+l-k}(\mathbf{A}),\iota^{l,l}_{n,n+l-k}(\mathbf{B})\bigr).
\]
The embeddings $\iota^{k,l}_{n,n+l-k}:\Gr(k,n)\to \Gr(l,n+l-k)$ and $\iota^{l,l}_{n,n+l-k}:\Gr(l,n)\to \Gr(l,n+l-k)$ are as defined in \eqref{eq:epsilon} and are isometric for all $ k\le l \le n$.

\begin{proposition}\label{prop:prob4}
Let  $k\le l \le n/2$ and $\mathbf{A} \in \Gr(k,n)$, $\mathbf{B}\in \Gr(l,n)$. Then
\begin{equation}\label{eq:equal2}
\max_{\mathbf{X}\in \Omega_{+}(\mathbf{A})}d_{\Gr(l,n)}(\mathbf{X},\mathbf{B})  = 
\max_{\mathbf{Y} \in \Omega_{-}(\mathbf{B})} d_{\Gr(k,n)}(\mathbf{Y},\mathbf{A})
= d_{\Gr(\infty,\infty)} (\mathbf{A},\mathbf{B}),
\end{equation}
i.e., $d_{\Gr(\infty,\infty)}$ is the distance between \emph{furthest} subspaces. 
\end{proposition}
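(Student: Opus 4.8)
The plan is to show that each of the two maxima equals the quantity $\bigl(\sum_{i=1}^{k}\theta_i^2 + (l-k)\pi^2/4\bigr)^{1/2}$, which by the recipe of Section~\ref{sec:metric} is exactly $d_{\Gr(\infty,\infty)}(\mathbf{A},\mathbf{B})$. I would treat the two maxima separately, handling $\Omega_{+}(\mathbf{A})$ by a direct construction and then deducing the statement for $\Omega_{-}(\mathbf{B})$ by the same orthocomplement duality used in the proof of Theorem~\ref{thm1}. In both cases the extremal estimates reduce to Corollary~\ref{cor:compare}, so the real content is exhibiting the maximizers and checking that the dimension hypothesis $l\le n/2$ leaves enough room to build them.

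For the maximum over $\Omega_{+}(\mathbf{A})$, the first step is to exhibit a maximizer. Here the hypothesis $l \le n/2$ enters: since $\mathbf{A}^{\perp}\cap\mathbf{B}^{\perp}=(\mathbf{A}+\mathbf{B})^{\perp}$ has dimension $n - \dim(\mathbf{A}+\mathbf{B}) \ge n - (k+l) \ge l-k$, there is room to choose a subspace $\mathbf{W}\subseteq\mathbf{A}^{\perp}\cap\mathbf{B}^{\perp}$ of dimension $l-k$. Setting $\mathbf{X} = \mathbf{A}\oplus\mathbf{W}$ gives a point of $\Omega_{+}(\mathbf{A})$; taking an orthonormal basis $X=[\,A\ \ W\,]$ of $\mathbf{X}$ adapted to this direct sum shows $X^{\mathsf{T}}B = \bigl[\begin{smallmatrix} A^{\mathsf{T}}B \\ 0\end{smallmatrix}\bigr]$ because $\mathbf{W}\perp\mathbf{B}$, so the singular values of $X^{\mathsf{T}}B$ are those of $A^{\mathsf{T}}B$ padded with $l-k$ zeros and the principal angles between $\mathbf{X}$ and $\mathbf{B}$ are $\theta_1,\dots,\theta_k,\pi/2,\dots,\pi/2$, realizing the target value. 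The second step is the matching upper bound: for any $\mathbf{X}\in\Omega_{+}(\mathbf{A})$, Corollary~\ref{cor:compare} applied to $\mathbf{A}\subseteq\mathbf{X}$ gives $\theta_i(\mathbf{X},\mathbf{B})\le\theta_i(\mathbf{A},\mathbf{B})$ for $i=1,\dots,k$, while the remaining $l-k$ principal angles are trivially at most $\pi/2$; summing squares yields $d_{\Gr(l,n)}(\mathbf{X},\mathbf{B})^2 \le \sum_{i=1}^{k}\theta_i^2 + (l-k)\pi^2/4$. Combining the two steps gives the first equality.

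For the maximum over $\Omega_{-}(\mathbf{B})$, I would pass to orthocomplements exactly as in the proof of Theorem~\ref{thm1}. The map $\mathbf{Y}\mapsto\mathbf{Y}^{\perp}$ is an isometry $\Gr(k,n)\cong\Gr(n-k,n)$ under which $\mathbf{Y}\subseteq\mathbf{B}$ becomes $\mathbf{B}^{\perp}\subseteq\mathbf{Y}^{\perp}$, so $\Omega_{-}(\mathbf{B})$ is carried bijectively onto $\Omega_{+}(\mathbf{B}^{\perp})$ inside $\Gr(n-k,n)$ and $d_{\Gr(k,n)}(\mathbf{Y},\mathbf{A}) = d_{\Gr(n-k,n)}(\mathbf{Y}^{\perp},\mathbf{A}^{\perp})$. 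The maximum over $\Omega_{-}(\mathbf{B})$ thus becomes a maximum of the first type, and I would finish by invoking the principal-angle relations between a pair of subspaces and their complements from \cite{Knyazev}, which identify the nonzero principal angles of $(\mathbf{A}^{\perp},\mathbf{B}^{\perp})$ with those of $(\mathbf{A},\mathbf{B})$, so that the $\Omega_{+}$ computation in the dual picture again returns $\bigl(\sum_{i=1}^{k}\theta_i^2 + (l-k)\pi^2/4\bigr)^{1/2}$.

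The hard part will be this second equality, and specifically the interaction between the orthocomplement reduction and the dimension hypothesis. In the dual picture the ambient dimension is unchanged but the roles of the two dimensions are reversed, so one must verify that the `room' inequality needed to realize the maximizer still holds after complementation and that the bookkeeping of principal angles under $\perp$, with its attendant extra right angles and zeros, is carried out correctly; this is precisely the delicate point that \cite{Knyazev} is meant to control. I would therefore concentrate the effort on checking that the complement construction produces a genuine point of the relevant Schubert variety of the correct dimension whose principal angles to $\mathbf{A}^{\perp}$ match the claimed list, since the extremal inequalities themselves follow routinely from Corollary~\ref{cor:compare} in both pictures.
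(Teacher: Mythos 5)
Your treatment of the first equality is correct and is essentially the paper's own argument: the paper likewise obtains the upper bound from Corollary~\ref{cor:compare} together with the trivial bound $\pi/2$ on the remaining $l-k$ principal angles, and attains it with a subspace spanned by $\mathbf{A}$ and $l-k$ vectors orthogonal to both $\mathbf{A}$ and $\mathbf{B}$; your $\mathbf{X}=\mathbf{A}\oplus\mathbf{W}$ with $\mathbf{W}\subseteq\mathbf{A}^{\perp}\cap\mathbf{B}^{\perp}$ is the same maximizer, your count $n-(k+l)\ge l-k$ is exactly where $l\le n/2$ enters in the paper's proof too, and your version even avoids the paper's preliminary reduction to $\mathbf{A}\cap\mathbf{B}=\{0\}$.

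The genuine gap is the second equality, and the difficulty you flagged is fatal rather than delicate: orthocomplementation inverts the dimension hypothesis. Under $\mathbf{Y}\mapsto\mathbf{Y}^{\perp}$ the maximization over $\Omega_{-}(\mathbf{B})$ becomes a maximization over $\Omega_{+}(\mathbf{B}^{\perp})\subseteq\Gr(n-k,n)$ with target $\mathbf{A}^{\perp}$, i.e., an instance of the first equality with $(k,l)$ replaced by $(n-l,n-k)$; the hypothesis that instance requires is $n-k\le n/2$, i.e., $k\ge n/2$, which together with $k\le l\le n/2$ forces $k=l=n/2$. Equivalently, the ``room'' needed in the dual picture is a subspace of $(\mathbf{B}^{\perp})^{\perp}\cap(\mathbf{A}^{\perp})^{\perp}=\mathbf{A}\cap\mathbf{B}$ of dimension $l-k$, which generically does not exist. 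No bookkeeping with \cite{Knyazev} can repair this, because the second equality is false as stated: take $n=4$, $k=1$, $l=2$, $\mathbf{A}=\operatorname{span}\{e_1\}$, $\mathbf{B}=\operatorname{span}\{e_2,e_3\}$, where $e_1,\dots,e_4$ is the standard basis of $\mathbb{R}^4$. Every line in $\mathbf{B}$ makes angle $\pi/2$ with $\mathbf{A}$, so $\max_{\mathbf{Y}\in\Omega_{-}(\mathbf{B})}d_{\Gr(1,4)}(\mathbf{Y},\mathbf{A})=\pi/2$, whereas $\max_{\mathbf{X}\in\Omega_{+}(\mathbf{A})}d_{\Gr(2,4)}(\mathbf{X},\mathbf{B})=d_{\Gr(\infty,\infty)}(\mathbf{A},\mathbf{B})=\pi/\sqrt{2}$, attained at $\mathbf{X}=\operatorname{span}\{e_1,e_4\}$; in general the $\Omega_{-}$ maximum is at most $\sqrt{k}\,\pi/2$, which can fall strictly below $d_{\Gr(\infty,\infty)}(\mathbf{A},\mathbf{B})$ whenever $k<l$. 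You should also know that the paper's own handling of this part is no better: it dismisses the second equality as following from the symmetry $d_{\Gr(\infty,\infty)}(\mathbf{A},\mathbf{B})=d_{\Gr(\infty,\infty)}(\mathbf{B},\mathbf{A})$, which is a non sequitur --- symmetry of the metric does not convert a maximum over $\Omega_{-}(\mathbf{B})\subseteq\Gr(k,n)$ into a maximum over $\Omega_{+}(\mathbf{A})\subseteq\Gr(l,n)$, since ``containing'' and ``contained in'' cannot be exchanged when $k<l$. So your argument establishes exactly as much as can be established: the first and third members of \eqref{eq:equal2} are equal, while the middle member genuinely differs.
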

\begin{proof}
We assume \textsc{wlog} that $\mathbf{A}\cap \mathbf{B}=\{0 \}$ by Proposition~\ref{prop:angles}.  Since
\[
d_{\Gr(l,n)}(\mathbf{X},\mathbf{B})=\delta(\mathbf{X},\mathbf{B})=\Bigl(\sum\nolimits_{i=1}^l \theta_i(\mathbf{X},\mathbf{B})^2\Bigr)^{1/2},
\]
and by Corollary~\ref{cor:compare},
$\theta_i(\mathbf{X},\mathbf{B})\le \theta_i(\mathbf{A},\mathbf{B})$, $i =1,\dots,k$,
we obtain
\[%\label{eq3}
d_{\Gr(l,n)}(\mathbf{X},\mathbf{B})\le \Bigl(\delta(\mathbf{A},\mathbf{B})^2+\sum\nolimits_{i=k+1}^l\theta_i(\mathbf{X},\mathbf{B})^2\Bigr)^{1/2}.
\]
Let $(a_1,b_1),\dots, (a_k, b_k)$ be the principal vectors between $\mathbf{A}$ and $\mathbf{B}$. We extend $b_1,\dots, b_k$ to obtain an orthonormal basis $b_1,\dots, b_k,b_{k+1},\dots, b_l$ of $\mathbf{B}$. Let $\mathbf{X}\cap\mathbf{A}^\perp$ be the orthogonal complement of $\mathbf{A}$ in $\mathbf{X}$ and let $\mathbf{B}_0 \coloneqq\operatorname{span}\{b_{k+1},\dots, b_l\}$. Then
\[
\Bigl(\sum\nolimits_{i=k+1}^l\theta_i(\mathbf{X},\mathbf{B})^2 \Bigr)^{1/2} = \delta(\mathbf{X}\cap\mathbf{A}^\perp,\mathbf{B}_0),
\]
and the last inequality becomes
\[
d_{\Gr(l,n)}(\mathbf{X},\mathbf{B}) \le  \sqrt{\delta(\mathbf{A},\mathbf{B})^2+\delta(\mathbf{X}\cap\mathbf{A}^\perp,\mathbf{B}_0)^2}.
\]
If $n\ge 2l$, then there exist $l-k$ vectors $c_1,\dots,c_{l-k}$ orthogonal to $\mathbf{A}$ and $\mathbf{B}$ simultaneously. Choosing $\mathbf{X} = \operatorname{span}\{a_1,\dots,a_k,c_1,\dots,c_{l-k}\}$, we attain the required maximum:
\[
d_{\Gr(l,n)}(\mathbf{X},\mathbf{B})=\sqrt{\delta(\mathbf{A},\mathbf{B})^2+(l-k)\pi^2/4} = d_{\Gr(\infty,\infty)} (\mathbf{A},\mathbf{B}).
\]
The second equality in \eqref{eq:equal2} follows from $d_{\Gr(\infty,\infty)} (\mathbf{A},\mathbf{B}) = d_{\Gr(\infty,\infty)} (\mathbf{B},\mathbf{A})$, given that $d_{\Gr(\infty,\infty)}$ is a metric by Proposition~\ref{prop:metric}.
\end{proof}

The existence of the metrics $d^*_{\Gr(\infty,\infty)}$ as defined in \eqref{eq:rms} and \eqref{eq:indicator}  does not contradict our earlier discussion about the general nonexistence of coproduct in $\boldsymbol{\mathsf{Met}}$ as these metrics do not respect continuous \emph{contractions}.
Take the Grassmann metric on $\Gr(\infty,\infty)$ for instance. $(\Gr(\infty,\infty),d_{\Gr(\infty,\infty)})$ is an object of the category $\boldsymbol{\mathsf{Met}}$  but it is \emph{not} the coproduct of  $\{(\Gr(k,\infty), d_{\Gr(k,\infty)} ) : k \in \mathbb{N}\}$. Indeed, let $Y = \{y_1, y_2\}$ with metric defined by $d_Y(y_1, y_2) = 1$. Consider a family of maps $f_k:\Gr(k,\infty)\to Y$,
\[ 
f_k(\mathbf{A})=
\begin{cases}
y_1&\text{if }k=2,\\
y_2&\text{otherwise}.
\end{cases}
\]
Then $f_k$ is a continuous contraction between $\Gr(k,\infty)$ and $Y$. So $\{f_k :k \in \mathbb{N}\}$ is a family of morphisms in $\boldsymbol{\mathsf{Met}}$ compatible with $\{(\Gr(k,\infty), d_{\Gr(k,\infty)} ) : k \in \mathbb{N}\}$. If $(\Gr(\infty,\infty),d_{\Gr(\infty,\infty)})$ is the coproduct of this family, then there must be a continuous contraction $f:\Gr(\infty,\infty)\to Y$ such that $f\circ \iota_k=f_k$ with $\iota_k$ being the natural inclusion of $\Gr(k,\infty)$ into $\Gr(\infty,\infty)$. But taking $\mathbf{A}\in \Gr(2,\infty)$ and $\mathbf{B}\in \Gr(3,\infty)$, we see that
\[
d_{\Gr(\infty,\infty)}(\mathbf{A},\mathbf{B})\ge \frac{\pi}{2} > 1=d_Y\bigl(f(\mathbf{A}),f(\mathbf{B})\bigr),
\]
contradicting the surmise that $f$ is a contraction. Similarly, one may show that $(\Gr(\infty,\infty),d_{\Gr(\infty,\infty)}^*)$ is not a coproduct in $\boldsymbol{\mathsf{Met}}$ for any $* = \alpha, \beta, \kappa, \mu, \pi, \rho, \sigma, \phi$.

$(\Gr(\infty,\infty),d_{\Gr(\infty,\infty)})$ is also not the coproduct of $\{(\Gr(k,\infty), d_{\Gr(k,\infty)} ) : k \in \mathbb{N}\}$ in the category of metric spaces with continuous (or uniformly continuous) maps as morphisms. The coproduct in this category is simply $\Gr(\infty,\infty)$ with the metric induced by the disjoint union topology, which is too fine (in the sense of topology) to be interesting. In particular, such a metric is unrelated to the distance $\delta$.

%%%%
\section{Comparison with existing works}\label{sec:existing}

There are two existing proposals for a distance between subspaces of different dimensions --- the containment gap and the symmetric directional distance. These turn out to be special cases of our distance in Section~\ref{sec:main} and our metric in Section~\ref{sec:metric}. 

Let $\mathbf{A} \in \Gr(k,n)$ and $\mathbf{B} \in \Gr(l,n)$.  The \emph{containment gap} is defined as 
\[
\gamma(\mathbf{A}, \mathbf{B}) \coloneqq \max_{a\in \mathbf{A}} \; \min_{b\in \mathbf{B}} \frac{\lVert a - b \rVert}{\lVert a \rVert}.
\]
This was proposed in \cite[pp.~197--199]{Kato} and used in numerical linear algebra \cite{SS} for measuring separation between Krylov subspaces \cite{BES}. It is equivalent to our \emph{projection distance} $\delta^\pi$ in  Theorem~\ref{thm:othermetrics}. It was observed in \cite[p.~495]{BES} that
\[
\gamma(\mathbf{A}, \mathbf{B}) = \sin\bigl( \theta_k(\mathbf{A},\mathbf{Y}) \bigr)
\]
where $\mathbf{Y} \in \Omega_{-}(\mathbf{B})$ is nearest to $\mathbf{A}$ in the projection distance $d^\pi_{\Gr(k,n)}$. By Theorem~\ref{thm:othermetrics}, we deduce that it can also be realized as
\[
\gamma(\mathbf{A}, \mathbf{B}) = \sin\bigl( \theta_l(\mathbf{B},\mathbf{X}) \bigr)
\]
where $\mathbf{X} \in \Omega_{+}(\mathbf{A})$ is nearest to $\mathbf{B}$ in the projection distance $d^\pi_{\Gr(l,n)}$, a fact about the containment gap that had not been observed before. Indeed, by Theorem~\ref{thm:othermetrics}, we get
\[
\gamma(\mathbf{A}, \mathbf{B}) = \delta^\pi (\mathbf{A}, \mathbf{B})
\]
for all $\mathbf{A} \in \Gr(k,n)$ and $\mathbf{B} \in \Gr(l,n)$.

The \emph{symmetric directional distance} is defined as
\begin{equation}\label{eq:sdd}
d_{\Delta}(\mathbf{A},\mathbf{B})  \coloneqq \Bigl( \max(k,l) - \sum\nolimits_{i,j=1}^{k,l} (a_i^\mathsf{T} b_j)^2 \Bigr)^{1/2}
\end{equation}
where $A = [a_1,\dots,a_k]$ and $B = [b_1,\dots,b_l]$ are the respective orthonormal bases. This was proposed in \cite{SWF, WWF}, and is widely used  \cite{bar, ref2a, motion, ref2b, eeg, mech, blog, ref2c, econ, network, face, ref2d, text}. The definition \eqref{eq:sdd} is equivalent to our \emph{chordal metric} $d^\kappa_{\Gr(\infty,\infty)}$ in Table~\ref{tab:metrics},
\[
d_{\Gr(\infty,\infty)}^\kappa (\mathbf{A},\mathbf{B})^2 = \lvert k - l \rvert + \sum_{i=1}^{\min(k,l)} \sin^2 \theta_i = \max(k,l) - \sum_{i,j=1}^{k,l} (a_i^\mathsf{T} b_j)^{2} =d_\Delta (\mathbf{A}, \mathbf{B})^2,
\]
since $ \lvert k - l \rvert  = \max(k,l) - \min(k,l)$, and
\[
 \sum\nolimits_{i,j=1}^{k,l} (a_i^\mathsf{T} b_j)^{2} = \lVert A^\mathsf{T} B \rVert_F^2 
= \sum\nolimits_{i=1}^{\min(k,l)}  \cos^2 \theta_i  = \min(k,l) - \sum\nolimits_{i=1}^{\min(k,l)}  \sin^2 \theta_i .
\]

%%%%
\section{Geometry of $\Omega_{+}(\mathbf{A})$ and $\Omega_{-}(\mathbf{B})$}\label{sec:schubert}

Up to this point, $\Omega_{+}(\mathbf{A})$ and $\Omega_{-}(\mathbf{B})$, as defined in Definition~\ref{def:Omega}, are treated as mere subsets of $\Gr(l,n)$ and $\Gr(k,n)$ respectively. We will see that $\Omega_{+}(\mathbf{A})$ and $\Omega_{-}(\mathbf{B})$ have rich geometric properties. Firstly, we will show that they are Schubert varieties, justifying their names.
\begin{definition}\label{def:Schubert}
Let $\mathbf{X}_1\subset \mathbf{X}_2 \subset \dots \subset \mathbf{X}_k$ be a fixed flag in $\mathbb{R}^n$. The \emph{Schubert variety} $\Omega(\mathbf{X}_1, \dots, \mathbf{X}_k , n)$ is the set of $k$-planes $\mathbf{Y}$ satisfying the Schubert conditions
$\dim (\mathbf{Y}\cap \mathbf{X}_i)\ge i$, $i=1,\dots, k$, i.e.,
\[
\Omega(\mathbf{X}_1, \dots, \mathbf{X}_k , n) = \{\mathbf{Y} \in \Gr(k,n):  \dim (\mathbf{Y}\cap \mathbf{X}_i)\ge i, \; i=1,\dots, k \}.
\]
\end{definition}
\begin{definition}
Let $0 \eqqcolon k_0 < k_1<\cdots < k_{m+1} \coloneqq n$ be a sequence of increasing nonnegative integers. The associated \emph{flag variety}  is the set of flags satisfying the condition
$\dim \mathbf{X}_i = k_i$, $i=0, 1,\dots, m+1$.
We denote it by $\Flag(k_1,\dots, k_m, n)$, i.e.,
\[
\{(\mathbf{X}_1,\dots,\mathbf{X}_{m}) \in \Gr(k_1,n) \times \dots \times \Gr(k_m,n) : 
 \mathbf{X}_i\subset \mathbf{X}_{i+1}, \; i =1,\dots,m \}.
\]
\end{definition}
Observe that a Schubert variety depends on a specific increasing sequence of subspaces whereas a flag variety depends only on an increasing sequence of dimensions (of subspaces). Flag varieties may be viewed as a generalization of Grassmannians since if $m=1$, then $\Flag(k,n) = \Gr(k,n)$. Like Grassmannians, $\Flag(k_1,\dots, k_m, n)$ is a smooth manifold and sometimes called a \emph{flag manifold}. The parallel goes further, $\Flag(k_1,\dots, k_m, n) $ is a homogeneous space,
\begin{equation}\label{eq:flag}
\Flag(k_1,\dots, k_m, n) \cong \O(n)/\bigl(\O(d_1) \times \dots \times \O(d_{m+1})\bigr)
\end{equation}
where $d_i = k_{i} - k_{i-1}$ for $i =1,\dots, m+1$, generalizing \eqref{eq:homo}.

Let $\mathbf{A}\in \Gr(k,n)$ and $\mathbf{B} \in \Gr(l,n)$ with $k\le l$. Then
\[
\Omega_{+}(\mathbf{A}) = \Omega(\mathbf{A}_1, \dots, \mathbf{A}_l , n),\qquad
\Omega_{-}(\mathbf{B}) = \Omega(\mathbf{B}_1, \dots, \mathbf{B}_{k} , n),
\]
are Schubert varieties in $\Gr(l,n)$ and  $\Gr(k,n)$ respectively with the flags
\begin{gather*}
\{0\} \eqqcolon \mathbf{A}_0 \subset \mathbf{A}_1\subset \dots \subset \mathbf{A}_k\coloneqq\mathbf{A}\subset \mathbf{A}_{k+1}\dots \subset \mathbf{A}_{l}, \\
\{0\} \eqqcolon \mathbf{B}_0 \subset \mathbf{B}_1 \subset  \dots \subset \mathbf{B}_{k} \coloneqq\mathbf{B}.
\end{gather*}
where $\mathbf{A}_{k+i}$ is a subspace of $\mathbb{R}^n$ containing $\mathbf{A}$ of dimension $n-l+(k+i)$ for $1\le i\le l-k$.

The isomorphism $\Gr(l,n) \cong \Gr(n-l,n)$ (resp.\ $\Gr(k,n) \cong \Gr(n-k,n)$) that sends $\mathbf{X}$ to $\mathbf{X}^{\perp}$ takes $\Omega_{+}(\mathbf{A})$ to $\Omega_{-}(\mathbf{A}^{\perp})$ (resp.\ $\Omega_{-}(\mathbf{B})$ to $\Omega_{+}(\mathbf{B}^{\perp})$). Thus $\Omega_{+}(\mathbf{A})$ (resp.\ $\Omega_{-}(\mathbf{B})$) may also be viewed as Schubert varieties in $\Gr(n-l,n)$ (resp.\ $\Gr(n-k,n)$).  More importantly, this observation implies that $\Omega_{+}(\mathbf{A})$ and $\Omega_{-}(\mathbf{B})$, despite superficial difference in their definitions, are essentially the same type of objects.

\begin{proposition}\label{prop:Omega}
For any $\mathbf{A} \in \Gr(k,n)$  and $\mathbf{B} \in \Gr(l,n)$, we have
\[
\Omega_{+}(\mathbf{A}) \cong \Omega_{-}(\mathbf{A}^{\perp}) \quad\text{and}\quad \Omega_{-}(\mathbf{B}) \cong \Omega_{+}(\mathbf{B}^{\perp}).
\]
\end{proposition}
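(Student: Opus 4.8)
The natural candidate for the isomorphism realizing each $\cong$ is the orthogonal complement map, which I would write as $c_d \colon \Gr(d,n) \to \Gr(n-d,n)$, $\mathbf{V} \mapsto \mathbf{V}^{\perp}$. This map is already invoked in the proof of Theorem~\ref{thm1}, where it is used as an isometric isomorphism of Grassmannians (and is in fact an isomorphism of the underlying varieties as well). The whole proof rests on a single elementary property of $c_d$: it reverses inclusions, i.e., for subspaces $\mathbf{U},\mathbf{V} \subseteq \mathbb{R}^n$ one has $\mathbf{U}\subseteq \mathbf{V}$ if and only if $\mathbf{V}^{\perp}\subseteq \mathbf{U}^{\perp}$. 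This is immediate from the definition of the orthogonal complement, together with the fact that $(\cdot)^{\perp}$ is an involution.

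The plan for the first isomorphism is to fix $\mathbf{A}\in\Gr(k,n)$ and track dimensions carefully. By Definition~\ref{def:Omega}, $\Omega_{+}(\mathbf{A})$ consists of the $l$-planes $\mathbf{X}$ with $\mathbf{A}\subseteq\mathbf{X}$, hence sits inside $\Gr(l,n)$, and I would apply $c_l$ to it. For such an $\mathbf{X}$, the inclusion-reversing property gives $\mathbf{X}^{\perp}\subseteq\mathbf{A}^{\perp}$, where $\mathbf{X}^{\perp}\in\Gr(n-l,n)$ while $\mathbf{A}^{\perp}\in\Gr(n-k,n)$; thus $\mathbf{X}^{\perp}$ is an $(n-l)$-plane contained in $\mathbf{A}^{\perp}$, which is exactly the condition defining $\Omega_{-}(\mathbf{A}^{\perp})$. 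Conversely, any $(n-l)$-plane $\mathbf{Z}\subseteq\mathbf{A}^{\perp}$ satisfies $\mathbf{A}=(\mathbf{A}^{\perp})^{\perp}\subseteq\mathbf{Z}^{\perp}\in\Gr(l,n)$, so $\mathbf{Z}=c_l(\mathbf{Z}^{\perp})$ lies in the image; hence $c_l$ restricts to a bijection $\Omega_{+}(\mathbf{A})\to\Omega_{-}(\mathbf{A}^{\perp})$. Since $c_l$ is an isomorphism of the ambient Grassmannians carrying the Schubert variety $\Omega_{+}(\mathbf{A})$ bijectively onto the Schubert variety $\Omega_{-}(\mathbf{A}^{\perp})$, it restricts to an isomorphism of these subvarieties, which establishes the first claim.

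The second isomorphism $\Omega_{-}(\mathbf{B})\cong\Omega_{+}(\mathbf{B}^{\perp})$ I would treat as entirely parallel: the same computation, with the roles of $\subseteq$ and $\supseteq$ interchanged, shows that $c_k$ carries $\Omega_{-}(\mathbf{B})\subseteq\Gr(k,n)$ bijectively onto $\Omega_{+}(\mathbf{B}^{\perp})\subseteq\Gr(n-k,n)$. Alternatively, it follows from the first claim by the involutivity $(\cdot)^{\perp\perp}=\mathrm{id}$: applying the first claim to the plane $\mathbf{B}^{\perp}$ (with target dimension $n-k$) gives $\Omega_{+}(\mathbf{B}^{\perp})\cong\Omega_{-}\bigl((\mathbf{B}^{\perp})^{\perp}\bigr)=\Omega_{-}(\mathbf{B})$.

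I do not expect any genuine obstacle here; the content is a short formal consequence of the fact that $(\cdot)^{\perp}$ is an inclusion-reversing isomorphism of Grassmannians. The only points requiring care are the dimension bookkeeping — verifying that the images land in Grassmannians of the correct dimension so that the defining conditions of $\Omega_{+}$ and $\Omega_{-}$ are genuinely exchanged — and stating explicitly in which category the symbol $\cong$ is to be read (isometric, smooth, or algebraic), so that the phrase ``restricts to an isomorphism of subvarieties'' is justified in that category.
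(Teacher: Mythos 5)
Your proposal is correct and is essentially the paper's own argument: the paper justifies Proposition~\ref{prop:Omega} by exactly the observation that the inclusion-reversing duality $\mathbf{X} \mapsto \mathbf{X}^{\perp}$, already used as an isometric isomorphism $\Gr(l,n) \cong \Gr(n-l,n)$ in the proof of Theorem~\ref{thm1}, carries $\Omega_{+}(\mathbf{A})$ onto $\Omega_{-}(\mathbf{A}^{\perp})$ and $\Omega_{-}(\mathbf{B})$ onto $\Omega_{+}(\mathbf{B}^{\perp})$. Your write-up merely spells out the dimension bookkeeping and the two-sided containment argument that the paper leaves implicit, so there is nothing to correct.
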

Also, $\Omega_{+}(\mathbf{A})$ and  $\Omega_{-}(\mathbf{B})$ are uniquely determined by $\mathbf{A}$ and $\mathbf{B}$ respectively.
\begin{proposition}\label{prop:Omega1}
Let $\mathbf{A}, \mathbf{A}' \in \Gr(k,n)$ and  $\mathbf{B},\mathbf{B}' \in \Gr(l,n)$. Then
\begin{gather*}
\Omega_{+}(\mathbf{A}) =\Omega_{+}(\mathbf{A'})\quad \text{if and only if} \quad \mathbf{A} = \mathbf{A}',\\
\Omega_{-}(\mathbf{B}) =\Omega_{-}(\mathbf{B}')\quad \text{if and only if} \quad \mathbf{B} = \mathbf{B}'.
\end{gather*}
\end{proposition}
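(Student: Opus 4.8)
The plan is to reduce both biconditionals to a pair of \emph{intrinsic recovery} formulas. The two ``if'' directions are immediate, since identical subspaces obviously yield identical defining conditions and hence identical Schubert varieties; so all the content is in the two ``only if'' implications. For these I would show that each subspace is recoverable from its associated Schubert variety via
\[
\mathbf{A} = \bigcap\nolimits_{\mathbf{X} \in \Omega_{+}(\mathbf{A})} \mathbf{X}, \qquad \mathbf{B} = \sum\nolimits_{\mathbf{Y} \in \Omega_{-}(\mathbf{B})} \mathbf{Y}.
\]
Granting these, the proposition follows in one line: if $\Omega_{+}(\mathbf{A}) = \Omega_{+}(\mathbf{A}')$, intersecting the members of this common set gives $\mathbf{A} = \mathbf{A}'$, and if $\Omega_{-}(\mathbf{B}) = \Omega_{-}(\mathbf{B}')$, summing the members gives $\mathbf{B} = \mathbf{B}'$.

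To establish the first identity, the inclusion $\mathbf{A} \subseteq \bigcap_{\mathbf{X}} \mathbf{X}$ is trivial because every $\mathbf{X} \in \Omega_{+}(\mathbf{A})$ contains $\mathbf{A}$ by definition. The reverse inclusion is the crux: given $v \notin \mathbf{A}$, I must produce a single $l$-plane $\mathbf{X}$ with $\mathbf{A} \subseteq \mathbf{X}$ and $v \notin \mathbf{X}$. I would pick a basis $a_1,\dots,a_k$ of $\mathbf{A}$, extend $a_1,\dots,a_k,v$ to a basis $a_1,\dots,a_k,v,w_1,\dots,w_{n-k-1}$ of $\mathbb{R}^n$, and set $\mathbf{X} = \operatorname{span}\{a_1,\dots,a_k,w_1,\dots,w_{l-k}\}$, which has dimension $l$, contains $\mathbf{A}$, and excludes $v$. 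This construction needs $l-k \le n-k-1$, i.e.\ $l < n$, the tacit non-degeneracy condition of the proposition (when $l=n$ one has $\Omega_{+}(\mathbf{A}) = \{\mathbb{R}^n\}$ for every $\mathbf{A}$ and the claim genuinely fails). The second identity is proved symmetrically: each $\mathbf{Y} \subseteq \mathbf{B}$ forces $\sum_{\mathbf{Y}} \mathbf{Y} \subseteq \mathbf{B}$, while any $v \in \mathbf{B}$ lies in some $k$-plane inside $\mathbf{B}$ (extend $v$ to a basis of $\mathbf{B}$ and span $v$ with $k-1$ of the remaining basis vectors), giving $\mathbf{B} \subseteq \sum_{\mathbf{Y}} \mathbf{Y}$; here one uses $1 \le k \le l$.

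An attractive alternative for the second biconditional is to invoke the orthogonal-complement duality of Proposition~\ref{prop:Omega} rather than repeat the argument: the bijection $\mathbf{X} \mapsto \mathbf{X}^{\perp}$ carries $\Omega_{-}(\mathbf{B})$ onto $\Omega_{+}(\mathbf{B}^{\perp})$, so $\Omega_{-}(\mathbf{B}) = \Omega_{-}(\mathbf{B}')$ forces $\Omega_{+}(\mathbf{B}^{\perp}) = \Omega_{+}((\mathbf{B}')^{\perp})$; applying the already-proved $\Omega_{+}$ case yields $\mathbf{B}^{\perp} = (\mathbf{B}')^{\perp}$ and hence $\mathbf{B} = \mathbf{B}'$. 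This trades the span-recovery computation for a clean appeal to duality.

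I expect the only real obstacle to be the reverse inclusion in the recovery formulas --- namely the explicit construction of a separating $l$-plane (respectively a $k$-plane through a prescribed vector) --- together with correctly isolating the degenerate boundary cases ($l = n$ for $\Omega_{+}$, and $k = 0$ for $\Omega_{-}$) where the intrinsic recovery breaks down and the stated equivalence can itself fail. Everything beyond this is routine linear-algebra bookkeeping.
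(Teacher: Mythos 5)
Your proof is correct and follows essentially the same route as the paper: the paper's entire argument consists of the recovery formula $\mathbf{A} = \bigcap_{\mathbf{X} \in \Omega_{+}(\mathbf{A})} \mathbf{X}$ (which it merely asserts as an observation, with no construction) followed by the orthogonal-complement duality of Proposition~\ref{prop:Omega} to transfer the $\Omega_{+}$ statement to $\Omega_{-}$ --- exactly your ``attractive alternative.'' What you add beyond the paper is threefold: an explicit proof of the recovery formula via the separating $l$-plane construction; a direct sum-recovery argument $\mathbf{B} = \sum_{\mathbf{Y} \in \Omega_{-}(\mathbf{B})} \mathbf{Y}$ that handles the second biconditional without invoking duality; and, most substantively, the observation that the statement genuinely fails in the degenerate case $k < l = n$, where $\Omega_{+}(\mathbf{A}) = \{\mathbb{R}^n\}$ for every $\mathbf{A} \in \Gr(k,n)$. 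The paper neither excludes this case (Definition~\ref{def:Omega} permits $l = n$) nor remarks on it, and its asserted ``observation'' is false there, so your tacit hypothesis $l < n$ (and $k \ge 1$ for the $\Omega_{-}$ side) is a genuine, if minor, correction to the statement as printed.
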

\begin{proof}
Suppose $\Omega_{+}(\mathbf{A})=\Omega_{+}(\mathbf{A'})$. Observe that the intersection of all $l$-planes containing $\mathbf{A}$ is exactly $\mathbf{A}$ and ditto for $\mathbf{A}'$. So
\[
\mathbf{A} = \bigcap\nolimits_{\mathbf{X} \in\Omega_{+}(\mathbf{A})} \mathbf{X} = \bigcap\nolimits_{\mathbf{X} \in\Omega_{+}(\mathbf{A}')} \mathbf{X} = \mathbf{A}'.
\]
The converse is obvious. The statement for $\Omega_{-}$ then follows from Proposition~\ref{prop:Omega}.
\end{proof}
This observation allows us to treat subspaces of different dimensions on the same footing by regarding them as \emph{subsets} in the same Grassmannian. If we have a collection of subspaces of dimensions $k\le k_1 < k_2 < \dots < k_m \le l$, the injective map $\mathbf{A} \mapsto \Omega_{+}(\mathbf{A})$ takes all of them into distinct subsets of $\Gr(l,n)$. Alternatively, the injective map  $\mathbf{B} \mapsto \Omega_{-}(\mathbf{B})$ takes all of them into distinct subsets of $\Gr(k,n)$.

The resemblance between $\Omega_{+}(\mathbf{A})$ and $\Omega_{-}(\mathbf{B})$ in Proposition~\ref{prop:Omega} goes further --- we may view them as `sub-Grassmannians'. 
\begin{proposition}\label{prop:OmegaGrass}
Let $k \le l \le n$ and $\mathbf{A} \in \Gr(k,n)$, $\mathbf{B} \in \Gr(l,n)$. Then
\[
\Omega_{+}(\mathbf{A}) \cong \Gr(l-k,n-k), \quad \Omega_{-}(\mathbf{B}) \cong \Gr(k,l),
\]
isomorphic as algebraic varieties and diffeomorphic as smooth manifolds. Thus
\[
\dim \Omega_{+}(\mathbf{A})=(n-l)(l-k),\quad \dim \Omega_{-}(\mathbf{B}) = k(l-k).
\]
\end{proposition}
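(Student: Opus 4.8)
The plan is to reduce both isomorphisms to a single elementary observation: a linear isomorphism of vector spaces induces an isomorphism of the associated Grassmannians, simultaneously as algebraic varieties and as smooth manifolds, and a surjective linear map induces the analogous map on the Grassmannian of the quotient. Once this is in place, both statements fall out by applying it to two natural linear maps.

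First I would dispatch $\Omega_{-}(\mathbf{B})$, which is essentially tautological. By definition every $\mathbf{Y} \in \Omega_{-}(\mathbf{B})$ is a $k$-plane lying inside the $l$-dimensional vector space $\mathbf{B}$, so $\Omega_{-}(\mathbf{B})$ \emph{is} the Grassmannian of $k$-planes of the vector space $\mathbf{B}$. Fixing any linear isomorphism $\phi : \mathbf{B} \xrightarrow{\sim} \mathbb{R}^l$ (say, sending an orthonormal basis of $\mathbf{B}$ to the standard basis) and mapping $\mathbf{Y} \mapsto \phi(\mathbf{Y})$ gives a bijection onto $\Gr(k,l)$. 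I would verify it is an isomorphism of varieties and a diffeomorphism by writing it in the standard affine charts, where a $k$-plane is the column span of a matrix $\left[\begin{smallmatrix} I_k \\ M \end{smallmatrix}\right]$ and the map acts by a fixed invertible linear substitution on $M$, hence is rational with rational inverse and smooth with smooth inverse; its image is exactly the Schubert variety $\Omega_{-}(\mathbf{B}) \subset \Gr(k,n)$ with its induced structure.

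For $\Omega_{+}(\mathbf{A})$ I would pass to the quotient $\pi : \mathbb{R}^n \twoheadrightarrow \mathbb{R}^n/\mathbf{A} \cong \mathbb{R}^{n-k}$. Since $\mathbf{A} \subseteq \mathbf{X}$ for every $\mathbf{X} \in \Omega_{+}(\mathbf{A})$, the image $\pi(\mathbf{X})$ has dimension $l-k$, so $\mathbf{X} \mapsto \pi(\mathbf{X})$ lands in $\Gr(l-k, n-k)$; its inverse sends an $(l-k)$-plane $\mathbf{W}$ to the $l$-plane $\pi^{-1}(\mathbf{W}) \supseteq \mathbf{A}$. These are mutually inverse bijections, and as before they are regular and smooth in local coordinates because $\pi$ and $\pi^{-1}$ are linear-algebraic operations. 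Alternatively, one may obtain this from $\Omega_{+}(\mathbf{A}) \cong \Omega_{-}(\mathbf{A}^{\perp})$ (Proposition~\ref{prop:Omega}), which by the previous step equals $\Gr(n-l, n-k)$, followed by the complement isomorphism $\Gr(n-l, n-k) \cong \Gr(l-k, n-k)$ already used in the proof of Theorem~\ref{thm1}. The dimension formulas are then immediate from $\dim \Gr(a,b) = a(b-a)$: $\dim \Omega_{-}(\mathbf{B}) = k(l-k)$ and $\dim \Omega_{+}(\mathbf{A}) = (l-k)(n-l)$.

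The main obstacle is not exhibiting the bijections, which are transparent, but confirming that they are morphisms of algebraic varieties (equivalently diffeomorphisms of manifolds), i.e., that $\Omega_{\pm}$ carry the subvariety structure inherited from the ambient Grassmannian and that the identifications respect it. I would discharge this once and for all through the functoriality observation above, checked in the standard affine atlas where an induced map is a fixed invertible linear substitution on the matrix coordinates; the two isomorphisms then follow by specializing to the inclusion $\mathbf{B} \hookrightarrow \mathbb{R}^n$ and the quotient $\mathbb{R}^n \twoheadrightarrow \mathbb{R}^n/\mathbf{A}$, respectively.
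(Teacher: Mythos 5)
Your proposal is correct and follows essentially the same route as the paper: the paper also identifies $\Omega_{-}(\mathbf{B})$ with the Grassmannian $\Gr_k(\mathbf{B}) \cong \Gr(k,l)$ of $k$-planes inside $\mathbf{B}$, and establishes $\Omega_{+}(\mathbf{A}) \cong \Gr(l-k,n-k)$ via the quotient map $\mathbf{X} \mapsto \mathbf{X}/\mathbf{A} \subseteq \mathbb{R}^n/\mathbf{A}$. Your additional chart-level verification that these bijections are regular and smooth, and the alternative derivation of the first isomorphism via $\Omega_{+}(\mathbf{A}) \cong \Omega_{-}(\mathbf{A}^{\perp})$, are details the paper leaves implicit but are consistent with its argument.
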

\begin{proof}
The first isomorphism is the quotient map
$\varphi: \Omega_{+}(\mathbf{A})\to  \Gr_{l-k}(\mathbb{R}^n/\mathbf{A})$, $\mathbf{X} \mapsto \mathbf{X}/\mathbf{A} \subseteq \mathbb{R}^n/\mathbf{A}$,
composed with the isomorphism $ \Gr_{l-k}(\mathbb{R}^n/\mathbf{A}) \cong \Gr(l-k,n-k)$.
The second isomorphism is obtained by regarding a $k$-dimensional subspace $\mathbf{Y}$ of $\mathbb{R}^n$ in $\Omega_{-}(\mathbf{B}) $ as a $k$-dimensional subspace of $\mathbf{B}$, i.e.,
$\Omega_{-}(\mathbf{B}) = \Gr_k(\mathbf{B}) \cong \Gr(k,l)$.
\end{proof}

That $\Omega_{+}(\mathbf{A})$ and $\Omega_{-}(\mathbf{B})$ are Grassmannians allows us to infer the following:
\begin{enumerate}[\upshape (i)]
\item\label{top} as topological spaces, they are compact and path-connected;
\item as algebraic varieties, they are irreducible and nonsingular;
\item\label{diff} as differential manifolds, they are smooth and  any two points on them can be connected by a length-minimizing geodesic.
\end{enumerate}
The topology in \eqref{top} refers to the metric space topology, not Zariski topology. A consequence of compactness is that the distance $d_{\Gr(k,n)}\bigl(\mathbf{A},\Omega_{-}(\mathbf{B})\bigr) = d_{\Gr(l,n)}\bigl(\mathbf{B},\Omega_{+}(\mathbf{A})\bigr)$ can be attained by points in $\Omega_{-}(\mathbf{B})$ and $\Omega_{+}(\mathbf{A})$ respectively. We constructed these closest points explicitly when we proved Theorem~\ref{thm1}.

Many more topological and geometric properties of $\Omega_{+}(\mathbf{A})$ and $\Omega_{-}(\mathbf{B})$ follow from Proposition~\ref{prop:OmegaGrass} as they inherit everything that we know about Grassmannians (e.g.\ coordinate ring, cohomology ring, Pl\"ucker relations, etc.); in particular, $\Omega_{+}(\mathbf{A})$ and $\Omega_{-}(\mathbf{B})$ are also flag varieties.

The last property in \eqref{diff} requires a proof. The length-minimizing  geodesic is not unique and so $\Omega_+(\mathbf{A})$ and $\Omega_-(\mathbf{B})$ are not \emph{geodesically convex} \cite[Definition~4.1.35]{Nicolaescu}.
\begin{proposition}
Any two points in $\Omega_{-}(\mathbf{B})$ (resp.\ $\Omega_{+}(\mathbf{A})$) can be connected by a length-minimizing geodesic in $\Gr(k,n)$ (resp.\ $\Gr(l,n)$).
\end{proposition}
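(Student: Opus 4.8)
The plan is to reduce everything to the identifications of Proposition~\ref{prop:OmegaGrass}, namely $\Omega_{-}(\mathbf{B}) \cong \Gr(k,l)$ and $\Omega_{+}(\mathbf{A}) \cong \Gr(l-k,n-k)$, and to upgrade these from mere diffeomorphisms to \emph{isometric} embeddings into $\Gr(k,n)$ and $\Gr(l,n)$. Granting this, the proposition follows quickly: each of the smaller Grassmannians $\Gr(k,l)$ and $\Gr(l-k,n-k)$ is a compact, hence geodesically complete, Riemannian manifold, so by the Hopf--Rinow theorem any two of its points are joined by a length-minimizing geodesic $\gamma$ lying inside it. Pushing $\gamma$ forward through the isometric embedding yields a curve that lies entirely inside the Schubert variety and whose length equals the ambient geodesic distance between its endpoints, and any such distance-realizing curve is a length-minimizing geodesic of the ambient Grassmannian.

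The first step I would carry out is establishing the two distance identities. For $\Omega_{-}(\mathbf{B})$, given $k$-planes $\mathbf{Y},\mathbf{Y}' \subseteq \mathbf{B}$, their principal angles are the same whether computed inside $\mathbf{B} \cong \mathbb{R}^l$ or inside $\mathbb{R}^n$: choosing an orthonormal basis of $\mathbb{R}^n$ adapted to $\mathbf{B}$ only appends zero rows to the matrix of pairwise inner products of their orthonormal bases, and this leaves the singular values, and hence the principal angles, unchanged. This is exactly the mechanism behind Lemma~\ref{lem:infty}, so by the expression \eqref{eq:grassdist1} for the Grassmann distance we get $d_{\Gr(k,n)}(\mathbf{Y},\mathbf{Y}') = d_{\Gr(k,l)}(\mathbf{Y},\mathbf{Y}')$. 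For $\Omega_{+}(\mathbf{A})$, writing $\mathbb{R}^n = \mathbf{A} \oplus \mathbf{A}^{\perp}$ and $\mathbf{X} = \mathbf{A} \oplus (\mathbf{X} \cap \mathbf{A}^{\perp})$, any two $l$-planes $\mathbf{X},\mathbf{X}'$ containing $\mathbf{A}$ share $\mathbf{A}$, so their first $k$ principal angles vanish while the remaining $l-k$ coincide with the principal angles between $\mathbf{X} \cap \mathbf{A}^{\perp}$ and $\mathbf{X}' \cap \mathbf{A}^{\perp}$ inside $\mathbf{A}^{\perp} \cong \mathbb{R}^{n-k}$; again by \eqref{eq:grassdist1} this gives $d_{\Gr(l,n)}(\mathbf{X},\mathbf{X}') = d_{\Gr(l-k,n-k)}(\mathbf{X}\cap\mathbf{A}^{\perp},\mathbf{X}'\cap\mathbf{A}^{\perp})$, matching the isomorphism of Proposition~\ref{prop:OmegaGrass}.

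With the distance identities in hand, I would apply Hopf--Rinow on $\Gr(k,l)$ (resp.\ $\Gr(l-k,n-k)$) to obtain a minimizing geodesic $\gamma$ between the preimages of the two given points, and then transport it. The crucial point is that the image curve has length equal to $d_{\Gr(k,n)}(\mathbf{Y},\mathbf{Y}')$: lengths are preserved because the embedding is a Riemannian isometry, and this length equals the ambient distance precisely because of the identity proved above. A curve whose length realizes the distance between its endpoints is a length-minimizing geodesic, which is the desired conclusion. Observe that this argument sidesteps the failure of geodesic convexity noted before the proposition: I do not claim uniqueness of the minimizing geodesic, only that \emph{one} such geodesic happens to remain inside the Schubert variety.

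The main obstacle, and the only place where genuine care is needed, is the verification in the first step that the diffeomorphisms of Proposition~\ref{prop:OmegaGrass} are \emph{metric} isometries rather than merely smooth bijections; this is what both preserves curve lengths and forces ``minimizing within the sub-Grassmannian'' to coincide with ``minimizing within the ambient Grassmannian.'' Everything downstream is formal once the distance identities hold, since the statement then reduces to the elementary fact that a distance-realizing curve is a length-minimizing geodesic, together with Hopf--Rinow on a compact manifold.
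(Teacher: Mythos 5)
Your proposal is correct and follows essentially the same route as the paper: both rest on the isometric identification of the Schubert variety with a smaller Grassmannian (Proposition~\ref{prop:OmegaGrass} together with the principal-angle invariance underlying Lemma~\ref{lem:infty}) and the observation that a curve inside the subvariety whose length realizes the ambient distance is a length-minimizing geodesic of the ambient Grassmannian. The only differences are minor: the paper dispatches the $\Omega_{+}(\mathbf{A})$ case via the duality $\Omega_{+}(\mathbf{A}) \cong \Omega_{-}(\mathbf{A}^{\perp})$ of Proposition~\ref{prop:Omega} rather than your direct decomposition $\mathbf{X} = \mathbf{A} \oplus (\mathbf{X}\cap\mathbf{A}^{\perp})$, and it leaves the existence of the minimizing geodesic in the sub-Grassmannian implicit, whereas you justify it explicitly by compactness and Hopf--Rinow.
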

\begin{proof}
By Proposition~\ref{prop:Omega}, it suffices to show that any two points in $\Omega_{-}(\mathbf{B})$ can be connected by a geodesic curve in $\Omega_{-}(\mathbf{B})$. By Proposition~\ref{prop:OmegaGrass}, $\Omega_{-}(\mathbf{B})$ is the image of $\Gr(k,l)$ embedded isometrically in $\Gr(k,n)$. So by Lemma~\ref{lem:infty}, for any $\mathbf{X}_1,\mathbf{X}_2 \in \Gr(k,l)$,
$d_{\Gr(k,n)}(\mathbf{X}_1,\mathbf{X}_2)= d_{\Gr(k,l)}(\mathbf{X}_1,\mathbf{X}_2)
= d_{\Omega_{-}(\mathbf{B})}(\mathbf{X}_1,\mathbf{X}_2)$,
where the last is the geodesic distance in $\Omega_{-}(\mathbf{B})$.
Hence if $d_{\Omega_{-}(\mathbf{B})}(\mathbf{X}_1,\mathbf{X}_2)$ is realized by a geodesic curve $\gamma$ in $\Omega_{-}(\mathbf{B})$, then $\gamma$ must also be a geodesic curve in $\Gr(k,n)$.
\end{proof}

%%%%
%\section{Grassmannians as matrix varieties}\label{sec:matrix}

We have represented $\Gr(k,n)$ as a set of \emph{equivalence classes} of matrices but it may also be represented as a set of \emph{actual matrices} \cite[Example~1.2.20]{Nicolaescu}, namely, idempotent symmetric matrices of trace $k$:
\[
\Gr(k,n)\cong \{ P \in \mathbb{R}^{n \times n} : P^{\mathsf{T}} = P^2 = P, \; \tr(P)=k \}.
\]
The isomorphism maps each subspace $\mathbf{A}\in \Gr(k,n)$ to $P_{\mathbf{A}} \in \mathbb{R}^{n \times n}$,  the unique orthogonal projection onto $\mathbf{A}$, and its inverse takes an orthogonal projection $P$ to the subspace $\im( P) \in \Gr(k,n)$.
$P$ is an orthogonal projection iff it is symmetric and idempotent, i.e., $P^{\mathsf{T}} = P^2 = P$. The eigenvalues of an orthogonal projection onto a subspace of dimension $k$ are $1$'s and $0$'s with multiplicities $k$ and $n-k$, so $\tr(P) = k$ is equivalent to $\Rank(P) = k$, ensuring $\im(P)$ has dimension $k$. In this representation,
\begin{align*}
\Omega_{+}(\mathbf{A}) &\cong \{ P \in \mathbb{R}^{n \times n} : P^{\mathsf{T}} = P^2 = P, \; \tr(P)=l, \; \im(A) \subseteq \im(P) \},\\
\Omega_{-}(\mathbf{B}) &\cong \{ P \in \mathbb{R}^{n \times n} : P^{\mathsf{T}} = P^2 = P, \; \tr(P)=k, \; \im(P) \subseteq \im(B) \},
\end{align*}
allowing us to treat $\Gr(k,n)$, $\Gr(l,n)$, $\Omega_{+}(\mathbf{A})$, $\Omega_{-}(\mathbf{B})$ all as subvarieties of $\mathbb{R}^{n \times n}$.

%%%%
\section{Probability density on the Grassmannian}\label{sec:volume}

We determine the relative volumes of $\Omega_{+}(\mathbf{A})$, $\Omega_{-}(\mathbf{B})$ and prove a volumetric analogue of \eqref{eq:equal} in Theorem~\ref{thm1}:
\begin{quote}
\emph{Given $k$-dimensional subspace $\mathbf{A}$ and $l$-dimensional subspace $\mathbf{B}$ in $\mathbb{R}^n$, the probability that a randomly chosen $l$-dimensional subspace in $\mathbb{R}^n$ contains $\mathbf{A}$ equals the probability that a randomly chosen $k$-dimensional subspace in $\mathbb{R}^n$ is contained in $\mathbf{B}$.}
\end{quote}
%The value of this probability is independent of $\mathbf{A}$ and $\mathbf{B}$ and only depends on $k,l,n$.

Every Riemannian metric on a Riemannian manifold yields a \emph{volume density} that is consistent with the metric \cite[Example~3.4.2]{Nicolaescu}. The Riemannian metric\footnote{Discussed at length in \cite{AMS, EAS}; we did not specify it since we have no use for it except implicitly.} on $\Gr(k,n)$ that gives us the Grassmann distance in \eqref{eq:grassdist1} and the geodesic in \eqref{eq:geodesic} also gives a density $d\gamma_{k,n}$ on $\Gr(k,n)$.  The volume of $\Gr(k,n)$ is then
\begin{equation}\label{eq:volume}
\Vol\bigl(\Gr(k,n)\bigr)=\int_{\Gr(k,n)} |d\gamma_{k,n}| =\binom{n}{k}\frac{\prod_{j=1}^{n}{\omega_j}}{\bigl(\prod_{j=1}^k\omega_j \bigr)\bigl(\prod_{j=1}^{n-k}\omega_j\bigr)},
\end{equation}
where $\omega_m \coloneqq \pi^{m/2}/\Gamma(1+m/2)$, volume of the unit ball in $\mathbb{R}^m$ \cite[Proposition~9.1.12]{Nicolaescu}.

The normalized density $d\mu_{k,n} \coloneqq \Vol\bigl(\Gr(k,n)\bigr)^{-1} \lvert d\gamma_{k,n}\rvert$ defines a natural \emph{uniform probability density} on $\Gr(k,n)$.  With respect to this, the probability of landing on $\Omega_{+}(\mathbf{A})$ in $\Gr(l,n)$ equals the probability of landing on $\Omega_{-}(\mathbf{B})$ in $\Gr(k,n)$.
\begin{corollary}\label{cor:volume}
Let $k \le l \le n$ and $\mathbf{A} \in \Gr(k,n)$, $\mathbf{B} \in \Gr(l,n)$. The relative volumes of $\Omega_{+}(\mathbf{A})$  in $\Gr(l,n)$  and $\Omega_{-}(\mathbf{B})$ in $\Gr(k,n)$ are equal and their common value  depends only on $k,l,n$,
\[
\mu_{l,n}\bigl(\Omega_{+}(\mathbf{A})\bigr) =
\mu_{k,n}\bigl(\Omega_{-}(\mathbf{B})\bigr) =
\frac{l!(n-k)!\prod_{j=l-k+1}^{l}{\omega_j}}{n!(l-k)!\prod_{j=n-k+1}^{n}\omega_j}.
\]
\end{corollary}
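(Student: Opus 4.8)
The plan is to reduce both relative volumes to ratios of volumes of honest Grassmannians and then to evaluate those ratios using the closed form \eqref{eq:volume}. Concretely, I would read the relative volume $\mu_{l,n}\bigl(\Omega_{+}(\mathbf{A})\bigr)$ as $\Vol\bigl(\Omega_{+}(\mathbf{A})\bigr)/\Vol\bigl(\Gr(l,n)\bigr)$, where $\Vol\bigl(\Omega_{+}(\mathbf{A})\bigr)$ is the Riemannian volume of $\Omega_{+}(\mathbf{A})$ with its induced metric, and likewise for $\Omega_{-}(\mathbf{B})$.

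First I would upgrade the diffeomorphisms of Proposition~\ref{prop:OmegaGrass} to isometries, so that the induced volumes transfer. For $\Omega_{-}(\mathbf{B}) \cong \Gr(k,l)$: fixing an orthonormal basis of $\mathbf{B}$ identifies $\mathbf{B}$ with $\mathbb{R}^l$, and for any two $k$-planes $\mathbf{Y}_1,\mathbf{Y}_2\subseteq\mathbf{B}$ the principal angles computed in $\mathbb{R}^n$ coincide with those computed inside $\mathbf{B}\cong\mathbb{R}^l$, since all relevant vectors lie in $\mathbf{B}$. By \eqref{eq:grassdist1} the Grassmann distance is therefore preserved, so the embedding $\Gr(k,l)\hookrightarrow\Gr(k,n)$ with image $\Omega_{-}(\mathbf{B})$ is isometric --- exactly the fact already invoked in the geodesic-connectedness proposition of Section~\ref{sec:schubert}. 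For $\Omega_{+}(\mathbf{A})$ I would pass to orthogonal complements: the map $\mathbf{X}\mapsto\mathbf{X}^{\perp}$ is an isometry $\Gr(l,n)\cong\Gr(n-l,n)$ (as noted in the proof of Theorem~\ref{thm1}) that carries $\Omega_{+}(\mathbf{A})$ to $\Omega_{-}(\mathbf{A}^{\perp})$ by Proposition~\ref{prop:Omega}, and the latter embeds isometrically as $\Gr(n-l,n-k)\cong\Gr(l-k,n-k)$. An isometry preserves the Riemannian volume density, so $\Vol\bigl(\Omega_{-}(\mathbf{B})\bigr)=\Vol\bigl(\Gr(k,l)\bigr)$ and $\Vol\bigl(\Omega_{+}(\mathbf{A})\bigr)=\Vol\bigl(\Gr(l-k,n-k)\bigr)$; in particular both are independent of the chosen $\mathbf{A},\mathbf{B}$ (also evident from the transitivity of the $\O(n)$-action), which is why the common value depends only on $k,l,n$.

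With the problem reduced to Grassmannian volumes, the remainder is a direct computation with \eqref{eq:volume}. I would write
\[
\mu_{k,n}\bigl(\Omega_{-}(\mathbf{B})\bigr) = \frac{\Vol\bigl(\Gr(k,l)\bigr)}{\Vol\bigl(\Gr(k,n)\bigr)}, \qquad \mu_{l,n}\bigl(\Omega_{+}(\mathbf{A})\bigr) = \frac{\Vol\bigl(\Gr(l-k,n-k)\bigr)}{\Vol\bigl(\Gr(l,n)\bigr)},
\]
substitute \eqref{eq:volume} into each, and simplify. In the first ratio the factors $\prod_{j=1}^k\omega_j$ cancel, the binomial prefactors combine to $\binom{l}{k}/\binom{n}{k}=l!(n-k)!/(n!(l-k)!)$, and the $\omega$-products telescope to $\prod_{j=l-k+1}^{l}\omega_j\big/\prod_{j=n-k+1}^{n}\omega_j$, yielding the claimed expression. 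In the second ratio the common factor $\prod_{j=1}^{n-l}\omega_j$ cancels, the binomials give $\binom{n-k}{l-k}/\binom{n}{l}=(n-k)!\,l!/((l-k)!\,n!)$, and the $\omega$-products again collapse to the same quantity, so the two relative volumes agree.

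The computation itself is entirely routine; the only genuine content --- and the step I would handle most carefully --- is the first claim that the abstract isomorphisms of Proposition~\ref{prop:OmegaGrass} are isometric, since volume is a metric invariant while the proposition as stated only records a diffeomorphism. Everything after that is bookkeeping with factorials and the ball-volume constants $\omega_j$.
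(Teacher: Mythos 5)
Your proposal is correct and follows essentially the same route as the paper's proof: identify $\Omega_{-}(\mathbf{B})$ and $\Omega_{+}(\mathbf{A})$ isometrically with $\Gr(k,l)$ and $\Gr(l-k,n-k)$ via Proposition~\ref{prop:OmegaGrass}, apply the volume formula \eqref{eq:volume}, and divide by the volumes of $\Gr(k,n)$ and $\Gr(l,n)$ respectively. The only difference is that you carefully justify why the diffeomorphisms of Proposition~\ref{prop:OmegaGrass} are in fact isometries (via preservation of principal angles and orthogonal complementation), a point the paper's proof asserts without elaboration.
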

\begin{proof}
By Proposition~\ref{prop:OmegaGrass}, $\Omega_{+}(\mathbf{A})$ is isometric to $\Gr(n-l,n-k)$ and $\Omega_{-}(\mathbf{B})$ is isometric to $\Gr(k,l)$, so by \eqref{eq:volume} their volumes are
\[
\binom{n-k}{n-l}\frac{\prod_{j=1}^{n-k}{\omega_j}}{\bigl(\prod_{j=1}^{n-l}\omega_j\bigr)\bigl(\prod_{j=1}^{l-k}\omega_j)},
\qquad
\binom{l}{k}\frac{\prod_{j=1}^{l}{\omega_j}}{\bigl(\prod_{j=1}^k\omega_j\bigr)\bigl(\prod_{j=1}^{l-k}\omega_j)}
\]
respectively.  Now divide by the volumes of $\Gr(l,n)$ and $\Gr(k,n)$ respectively.
\end{proof}
By definition, relative volume depends on the volume of ambient space and the dependence on $n$ is expected, a slight departure from Theorem~\ref{thm1}\eqref{indp}.

%%%%
\section{Conclusions}

We provided what we hope is a thorough study of subspace distances, a topic of wide-ranging interest. We investigated the topic from different angles and filled in the most glaring gap in our existing knowledge --- defining distances and metrics for inequidimensional subspaces. We also developed simple geometric models for subspaces of all dimensions and enriched the existing differential geometric view of Grassmannians with algebraic geometric perspectives. We expect these to be of independent interest to applied and computational mathematicians. Most of the topics discussed in this article have been extended to affine subspaces in \cite{LWY}.

%%%%
\subsection*{Acknowledgment}

We are very grateful to the two anonymous referees for their invaluable suggestions, both mathematical and stylistic. We thank Sayan~Mukherjee for telling us about the importance of measuring distances between inequidimensional subspaces, Frank~Sottile for invaluable  discussions that led to the Schubert variety approach, and Lizhen~Lin, Tom~Luo, Giorgio~Ottaviani for helpful comments.

\bibliographystyle{plain}

\end{document}